\definecolor{DarkBlue}{HTML}{2c7f90}
\definecolor{DarkBrown}{HTML}{88632a}
\definecolor{texMidnight}{HTML}{1D1F21}
\providecommand{\keywords}[1]{\textbf{\textit{Keywords: }} #1}
\date{\today}
\newenvironment{ackno}%
    {\paragraph{Acknowledgements }}
\newcommand{\OT}{\mathrm{OT}}
\newcommand{\R}{\mathbb{R}}
\newcommand{\RR}{\mathbb{R}}
\newcommand{\N}{\mathbb{N}}
\newcommand{\B}{\mathbb{B}}
\newcommand{\GG}{\mathbb{G}}
\newcommand{\XC}{\mathcal{X}}
\newcommand{\YC}{\mathcal{Y}}
\newcommand{\CC}{\mathcal{C}}
\newcommand{\FC}{\mathcal{F}}
\newcommand{\NC}{\mathcal{N}}
\newcommand{\PC}{\mathcal{P}}
\newcommand{\TC}{\mathcal{T}}
\newcommand{\norm}[1]{\left\lVert#1\right\rVert}
\newcommand{\normal}{\mathcal{N}}
\newcommand{\id}{\mathrm{id}}
\newcommand{\Unif}{\mathrm{Unif}}
\newcommand{\dif}{\,\mathrm{d}}
\newcommand{\supp}{\mathrm{supp}}
\newcommand{\EV}[1]{\mathbb{E}\left[#1 \right]}
\newcommand{\Var}{\mathrm{Var}}
\newcommand{\prob}[1]{\mathbb{P}\left(#1\right)}
\newcommand{\interior}[1]{\mathrm{int}\left(#1\right)}
\renewcommand{\DH}{\mathcal{D}^H}
\newcommand{\powerC}{^c}
\newcommand{\ellInf}[1]{\ell^{\infty}\left(#1\right)}
\newcommand{\coloneqq}{:=}
\renewcommand{\phi}{\varphi}
\newcommand{\konvD}{\xrightarrow{\;\;\mathcal{D}\;\;}}
 \newcommand{\konvP}{\xrightarrow{\;\;\mathbb{P}\;\;}}
\renewcommand*{\epsilon}{\varepsilon}
\newcommand{\closure}{{\mathrm{Cl}}}
\newcommand{\BL}[1]{\mathrm{BL}_{1}(#1)}
\DeclareMathOperator*{\argmin}{\mathrm{argmin}}
\theoremstyle{plain}
\newtheorem{theorem}{Theorem}[section]
\newtheorem{corollary}[theorem]{Corollary}
\newtheorem{lemma}[theorem]{Lemma}
\newtheorem{fact}[theorem]{Theorem}
\newcommand{\thissubtheorem}{}
\newcommand{\subtheorem}[1][]{%
  \if\relax\detokenize{#1}\relax
    \def\thissubtheorem{}%
  \else
    \def\thissubtheorem{ (#1)}%
  \fi
  \item
}
\theoremstyle{definition}
\newtheorem{definition}[theorem]{Definition}
\newtheorem{remark}[theorem]{Remark}
\newtheorem{example}[theorem]{Example}
\newcommand{\mylabel}[2]{#2\def\@currentlabel{#2}\label{#1}}
\numberwithin{equation}{section}
\newcommand{\footremember}[2]{
	\footnote{#2}
	\newcounter{#1}
	\setcounter{#1}{\value{footnote}}
}
\newcommand{\footrecall}[1]{
	\footnotemark[\value{#1}]
}
\providecommand{\keywords}[1]{\textbf{\textit{Keywords}}#1}
\providecommand{\keywordsMSC}[1]{\textbf{\textit{MSC 2020 subject classification}} #1}
\begin{document}
\parindent 0pt

\makeatletter
\def\namedlabel#1#2{\begingroup
    #2%
    \def\@currentlabel{#2}%
    \phantomsection\label{#1}\endgroup
}
\makeatother

\author{Shayan Hundrieser
  \hspace{-0.65em}\footremember{ims}{\scriptsize
    Institute for Mathematical
		Stochastics, University of G\"ottingen,
		Goldschmidtstra{\ss}e 7, 37077 G\"ottingen}%
  \hspace{-0.65em}\footremember{mbexc}{\scriptsize
    Cluster of Excellence "Multiscale Bioimaging: from Molecular Machines to Networks of Excitable Cells" (MBExC),
    University Medical Center,
    Robert-Koch-Stra{\ss}e 40, 37075 G\"ottingen}
	\\
  \footnotesize{\href{mailto:s.hundrieser@math.uni-goettingen.de}{s.hundrieser@math.uni-goettingen.de}}
  \\[2ex]
  Marcel Klatt
  \hspace{-0.65em}\footrecall{ims}%
	\\
  \footnotesize{\href{mailto:thomas.staudt@uni-goettingen.de}{mklatt@mathematik.uni-goettingen.de}}
  \\[2ex]
	Thomas Staudt
  \hspace{-0.65em}\footrecall{ims}%
  \hspace{-0.3em}\footrecall{mbexc}
	\\
  \footnotesize{\href{mailto:thomas.staudt@uni-goettingen.de}{thomas.staudt@uni-goettingen.de}}
  \\[2ex]
	Axel Munk
  \hspace{-0.6em}\footrecall{ims}%
  \hspace{-0.3em}\footrecall{mbexc}%
  \hspace{-0.0em}\footnote{\scriptsize
    Max Planck Institute for Biophysical Chemistry,
    Am Fa{\ss}berg 11, 37077 G\"ottingen}
	\\
  \footnotesize{\href{mailto:munk@math.uni-goettingen.de}{munk@math.uni-goettingen.de}}
}

\title{A Unifying Approach to Distributional Limits for\\  Empirical Optimal Transport}
 
\pagenumbering{arabic}

%Nightmode
%\pagecolor{texMidnight}
%\color{white}

\maketitle
\vspace{-0.8cm}
\begin{abstract}
 We provide a unifying approach to \emph{central limit type theorems} for empirical optimal transport (OT).
In general, the limit distributions are characterized as suprema of Gaussian processes. We explicitly characterize when the limit distribution is centered normal or degenerates to a Dirac measure. Moreover, in contrast to recent contributions on distributional limit laws for empirical OT on Euclidean spaces which require centering around its expectation, the distributional limits obtained here are centered around the \emph{population} quantity, which is well-suited for statistical applications. 

 At the heart of our theory is Kantorovich duality representing OT as a supremum over a function class $\FC_{c}$ for an underlying sufficiently regular cost function $c$. In this regard, OT is considered as a functional defined on $\ellInf{\FC_{c}}$ the Banach space of bounded functionals from $\FC_{c}$ to $\mathbb{R}$ and equipped with uniform norm. We prove the OT functional to be \emph{Hadamard directional differentiable} and conclude distributional convergence via a \emph{functional delta method} that necessitates weak convergence of an underlying empirical process in $\ellInf{\FC_{c}}$. The latter can be dealt with \emph{empirical process theory} and requires $\FC_{c}$ to be a \emph{Donsker} class. We give sufficient conditions depending on the dimension of the ground space, the underlying cost function and the probability measures under consideration to guarantee the Donsker property. Overall, our approach reveals a noteworthy trade-off inherent in central limit theorems for empirical OT: Kantorovich duality requires $\FC_{c}$ to be sufficiently rich, while the empirical processes only converges weakly if $\FC_{c}$ is not too complex.
\end{abstract}
\vspace{0.0cm}

\noindent\keywords{Central limit theorem, optimal transport, Wasserstein distance, regularity theory, empirical processes, bootstrap, Kantorovich potential}
\vspace{0.3cm}

\noindent\keywordsMSC{Primary: 60B12, 60F05, 60G15, 62E20, 62F40; Secondary: 90C08, 90C31}
\vspace{0.3cm}

\setlength{\parindent}{15pt}

\section{Introduction}

Comparing probability distributions is a fundamental task in statistics, probability theory, machine learning, data analysis and related fields. 
From this viewpoint, in addition to longstanding mathematical interest, optimal transport (OT) based metrics have recently gained increasing attention for data analysis as well. A major reason is that OT metrics and related similarity measures not only allow comparing general probability distributions, but can also be designed to respect the metric structure of the underlying ground space. This often results in visually appealing and well interpretable outcomes which together with recent computational progress explains the advancement of OT based data analysis throughout various disciplines, ranging from economics \citep{galichon2016optimal} to statistics \citep{panaretos2019statistical}, machine learning \citep{cuturi18}, signal and image processing \citep{bonneel2011displacement,kolouri2017optimal} and biology \citep{schiebinger2019optimal,tameling2021Colocalization}, among others.

In the following, we consider Polish spaces $\XC$ and $\YC$ and denote the set of probability measures thereon by $\PC(\XC)$ and $\PC(\YC)$, respectively. Given some non-negative measurable cost function $c\colon \XC\times \YC\to \mathbb{R}_+$, the OT cost between $\mu\in\PC(\XC)$ and $\nu\in\PC(\YC)$ is defined as 
\begin{equation}\label{eq:OT}
    \OT_c(\mu,\nu)\coloneqq \inf_{\pi\in \Pi(\mu,\nu)} \int_{\XC\times\YC} c(x,y) \dif\pi(x,y),
\end{equation}
where set $\Pi(\mu, \nu)$ denotes the collection of probability measures on $\XC\times\YC$ such that their marginal distributions coincide with $\mu$ and $\nu$, respectively. Any optimizer $\pi\in \Pi(\mu, \nu)$ for \eqref{eq:OT} is termed OT plan. Essentially, OT in \eqref{eq:OT} comprises the challenge to transform the measure $\mu$ into the measure $\nu$ in a cost optimal way. Under mild assumptions on the cost function, $\OT_c$ in \eqref{eq:OT} enjoys for any pair of measures $\mu\in \PC(\XC), \nu\in \PC(\YC)$ the dual formulation 
\begin{equation}\label{eq:OTdual}
    \OT_c(\mu,\nu)=\sup_{f\in \FC_{c}} \int_{\XC} f(x) \dif\mu(x) + \int_{\YC} f^c(y) \dif\nu(y),
\end{equation}
formally known as \emph{Kantorovich duality}. The function class $\FC_c$ depends on the underlying cost function and $f^c(y)=\inf_{x\in\XC} c(x,y)-f(x)$ is the \emph{$c$-conjugate} of $f\in \FC_{c}$. Any function $f$ attaining the supremum in \eqref{eq:OTdual} is termed \emph{Kantorovich potential} and the set 
\begin{equation}\label{eq:KantorovichPotentials}
	S_c(\mu,\nu)\coloneqq \left\lbrace f\in\FC_{c}\, \mid\, \OT_c(\mu,\nu)=\int_{\XC} f(x) \dif\mu(x) + \int_{\YC} f^c(y) \dif\nu(y)\right\rbrace
\end{equation}
denotes the collection of all Kantorovich potentials. We refer to the monographs by \cite{rachev1998massTheory,rachev1998massApplications}, \cite{vil03, villani2008optimal}, \cite{santambrogio2015optimal} for comprehensive treatment, and to \Cref{sec:GeneralOT} for further details. In a statistical application the probability measures $\mu$ and $\nu$ are estimated from data. For the moment we assume $\nu$ to be known and that we have access to realizations of independent and identically distributed (i.i.d.) random variables $X_1,\ldots,X_n\sim\mu$. The corresponding \emph{empirical measure} $\hat{\mu}_n=\frac{1}{n}\sum_{i=1}^n \delta_{X_i}$ thus serves as a proxy for $\mu$. The \emph{population quantity} $\OT_c(\mu,\nu)$ is then estimated by the empirical OT cost $\OT_c(\hat{\mu}_n,\nu)$, posing questions about its statistical performance.

To this end, distributional limits for the (properly standardized) empirical OT cost are 
fundamental as they capture the asymptotic fluctuation around their population quantities. In the following, we denote such results as \emph{central limits theorems} (CLTs).\footnote{
Historically, the term CLT meant to describe the asymptotic distribution of a \emph{sum} of random variables \citep{leCam86}. Indeed, the empirical OT cost is the sum of costs between random points weighted according to an OT plan.} 
Available results in the literature can be broadly distinguished between the case that $\mu = \nu$ (the null hypothesis in the context of statistical testing) and $\mu \neq \nu$ (corresponding to the alternative when testing $\mu = \nu$). 
A well studied setting in this regard is the $p$-th order \emph{Wasserstein distance}\footnote{To alleviate notation, we write $\OT_p(\mu,\nu)$ for probability measures $\mu,\nu\in\PC(\R^d)$ supported on a Euclidean space and cost function equal to $c(x,y)=\Vert x-y\Vert^p$ for some $p\geq 1$. The Wasserstein distance is then equal to $\smash{\OT_p^{\tiny \nicefrac{1}{p}}(\mu,\nu)}$ and commonly denoted by $W_p(\mu,\nu)$. Analogously, the set of Kantorovich potentials in \eqref{eq:KantorovichPotentials} is denoted by $S_p(\mu,\nu)$ in this case.} $\smash{\OT_p^{1/p}(\mu,\nu)}$ on $\R^d$ that arises by choosing the cost $c(x,y)\coloneqq\Vert x-y\Vert^p$ and probability measures with finite $p$-th moments in \eqref{eq:OT}. 
First analyses have been devoted to the real line $(d=1)$ for which the $p$-th order Wasserstein distance is equal the $L^p$ distance between the quantile functions of the measures. Under the null $\mu = \nu$ and $p = 1$, early contributions by \cite{del1999central} (see also \citealt{mason2016weighted}) provide necessary and sufficient conditions on the probability measures such that the random quantity $\sqrt{n}\OT_1(\hat{\mu}_n,\mu)$ weakly converges towards an integral of a suitable Brownian bridge. A weak limit for $p=2$ is obtained by \cite{del2005asymptotics}. The regime $p \in (1,2)$ was analyzed only recently by \cite{berthet2019weak}. For $p > 2$, CLTs are not immediately available but the work by \cite{bobkov2019one} indicates that similar results can be obtained from general quantile process theory \citep{csorgo1993weighted}. Under the alternative $\mu \neq \nu$ for $p \geq 1$, the random quantity $\sqrt{n}(\OT_p(\hat \mu_n, \nu) - \OT_p(\mu, \nu))$ often asymptotically follows a centered Gaussian distribution \citep{Munk98,del2019central,berthet2019weak,berthet2020central}. Similar in spirit are recent contributions by \cite{Hundrieser2021_Circular} for measures supported on the circle.

While the previous works all benefit from the representation of the OT plan as a quantile coupling when the ground space is totally ordered, the general situation is much more complicated. 
A unifying analysis for discrete metric spaces $\XC=\{x_1,x_2,\ldots\}$ and a metric based cost function $c(x_i,x_j)=d^p(x_i,x_j)$ for $p \geq 1$ is given by \cite{sommerfeld2018} and \cite{tameling18}, which could be seen as a starting point for this paper. For arbitrary measures $\mu,\nu \in \PC(\XC)$ the limiting random variable is specifically characterized applying the functional delta method in terms of a supremum over (infinite dimensional) Gaussian random vectors $\mathbb{G}_\mu \sim \normal(0, \Sigma(\mu))$, namely
\begin{equation}\label{eq:CLTdiscretecase}
    \sqrt{n}\left( \OT_{c}(\hat{\mu}_n,\nu)- \OT_{c}(\mu,\nu)\right) \konvD \sup_{f\in S_c(\mu,\nu)} \left\langle \mathbb{G}_\mu,f\right\rangle,
\end{equation}
where the supremum is taken over $S_c(\mu,\nu)$, the set of optimal Kantorovich potentials in \eqref{eq:KantorovichPotentials}. Parallel and independently to our work, \cite{del2022central} extended this to semi-discrete OT for which \eqref{eq:CLTdiscretecase} remains valid even if the discrete measure $\nu$ is replaced by a general probability measure supported on some Polish space, e.g., absolutely continuous with respect to Lebesgue measure on $\R^d$ (see \Cref{sec:LLspecificinstance}). 

Beyond the countable, one-dimensional and semi-discrete case, the asymptotic distributional behavior for empirical OT becomes much more involved. Already for $d = 2$, precise CLTs remain elusive as highlighted by \cite{ajtai1984optimal} (see also \citealt{talagrand1994matching,bobkov2019simple}) who showed for the uniform distribution $\mu$ on the unit square that $\OT_1(\hat{\mu}_n,\mu)\asymp (\log(n)/n)^{1/2}$ with high probability.
For higher dimensions $d\geq 3$, it is well known that any absolutely continuous measure $\mu$ with compact support on $\mathbb{R}^d$ fulfills $\OT_1(\hat{\mu}_n,\mu)\asymp n^{-1/d}$ \citep{dudley1969, dobric1995asymptotics} which implies $\sqrt{n}\OT_1(\hat{\mu}_n,\mu)$ to diverge. 
Indeed, for general $p \geq 1$ the literature on the convergence of empirical OT is vast and we therefore only give a selective view on recent papers biased towards our main results. Overall, slow convergence rates in the high-dimensional regime seem inevitable as the  Wasserstein distances of any order $p\geq 1$ suffers from the \emph{curse of dimensionality} $\mathbb{E}[\OT_p(\hat{\mu}_n,\mu)]\asymp n^{-p/d}$ whenever $d> 2p$ and $\mu$ is Lebesgue absolutely continuous \citep{boissard2014, fournier2015rate, weed2019sharp}. This demonstrates the scaling rate $\sqrt{n}$ in \eqref{eq:CLTdiscretecase} to be of wrong order for high dimensional (Euclidean) spaces. However, when centering with the \emph{empirical expectation}, concentration results demonstrate the random quantity $\sqrt{n}\,\left(\OT_2(\hat{\mu}_n,\nu)- \mathbb{E}[\OT_2(\hat{\mu}_n,\nu)]\right)$ to be tight \citep{weed2019sharp,Chizat2020}. A fundamental step further has been taken by \cite{delbarrio2019} who obtain for probability measures with $4+\delta$ finite moments ($\delta>0$) and a positive density in the interior of their convex support that 
\begin{equation}\label{eq:delbarrioCLT}
    \sqrt{n}\left(\OT_2(\hat{\mu}_n,\nu)- \mathbb{E}[\OT_2(\hat{\mu}_n,\nu)]\right) \konvD Z\sim \NC\big(0,\mathrm{Var}_{X\sim\mu}[f(X)]\big).
\end{equation}
Here and in the following, $\NC(\mu, \sigma^2)$ denotes the normal distribution with mean $\mu$ and variance $\sigma^2$. 
The asymptotic variance $\mathrm{Var}_{X\sim \mu}[f(X)]$ in \eqref{eq:delbarrioCLT} is equal to the variance of the random variable $f(X)$ with $X\sim\mu$ and $f$ the \emph{unique} Kantorovich potential for \eqref{eq:OTdual}. Under the null $\mu=\nu$, the asymptotic variance is equal to zero and the CLT in \eqref{eq:delbarrioCLT} degenerates,  in contrast to the alternative $\mu \neq \nu$ that usually leads to a non-degenerate normal limit law. The approach by \cite{delbarrio2019} relies on approximating the empirical OT cost via \eqref{eq:OTdual} as a linear functional involving a unique Kantorovich potential for which a CLT immediately follows. Based on the Efron-Stein variance inequality this functional is shown to serve as a good $L^2$-approximizer for the random quantity $\sqrt{n}\,\left(\OT_2(\hat{\mu}_n,\nu)- \mathbb{E}[\OT_2(\hat{\mu}_n,\nu)]\right)$ which yields the conclusion. These results have recently been extended by \cite{delBarrio2021GeneralCosts} to more general convex cost functions, including a CLT similar to \eqref{eq:delbarrioCLT} for $\OT_p$ under $p >1$ provided the probability measures have moments of order $2p$. Notably, the regularity conditions on the measures impose the Kantorovich potential to be unique and it remains unclear how to generalize their approach under non-uniqueness. More crucially, the statement requires centering around the empirical expectation which hinders its immediate use for statistical applications. It might be tempting to replace $\mathbb{E}[\OT_2(\hat{\mu}_n,\nu)]$ by its population counterpart $\OT_2(\mu,\nu)$ in \eqref{eq:delbarrioCLT}. On the real line $(d=1)$ and under sufficient regularity assumptions, this is possible \citep{del2019central}. However, it remains a delicate issue for $d\geq 2$. By an observation of \citet[Proof of Proposition 21]{manole2021sharp}, if $\mu$ and $\nu$ are uniform measures on two different balls of equal radius the bias is lower bounded by $\mathbb{E}[\OT_2(\hat{\mu}_n,\nu)]-\OT_2(\mu,\nu)\gtrsim n^{-2/d}$. This demonstrates the replacement of the centering with the population quantity in \eqref{eq:delbarrioCLT} to be invalid for $d\geq 5$ (the special case $d=4$ is further addressed in \Cref{ex:Dgtr4}).  
  Nevertheless, employing a different estimator may allow under additional assumptions for faster convergence rates of the bias in high dimensions. Indeed, for probability measures $\mu$,~$\nu$ on the unit cube $[0,1]^d$ with sufficiently smooth densities \cite{Manole2021_Plugin} propose a suitable wavelet estimator $\tilde \mu_n$ and prove the bias to be of order $|\EV{\OT_2(\tilde\mu_n, \nu)} - \OT_2(\mu, \nu)|=o(n^{-1/2})$. Combined with a strategy as outlined by \cite{delbarrio2019}, the random quantity $\sqrt{n}(\OT_2(\tilde\mu_n, \nu) - \OT_2(\mu, \nu))$ is shown to asymptotically follow a centered Gaussian distribution analogous to \eqref{eq:delbarrioCLT}, which also degenerates under the null $\mu=\nu$. Despite this being an interesting result, CLTs for empirical OT costs based on general cost functions and centered around the population quantity remain largely open. 

In this work, we provide a unifying approach to obtain CLTs for empirical OT that include certain aforementioned settings but go far beyond. In particular, our approach does not rely on discrete spaces, neither on explicit formulas involving quantiles $(d=1)$ nor unique Kantorovich potentials $(d\geq 2)$. Our limit laws are reminiscent of the discrete case \eqref{eq:CLTdiscretecase} but hold for considerably more general cost functions and probability measures. The limiting random variables are characterized as suprema of Gaussian processes $\mathbb{G}_\mu$ in $\ellInf{\FC_{c}}$ and indexed in Kantorovich potentials from $S_c(\mu,\nu)$ in \eqref{eq:KantorovichPotentials}. The CLTs are centered around the \emph{population quantity}. Our main result (\Cref{thm:GeneralCLT}) states under certain assumptions that 
\begin{equation}\label{eq:introresultOTmu}
    \sqrt{n}\left( \OT_c(\hat \mu_n,\nu) - \OT_c(\mu,\nu) \right) \konvD  \sup_{ f\in S_c(\mu, \nu)} \mathbb{G}_\mu(f).
\end{equation}
The distributional limit law \eqref{eq:introresultOTmu} is valid whenever $\FC_c$ in \eqref{eq:OTFunctionclassLarge} (below) is $\mu$-Donsker and  
\begin{enumerate}[label=\textbf{(C)}]
    \item \label{ass:CostsContinuousAndBounded} The cost $c\colon \XC\times \YC \rightarrow \R_+$ is continuous and bounded by $\norm{c}_\infty\coloneqq\sup_{x,y}\vert c(x,y) \vert<\infty$.
\end{enumerate}
holds true combined with one of the following two assumptions:
\begin{enumerate}[label=\textbf{(S\arabic*)}]
    \item\label{ass:EquicontinuityLocallyCompact} The spaces $\XC$ and $\YC$ are locally compact and $\{c(\cdot,y)\mid y\in \YC\}$ and $\{c(x,\cdot)\mid x\in \XC\}$ are equicontinuous\footref{fn:equicontinuity} on $\XC$ and $\YC$, respectively.
    \item\label{ass:EquicontinuityCompact} The space $\XC$ is compact and $\{c(\cdot,y)\mid y\in \YC\}$ is equicontinuous\footnote{\label{fn:equicontinuity} Equicontinuity refers to a common modulus of continuity for the function classes $\big\{ c(\cdot, y)\,|\,y\in\YC\big\}$ and $\big\{c(x, \cdot)\,|\,x\in\XC\big\}$ with respect to a continuous metric on $\XC$ and $\YC$.} on $\XC$.
\end{enumerate}
In particular, under Assumption \ref{ass:CostsContinuousAndBounded} the function class $\FC_c$ used in \eqref{eq:OTdual} can be chosen as a uniformly bounded class of $c$-concave functions on $\XC$ \cite[Remark 1.13]{vil03}
\begin{align}\label{eq:OTFunctionclassLarge}
    \FC_{c}\coloneqq\left\lbrace f\colon \XC\to \mathbb{R} \, \mid\, \exists\, g\colon\YC\to\mathbb{R},\, -\lVert c\rVert_\infty \leq g \leq 0,\, f=\inf_{y\in\YC} c(\cdot,y)-g(y)\right\rbrace.
\end{align}
Moreover, $\mathbb{G}_\mu$ in \eqref{eq:introresultOTmu} is a tight centered Gaussian process and represents the weak limit of the empirical process $\sqrt{n}(\hat{\mu}_n-\mu)$ in $\ellInf{\FC_c}$, the Banach space of bounded functionals from $\FC_c$ to $\mathbb{R}$ equipped with uniform norm. The covariance of $\mathbb{G}_\mu$ specifically depends on $\mu$ and is equal to
\begin{equation}\label{eq:introcovariance}
    \mathbb{E}\left[ \mathbb{G}_\mu(f_1)\mathbb{G}_\mu(f_2)\right] = \int_{\XC} f_1(x) f_2(x) \dif \mu(x) -\int_{\XC} f_1(x)\dif \mu(x) \int_{\XC} f_2(x) \dif\mu(x)
\end{equation}
for two functions $f_1,f_2\in\FC_c$. If instead $\nu$ is estimated by its empirical version, then under the same conditions on the cost and the spaces, i.e.,  Assumption \ref{ass:CostsContinuousAndBounded} combined with  \ref{ass:EquicontinuityLocallyCompact} or \ref{ass:EquicontinuityCompact}, and if $\FC_c^c$, the set of all $c$-conjugate functions from $\FC_c$, is $\nu$-Donsker, our CLT reads as  
\begin{equation}\label{eq:introresultOTnu}
    \sqrt{n}\left( \OT_c(\mu,\hat{\nu}_n) - \OT_c(\mu,\nu) \right) \konvD  \sup_{ f\in S_c(\mu, \nu)} \mathbb{G}_\nu(f^c),
\end{equation}
where $\mathbb{G}_\nu$ is a centered Gaussian process in the Banach space $\ellInf{\FC^c_c}$ with similar covariance as in \eqref{eq:introcovariance} corresponding to the weak limit of the empirical process $\sqrt{n}(\hat{\nu}_n-\nu)$.

Our proof technique relies on Kantorovich duality that represents the $\OT_c(\cdot,\cdot)$ cost as a functional from $\ellInf{\FC_c}\times \ellInf{\FC_c\powerC}$ to $\RR$. We take upon this approach in \Cref{sec:GeneralOT} and prove that $\OT_c(\cdot,\cdot)$ is Hadamard directionally differentiable. An application of a general functional delta method \citep{dumbgen1993nondifferentiable,Roemisch04} allows to conclude our main results on CLTs for empirical OT. This necessitates the empirical process $\sqrt{n}(\hat{\mu}_n-\mu)$ to converge weakly to some tight random element $\mathbb{G}_\mu$ in $\ellInf{\FC_c}$. The latter can be dealt with \emph{empirical process theory} and requires the function class $\FC_c$ to be $\mu$-\emph{Donsker}. Our results naturally extend to the two sample case where both probability measures are estimated by their empirical versions, simultaneously. We further characterize asymptotic \emph{normality}, i.e., when the supremum in \eqref{eq:introresultOTmu} and \eqref{eq:introresultOTnu} is over a singleton (\Cref{thm:NormalLimits} in \Cref{sec:NormalLimitLaws}) and \emph{degeneracy} (\Cref{thm:DegenerateOTLimits} in \Cref{sec:DegenerateLimitLaws}), i.e., when the limit law is equal to a Dirac measure which results from unique and trivial (almost surely constant) Kantorovich potentials, respectively. We emphasize that even if $\mu=\nu$, the CLTs might be non-degenerate, e.g., if $\mu$ has disconnected support, as for the discrete case in \eqref{eq:CLTdiscretecase}, where limit laws usually do not degenerate to a Dirac at zero \citep{tameling18}.

In light of the general CLT statement in \eqref{eq:introresultOTmu}, we discuss concrete settings for which the assumptions on the cost are satisfied and $\FC_c$ is $\mu$-Donsker (\Cref{sec:LLspecificinstance}). The latter manifests itself in the complexity of $\FC_c$ measured in terms of covering numbers \citep{van1996weak} as well as tail conditions on the measure $\mu$. The covering numbers depend on properties of the cost function and the underlying dimension of the ground space (see also \citealt{gangbo1996geometry,Chizat2020,hundrieser2021convergence}). The simplest case appears on finite spaces where $\FC_c$ is even \emph{universal} Donsker. On countable discrete spaces it requires the popular \emph{Borisov-Dudley-Durst} summability condition on the measure $\mu$. This is in line with the aforementioned results by \cite{sommerfeld2018} and \cite{tameling18} (\Cref{cor:countableCLTs}). Beyond discrete spaces, we employ well-known results in empirical process theory. More precisely and tailored to Euclidean spaces $\R^d$ with $d\leq 3$, if the cost satisfies certain regularity conditions, then $\FC_c$ is $\mu$-Donsker provided the measure $\mu\in\PC(\R^d)$ satisfies 
\begin{align*}
     \sum_{k \in \mathbb{Z}^d} \sqrt{\mu\left([k,k+1)\right)} < \infty.
\end{align*}
This implies CLTs for the empirical OT cost on the real line with general cost functions (\Cref{cor:CLT_RealLine}) but also novel statements for dimension $d=2,3$ (\Cref{cor:CLT_EuclSpace}). Moreover, in \Cref{ex:Dgtr4} we show for $\mu = \Unif([0,1]^d)$ that the $\mu$-Donsker property of $\FC_c$ is already violated under smooth costs $c(x,y) = \norm{x-y}^2$ for $d \geq 4$ and that CLTs as in \eqref{eq:introresultOTmu} cannot hold for $d\geq 5$ (for $d = 4$ such CLTs are still possible though and we give an example). In view of this observation, our approach is exhaustive in terms of the dimension. Nevertheless, provided only one of the probability measures is supported on low-dimensional compact smooth submanifold of $\RR^d$ we highlight that the Donsker property is still fulfilled and that the CLTs in \eqref{eq:introresultOTmu} remain valid  (\Cref{cor:CLT_LCA}). This is related to recent findings by \cite{hundrieser2021convergence} discovering the \emph{lower complexity adaptation} phenomenon of empirical OT which states that the convergence rate of the empirical OT cost under different population measures adapts to the measure with lower-dimensional support. Moreover, based on this observation, we also derive a CLT for the empirical OT cost in the semi-discrete framework (\Cref{thm:Semidiscrete}).
 We postpone further technical proofs and auxiliary results on Kantorovich potentials to \Cref{sec:Proofs}.

\paragraph*{Notation}
The set of non-negative real numbers is $\R_+$. For $a,b\in\mathbb{R}$ the inequality $a\lesssim_{\kappa} b$ means that $a$ is larger than $b$ up to a constant depending on $\kappa$. In certain instances we omit the $\kappa$.  If  $a \lesssim b \lesssim a$, we write $a \asymp b$. 
The class of real-valued continuous functions defined on a metric space $\XC$ is denoted by $C(\XC)$. A real-valued function $f$ defined on some convex subset $A\subset \XC$ is $\lambda$-semi-concave if there exists some constant $\lambda>0$ such that $f(x)-\lambda\Vert x\Vert^2_2$ is concave. 
Furthermore, $f$ is said to be $(\alpha,L)$-H\"older continuous if there exist positive constants $\alpha\in(0,1]$ and $L>0$ such that $\vert f(x)-f(x^\prime)\vert \leq L\Vert x-x^\prime \Vert^\alpha$ for all $x,x^\prime$ in the domain of $f$. If $\alpha=1$, then $f$ is $L$-Lipschitz. For a function class $\FC$ on $\XC$ denote by $\norm{f-g}_\infty$ the uniform norm $\norm{f}_\infty=\sup_{x\in\XC}\vert f(x) \vert$. Let $\ellInf\FC$ be the Banach space of real-valued bounded functionals on $\FC$ with respect to uniform norm $\Vert \phi \Vert_\FC\coloneqq \sup_{f\in\FC}\vert \phi(f)\vert$. For $(\mathcal{F},d)$ a subset of some metric space with metric $d$ and $\epsilon>0$, the covering number $\NC(\epsilon,\FC,d)$ is the minimal number of balls $\left\lbrace g\mid\, d(f,g)<\epsilon \right\rbrace$ of radius $\epsilon$ such that their union contains $\FC$. The metric entropy of $\FC$ is the logarithm of the covering number $\log\big(\NC(\epsilon,\FC,d)\big)$.

\section{Central Limit Theorems for Empirical Optimal Transport}\label{sec:GeneralOT}

Throughout this section, we consider Polish spaces $\XC$ and $\YC$ and a non-negative cost function $c\colon \XC\times \YC\rightarrow \R_+$ such that Assumption \ref{ass:CostsContinuousAndBounded} is fulfilled. Under this assumption the OT cost in \eqref{eq:OT} is finite for any two probability measures $\mu\in \PC(\XC)$, $\nu \in \PC(\YC)$ and enjoys Kantorovich duality \eqref{eq:OTdual}. The set $\FC_{c}$, over which the dual is optimized, is the collection of uniformly bounded \emph{$c$-concave} functions on $\XC$ defined as in \eqref{eq:OTFunctionclassLarge} (for details see \citealt[Remark 1.13]{vil03} and the \nameref{sec:Proofs}). The supremum in \eqref{eq:OTdual} over $\FC_{c}$ is attained, i.e., there exists a Kantorovich potential $f\in\FC_{c}$, where we recall the set $S_c(\mu,\nu)$ in \eqref{eq:KantorovichPotentials} of all Kantorovich potentials. Notably, as the cost function is continuous, any function $f\in \FC_{c}$ and its $c$-conjugate $f\powerC$ are upper semi-continuous and thus measurable on the Polish spaces $\XC$ and $\YC$, respectively.\\
More general structural properties for $c$-concave functions $\FC_c$ and its $c$-conjugate class $\FC_c\powerC \coloneqq \{f\powerC \mid f \in \FC_c\}$ are intrinsically linked to the cost function and the underlying Polish space. Hence, to guarantee (minimal) regularity properties of the set of Kantorovich potentials we impose Assumption \ref{ass:EquicontinuityLocallyCompact} or \ref{ass:EquicontinuityCompact}. Under either of these conditions, the Kantorovich potentials $S_c(\mu, \nu)$ and $S_c^c(\mu, \nu)$ are continuous as well (\Cref{lem:ContinuityOfKantPotentials}). Moreover, under compactness or local compactness of the spaces $\XC$ and $\YC$ as well as the equicontinuity condition of the cost function, the classes $\FC_c$ and $\FC_c^c$ are by the \emph{Arzel\`a-Ascoli} theorem relatively compact with respect to uniform convergence on compact sets (see proof of \Cref{thm:GeneralCLT} and in particular Step 3).

Examples of locally compact spaces are Euclidean spaces but also encompass finite and countable spaces equipped with discrete topology. Assumptions \ref{ass:CostsContinuousAndBounded} and \ref{ass:EquicontinuityLocallyCompact} are then fulfilled for the Euclidean case $\XC = \YC = \R^d$ if $c(x,y) = h(x-y)$ for some bounded Lipschitz function $h\colon \R^d\rightarrow \R_+$ and for discrete spaces $\XC, \YC$ if the cost function is uniformly bounded. Furthermore and for compact subsets $\XC,\YC\subset \R^d$, Assumptions \ref{ass:CostsContinuousAndBounded} and \ref{ass:EquicontinuityCompact} are fulfilled for cost $c(x,y) = h(x-y)$ for some (possibly unbounded) Lipschitz function $h\colon  \R^d\rightarrow \R_+$. In particular, this encompasses costs of the form $c(x,y)= \norm{x-y}^p$ with $p \geq 1$ inherent in the popular Wasserstein distance. More examples in this regard are detailed in \Cref{sec:LLspecificinstance}.

\subsection{Main Result}

In the following, the empirical process $\sqrt{n}(\hat \mu_n - \mu)$ is considered as a random element in the Banach space $\ellInf{\FC_{c}}$ equipped with uniform norm. 

\begin{definition}[\citealt{van1996weak}] \label{def:Donsker}
    A function class $\FC$ of measurable, pointwise bounded functions on $\XC$ is called \emph{$\mu$-Donsker}, if the empirical process $\sqrt{n}(\hat \mu_n - \mu)$ weakly converges to a tight random element $\mathbb{G}_\mu$ in $\ell^\infty(\FC)$. Furthermore, $\FC$ is \emph{universal Donsker} if for any probability measure $\mu \in \PC(\XC)$ the function class $\FC$ is $\mu$-Donsker. 
\end{definition}

If $\FC_{c}$ is $\mu$-Donsker, the weak limit $\mathbb{G}_\mu$ is a mean-zero Gaussian process indexed over the function class $\FC_{c}$ with covariance for $f_1,f_2\in\FC_{c}$ as in \eqref{eq:introcovariance}. We now state our main result on the CLTs for the empirical OT cost.

\begin{theorem}\label{thm:GeneralCLT}
Consider Polish spaces $\XC$ and $\YC$ and a cost function $c\colon \XC\times \YC\rightarrow \R_+$ satisfying  Assumption \ref{ass:CostsContinuousAndBounded} combined with  \ref{ass:EquicontinuityLocallyCompact} or \ref{ass:EquicontinuityCompact}. For two probability measures $\mu\in \PC(\XC)$, $\nu\in \PC(\YC)$, i.i.d. random variables $X_1, \dots, X_n\sim \mu$ and independent to that i.i.d. random variables $Y_1, \dots, Y_m\sim \nu$ with $n, m\in \N$ denote their respective empirical measures by $\hat \mu_n \coloneqq \frac{1}{n}\sum_{i=1}^n \delta_{X_i}$ and $\hat \nu_m \coloneqq \frac{1}{m}\sum_{i=1}^m \delta_{Y_i}$. 
	\begin{subequations}\label{eq:GeneralCLT}
	\begin{enumerate}
	\item[$(i)$] (One-sample from $\mu$) Suppose that $\FC_{c}$ is $\mu$-Donsker. Then, for $n\rightarrow \infty$,
	    \begin{equation}\label{eq:GeneralCLT_OneSampleMu}
            \sqrt{n}\left(\OT_c(\hat \mu_n, \nu) - \OT_c(\mu, \nu)\right) \konvD \sup_{f\in S_c(\mu, \nu)} \mathbb{G}_\mu(f).
        \end{equation}
\item[$(ii)$] (One-sample from $\nu$) Suppose that $\FC_{c}\powerC$ is $\nu$-Donsker. Then, for $m\rightarrow \infty$,
        \begin{equation}\label{eq:GeneralCLT_OneSampleNu}
          \sqrt{m}\left(\OT_c(\mu, \hat\nu_m) - \OT_c(\mu, \nu)\right) \konvD \sup_{f\in S_c(\mu, \nu)} \mathbb{G}_\nu(f\powerC).
        \end{equation}
	\item[$(iii)$] (Two-sample) Suppose that $\FC_{c}$ is $\mu$-Donsker and that $\FC_{c}\powerC$ is $\nu$-Donsker. Then, for $n,m \rightarrow \infty$ with $m/(n+m)\rightarrow \delta\in (0,1)$,
	\begin{multline}
	 	\label{eq:GeneralCLT_TwoSample}
         \qquad  \sqrt{\frac{nm}{n+m}}\left(\OT_c(\hat \mu_n, \hat\nu_m) - \OT_c(\mu, \nu)\right)  \\
           \konvD\sup_{f\in S_c(\mu, \nu)} \left(\sqrt{\delta}\mathbb{G}_\mu(f) + \sqrt{1-\delta}\mathbb{G}_\nu(f\powerC)\right).\qquad
	\end{multline}
	\end{enumerate}
	\end{subequations}
\end{theorem}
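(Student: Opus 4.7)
My strategy is to apply the functional delta method for Hadamard directionally differentiable maps \citep{dumbgen1993nondifferentiable,Roemisch04} to the Kantorovich dual representation of $\OT_c$.

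First I recast the problem functionally. Define
\begin{equation*}
\Phi\colon \ellInf{\FC_c}\times \ellInf{\FC_c\powerC}\to \R, \qquad \Phi(\alpha,\beta)\coloneqq \sup_{f\in\FC_c}\big[\alpha(f)+\beta(f\powerC)\big],
\end{equation*}
and identify every probability measure with the integration functional it induces on the relevant class. By Kantorovich duality \eqref{eq:OTdual} this gives $\Phi(\mu,\nu)=\OT_c(\mu,\nu)$, and analogously $\Phi(\hat\mu_n,\nu)=\OT_c(\hat\mu_n,\nu)$, $\Phi(\mu,\hat\nu_m)=\OT_c(\mu,\hat\nu_m)$, $\Phi(\hat\mu_n,\hat\nu_m)=\OT_c(\hat\mu_n,\hat\nu_m)$.

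Second, I would establish that $\Phi$ is Hadamard directionally differentiable at $(\mu,\nu)$ tangentially to the support of the pertinent Gaussian limit, with derivative
\begin{equation*}
\Phi'_{(\mu,\nu)}(h_1,h_2)=\sup_{f\in S_c(\mu,\nu)}\big[h_1(f)+h_2(f\powerC)\big].
\end{equation*}
The $\liminf$-direction is routine: for any fixed $f^*\in S_c(\mu,\nu)$, inserting $f^*$ as a (generally non-optimal) candidate in $\Phi(\mu+t_n h_1^{(n)},\nu+t_n h_2^{(n)})$, subtracting $\Phi(\mu,\nu)=\mu(f^*)+\nu((f^*)\powerC)$, dividing by $t_n$, and invoking $(h_1^{(n)},h_2^{(n)})\to (h_1,h_2)$ in uniform norm yields $h_1(f^*)+h_2((f^*)\powerC)$; supremizing over $f^*\in S_c(\mu,\nu)$ completes this side. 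The matching $\limsup$ bound is more delicate: pick $\varepsilon_n$-maximizers $f_n\in\FC_c$ for each perturbation. By Assumption \ref{ass:CostsContinuousAndBounded} combined with \ref{ass:EquicontinuityLocallyCompact} or \ref{ass:EquicontinuityCompact}, both $\FC_c$ and $\FC_c\powerC$ are uniformly bounded and equicontinuous, so Arzel\`a--Ascoli extracts a subsequence $f_{n_k}\to f^*$ uniformly on compact subsets of $\XC$, with $f_{n_k}\powerC\to (f^*)\powerC$ uniformly on compact subsets of $\YC$. Tightness of the probability measures $\mu$ and $\nu$ combined with the uniform bound $\norm{c}_\infty$ lets dominated convergence transfer the integrals, forcing $f^*\in S_c(\mu,\nu)$. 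The perturbation terms $h_1^{(n_k)}(f_{n_k})+h_2^{(n_k)}(f_{n_k}\powerC)$ converge to $h_1(f^*)+h_2((f^*)\powerC)$ because elements in the support of the Donsker limit are uniformly continuous with respect to the intrinsic $L^2$-pseudo-metric, which dominates the uniform-on-compacts convergence via dominated convergence.

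Third, with $\Phi'_{(\mu,\nu)}$ in hand I would invoke the delta method. The Donsker hypotheses give $\sqrt{n}(\hat\mu_n-\mu)\konvD \mathbb{G}_\mu$ in $\ellInf{\FC_c}$ and $\sqrt{m}(\hat\nu_m-\nu)\konvD \mathbb{G}_\nu$ in $\ellInf{\FC_c\powerC}$, which, coupled with $\Phi'_{(\mu,\nu)}$ applied in one coordinate at a time, immediately produces \eqref{eq:GeneralCLT_OneSampleMu} and \eqref{eq:GeneralCLT_OneSampleNu}. For \eqref{eq:GeneralCLT_TwoSample}, independence of the two samples together with $m/(n+m)\to\delta$ yields the joint weak convergence $\sqrt{\tfrac{nm}{n+m}}\big(\hat\mu_n-\mu,\hat\nu_m-\nu\big)\konvD \big(\sqrt{\delta}\,\mathbb{G}_\mu,\sqrt{1-\delta}\,\mathbb{G}_\nu\big)$ in the product space, and the bivariate delta method applied to $\Phi$ delivers the stated limit.

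The main obstacle I anticipate is the $\limsup$ bound in the directional differentiability step: the supremum defining $\Phi$ ranges over the infinite-dimensional class $\FC_c$, so compactness of approximate maximizers must be extracted via Arzel\`a--Ascoli in the topology of uniform convergence on compact sets, and one must simultaneously (i) push the subsequential limit into $S_c(\mu,\nu)$ via dominated convergence and (ii) transfer the linear perturbation term along the selected subsequence via intrinsic continuity of Gaussian sample paths. Both steps rely crucially on Assumptions \ref{ass:CostsContinuousAndBounded} and \ref{ass:EquicontinuityLocallyCompact}/\ref{ass:EquicontinuityCompact}, without which the approach would break down.
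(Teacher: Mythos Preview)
Your plan follows the same architecture as the paper's proof: recast $\OT_c$ via duality as a sup-functional on $\ellInf{\FC_c}\times\ellInf{\FC_c\powerC}$, establish Hadamard directional differentiability with derivative $\sup_{f\in S_c(\mu,\nu)}[\cdot]$, and conclude via the functional delta method of \cite{Roemisch04}. The Donsker inputs and the two-sample joint convergence are handled identically. The paper also does the $\limsup$ step by extracting near-maximizers and applying Arzel\`a--Ascoli in the compact-open topology. One organizational difference: the paper takes as tangent set the contingent cone $\TC_\mu(\PC(\supp\mu))$, i.e., the closure of $\{(\mu'-\mu)/t\}$, and then checks via Portmanteau that $\mathbb{G}_\mu$ concentrates there; you instead work tangentially to the support of $\mathbb{G}_\mu$ and exploit $\rho_\mu$-uniform continuity of sample paths. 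Both choices are legitimate for R\"omisch's delta method.

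There are, however, two genuine soft spots in your $\limsup$ argument that the paper treats with more care. First, under Assumption~\ref{ass:EquicontinuityLocallyCompact} (locally compact $\XC$, not compact), uniform convergence of $f_{n_k}\to f^*$ on compacta does \emph{not} imply $f_{n_k}\powerC\to (f^*)\powerC$: the $c$-transform is a global infimum over $\XC$. The paper deals with this by applying Arzel\`a--Ascoli a second time to $(f_n\powerC)$ on $\YC$, obtaining a limit $g$ that is a priori unrelated to $(f^*)\powerC$, and only afterwards showing via duality that $f^*=h+a$ on $\supp(\mu)$ and $g=h\powerC-a$ on $\supp(\nu)$ for some $h\in S_c(\mu,\nu)$ and constant $a$. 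Second, the limit $f^*$ need not itself lie in $\FC_c$ (only in its compact-open closure) nor in $S_c(\mu,\nu)$; see the paper's Remark~\ref{rem:ConstantMightBeNecessary}. So the quantity $h_1(f^*)$ you write down is not defined without further work, and even once you identify $f^*$ with $h+a$ you must argue that the tangent direction annihilates the constant $a$. The paper gets this for free from its tangent-cone choice (differences of probability measures kill constants); your Gaussian-support approach would also kill constants since $\rho_\mu(f,f+a)=0$, but you need to extend $h_1$ off $\FC_c$ to make sense of $h_1(f^*)$ and then bound it by the supremum over $S_c(\mu,\nu)$. Both points are fixable along the lines you sketch, but as written they are gaps rather than details.
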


\begin{proof}
Our proof is based on the Hadamard directional differentiability of the OT cost on the set of probability measures $\PC(\XC)\times\PC(\YC)$ with respect to the topology induced by $\ellInf{\FC_{c}}\times \ellInf{\FC_{c}\powerC}$ (see the subsequent \Cref{thm:OTHadamardDiff}).  Under settings $(i)$ and $(ii)$ 
it holds for $n \rightarrow \infty$ and $m \rightarrow \infty$ that 
\begin{equation*}
    \sqrt{n}(\hat \mu_n - \mu)\konvD \mathbb{G}_\mu \text{ in } \ellInf{\FC_{c}}, \quad \sqrt{m}(\hat \nu_m - \nu)\konvD \mathbb{G}_\nu \text{ in } \ellInf{\FC_{c}\powerC},
\end{equation*}
respectively. For setting $(iii)$ it holds by \cite[Example 1.4.6]{van1996weak} that the empirical processes converge jointly
\begin{equation*}
    \sqrt{\frac{nm}{n+m}}\big(\hat \mu_n - \mu, \hat \nu_m - \nu\big	)\konvD \left(\sqrt{\delta}\mathbb{G}_\mu, \sqrt{1-\delta}\mathbb{G}_\nu\right) \text{ in } \ellInf{\FC_{c}}\times\ellInf{\FC_{c}\powerC}
\end{equation*}
with $m/(n+m)\to\delta\in(0,1)$. Moreover, \Cref{thm:OTHadamardDiff} below asserts that $\OT_c \colon \PC(\XC)\times \PC(\YC) \rightarrow \mathbb{R}$ is  Hadamard directionally differentiable at $(\mu,\nu)$ with respect to $\ellInf{\FC_{c}}\times \ellInf{\FC_{c}\powerC}$ tangentially to $\PC(\tilde \XC)\times \PC(\tilde \YC)$  with $\tilde \XC =\supp(\mu)$ and $\tilde \YC =\supp(\nu)$ and $\TC_{\mu}\big(\PC(\tilde \XC)\big)=\closure{\big\{ \frac{\mu^\prime-\mu}{t} \text{ for } t>0, \mu^\prime\in \PC(\tilde\XC)\big\}}$ and analogously for $\TC_{\nu}\big(\PC(\tilde \YC)\big)$. Portmanteau's Theorem for closed sets \cite[Theorem 1.3.4 (iii)]{van1996weak} proves that 
\begin{equation}\label{eq:PortmanteauArgument}
    \mathbb{P}\left(\mathbb{G}_\mu \in \TC_{\mu}(\PC(\tilde \XC)) \right) \geq \limsup_{n\to\infty} \prob{\sqrt{n}(\hat \mu_n - \mu) \in \TC_{\mu}(\PC(\tilde \XC)) } = 1
\end{equation}
since $\supp(\mu_n) \subseteq \supp(\mu)$, and analogously for $\mathbb{G}_\nu$. Then, the statements in \Cref{thm:GeneralCLT} follow from the functional delta method \cite[Theorem 1]{Roemisch04}. 
\end{proof}

We investigate specific settings that yield novel CLTs in \Cref{sec:LLspecificinstance}. At this stage, we like to highlight that \Cref{thm:GeneralCLT} links CLTs for the empirical OT cost to the study of the function classes $\FC_c$ and $\FC_c^c$ being Donsker, respectively.
\begin{remark}
Even if Assumptions \ref{ass:EquicontinuityLocallyCompact} and \ref{ass:EquicontinuityCompact} fail to hold, the first step in the proof of \Cref{thm:OTHadamardDiff} still asserts  Hadamard directional differentiability of the OT cost. However, in this case the derivative rather depends on the set of $\epsilon$-approximate optimizer $S_c(\mu, \nu, \epsilon)\coloneqq \left\{ f \in \FC_{c} \,\mid\, \mu(f)  +\nu(f\powerC) \geq  \OT_c(\mu, \nu) - \epsilon\right\}$ instead of the set of Kantorovich potentials $S_c(\mu, \nu)$. Hence, employing a functional delta method \citep[Theorem 1]{Roemisch04}, we obtain that, as long as $\FC_c$ is $\mu$-Donsker, a CLT of the form  
\begin{equation*}
    \sqrt{n}\Big(\OT_c(\hat \mu_n, \nu) - \OT_c(\mu, \nu)\Big) \konvD \lim_{\epsilon \searrow 0}\sup_{f \in S_c(\mu, \nu, \epsilon)} \mathbb{G}_\mu(f)
\end{equation*}
is valid. Assumptions \ref{ass:EquicontinuityLocallyCompact} and \ref{ass:EquicontinuityCompact} serve to simplify the limit distribution obtained in \Cref{thm:GeneralCLT}. For more details we refer to \Cref{subsec:HadamardDiff}.
\end{remark}

\begin{remark}[Bootstrap consistency] It is well-known that the naive $n$-out-of-$n$ bootstrap fails to be consistent if the functional is not linearly Hadamard differentiable \citep{dumbgen1993nondifferentiable,fang2019}. Instead, Hadamard directional differentiability \citet[Proposition 2]{dumbgen1993nondifferentiable} asserts consistency of the $k$-out-of-$n$ bootstrap for $k = o(n)$ which has immediate consequences for the approximation of quantiles for the empirical OT cost. We formalize this principle exemplary for the one-sample case. For $n, k \in \N$, consider i.i.d. samples $X_1, \dots, X_n\sim \mu$ with empirical measure $\hat \mu_n= \frac{1}{n} \sum_{i = 1}^{n} \delta_{X_i}$ and consider i.i.d. bootstrap samples $X_1^*, \dots, X_k^* \sim \hat \mu_n$ with corresponding (bootstrap) empirical measure $\hat \mu_{n,k}^*= \frac{1}{k} \sum_{i = 1}^{k} \delta_{X_i^*}$. Then, it follows for $n,k \rightarrow\infty $ with $k = o(n)$ that 
 \begin{align*}
      \sup_{h \in \BL{\R}} \Big|  &\EV{h\left( \sqrt{k}(\OT_c(\hat \mu_{n,k}^*, \nu) - \OT_c(\hat \mu_n, \nu) ) \right) \Big| X_1, \dots, X_n } \\
     &- \EV{h\left( \sqrt{n}(\OT_c(\hat \mu_{n}, \nu) - \OT_c(\mu, \nu) ) \right) } \Big| \konvP 0.
 \end{align*}
 Herein, $\konvP$ denotes convergence in outer probability \citep{van1996weak} and $\BL{\R}$  is the set of real-valued functions on $\R$ that are absolutely bounded by one and Lipschitz with modulus one.
\end{remark}

\subsection{Hadamard Directional Differentiability}\label{subsec:HadamardDiff}

Based on Kantorovich duality \eqref{eq:OTdual}, we consider the OT cost as an optimization problem over the set of $c$-concave functions mapping from the set of probability measures $\PC(\XC)\times \PC(\YC)$ to $\RR$. % 
For a pair of probability measures $\mu\in \PC(\XC), \nu\in\PC(\YC)$, we prove Hadamard directional differentiability of the OT cost at $(\mu, \nu)$  with respect to the Banach spaces $\ellInf{\FC_c}\times \ellInf{\FC_c\powerC}$ equipped with uniform norm. For a definition of this notion of differentiability, we refer to \cite{Roemisch04}.  
Moreover, since the support of empirical measures is always contained in the support of the underlying measure, we focus in the following only on differentiability \emph{tangentially} to the set of probability measures whose support is contained in the support of $\mu$ and $\nu$, respectively. This means that only those perturbations for $\mu$ and $\nu$ are considered which do not lead to an enlargement of the support.

\begin{theorem}[Hadamard directional differentiability of OT cost]\label{thm:OTHadamardDiff}
Suppose that for two Polish spaces $\XC$, $\YC$ and the cost function $c\colon \XC\times \YC\rightarrow \R_+$ the Assumption \ref{ass:CostsContinuousAndBounded} combined with \ref{ass:EquicontinuityLocallyCompact} or \ref{ass:EquicontinuityCompact} are satisfied. Consider the function class $\FC_{c}$ in \eqref{eq:OTFunctionclassLarge} and two probability measures $\mu\in \PC(\XC), \nu \in\PC(\YC)$ with their respective support $\tilde \XC\coloneqq \supp(\mu)$ and $\tilde \YC \coloneqq \supp(\nu)$. Then, the OT cost functional 
\begin{equation}\label{eq:OTFunctionalHadDiff} 
\OT_c\colon \PC(\XC)\times \PC(\YC)\subseteq \ell^\infty(\FC_{c})\times  \ell^\infty(\FC\powerC_{c}) \rightarrow \R,\quad (\mu, \nu) \mapsto \sup_{f \in \FC_{c}} \left(\mu(f)  +\nu(f\powerC)\right)
\end{equation}
is  Hadamard directionally differentiable at $(\mu, \nu)$ tangentially to the set of probability measures $ \PC(\tilde \XC)\times \PC(\tilde \YC)$. The derivative is equal to
\begin{align*}
     \DH_{|(\mu, \nu)}\OT_c\colon \TC_{\mu}\big(\PC(\tilde \XC)\big)\times \TC_{\nu}\big(\PC(\tilde\YC)\big)\rightarrow \R,\quad (\Delta_\mu,\Delta_\nu)  \mapsto \sup_{f \in S_c(\mu, \nu)}  \left( \Delta_{\mu}(f) +  \Delta_{\nu}(f\powerC) \right). 
\end{align*}
Herein, the contingent Bouligand cone $\TC_{\mu}\big(\PC(\tilde \XC)\big)$ to $\PC(\tilde \XC)$ at $\mu$ is given by the topological closure $\closure{\big\lbrace \frac{\mu'-\mu}{t} \text{ for } t>0,\, \mu'\in \PC(\tilde \XC)\big\rbrace}\subseteq \ellInf{\FC_c}$ and analogously for $\TC_{\nu}\big(\PC(\tilde \YC)\big)$. 
\end{theorem}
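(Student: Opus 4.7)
The approach is a classical envelope-theorem argument adapted to the functional-analytic setting of $\ellInf{\FC_c}\times \ellInf{\FC_c\powerC}$: since Kantorovich duality~\eqref{eq:OTdual} writes $\OT_c$ as a supremum of linear functionals in $(\mu,\nu)$ indexed by $\FC_c$, the directional derivative should be the supremum of those linear perturbations restricted to the argmax set $S_c(\mu, \nu)$. I plan to establish matching liminf and limsup bounds for the difference quotient along admissible sequences $t_n \searrow 0$ and $(\Delta_\mu^n, \Delta_\nu^n)\to (\Delta_\mu, \Delta_\nu)$ in $\ellInf{\FC_c}\times \ellInf{\FC_c\powerC}$ satisfying $\mu_n \coloneqq \mu + t_n\Delta_\mu^n \in \PC(\tilde\XC)$ and $\nu_n \coloneqq \nu + t_n\Delta_\nu^n \in \PC(\tilde\YC)$.

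The liminf direction is immediate: for any fixed $f^* \in S_c(\mu, \nu)$, feasibility of $f^*$ in the dual for $(\mu_n, \nu_n)$ yields
$$\OT_c(\mu_n, \nu_n) \geq \mu_n(f^*) + \nu_n(f^{*c}) = \OT_c(\mu,\nu) + t_n\bigl[\Delta_\mu^n(f^*) + \Delta_\nu^n(f^{*c})\bigr],$$
so dividing by $t_n$ and exploiting uniform-norm convergence of the $\Delta^n$ yields $\liminf_n \tfrac{1}{t_n}\bigl(\OT_c(\mu_n,\nu_n) - \OT_c(\mu,\nu)\bigr) \geq \Delta_\mu(f^*) + \Delta_\nu(f^{*c})$, and taking supremum over $f^*$ gives the lower bound. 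For the limsup direction, I pick near-maximizers $f_n \in \FC_c$ of $\OT_c(\mu_n, \nu_n)$ with slack $o(t_n)$. Using $\mu(f_n) + \nu(f_n\powerC) \leq \OT_c(\mu,\nu)$ from duality, the difference quotient is bounded above by $\Delta_\mu^n(f_n) + \Delta_\nu^n(f_n\powerC) + o(1)$. Along any subsequence realising the limsup, Assumption~\ref{ass:CostsContinuousAndBounded} combined with \ref{ass:EquicontinuityLocallyCompact} or \ref{ass:EquicontinuityCompact} renders $\FC_c$ and $\FC_c\powerC$ uniformly bounded by $\|c\|_\infty$ and equicontinuous on compact subsets, so Arzel\`a--Ascoli extracts a further subsequence with $f_n \to f^*$ and $f_n\powerC\to f^*\powerC$ uniformly on compact sets. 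The Lipschitz bound $|\OT_c(\mu_n,\nu_n) - \OT_c(\mu,\nu)| = O(t_n)$ together with tightness of $\mu, \nu$ on the Polish spaces and dominated convergence then identifies $f^* \in S_c(\mu,\nu)$ via $\mu(f^*) + \nu(f^{*c}) = \OT_c(\mu,\nu)$.

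The main technical obstacle is the closing step: showing $\Delta_\mu^n(f_n) \to \Delta_\mu(f^*)$ along the extracted subsequence (and analogously on the $\YC$-side). I decompose
$$\Delta_\mu^n(f_n) - \Delta_\mu(f^*) = \bigl(\Delta_\mu^n - \Delta_\mu\bigr)(f_n) + \bigl(\Delta_\mu(f_n) - \Delta_\mu(f^*)\bigr),$$
where the first summand vanishes by uniform convergence in $\ellInf{\FC_c}$ applied to the uniformly bounded class $\FC_c$. For the second, I approximate $\Delta_\mu$ in $\ellInf{\FC_c}$ by rescaled signed measures $(\mu' - \mu)/t$ with $\mu'\in\PC(\tilde\XC)$ (available by definition of the tangent cone), on which the desired convergence reduces to $\mu(f_n)\to \mu(f^*)$ and $\mu'(f_n)\to\mu'(f^*)$ --- both consequences of tightness of the probability measures, uniform boundedness of $f_n$ by $\|c\|_\infty$, and uniform convergence of $f_n$ on compact subsets. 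The delicate point is the uniform-in-$n$ control of this approximation error, which is precisely what the equicontinuity together with (local) compactness hypotheses \ref{ass:EquicontinuityLocallyCompact}/\ref{ass:EquicontinuityCompact} underwrite. Combining with the analogous $\YC$-side estimate gives the matching upper bound $\Delta_\mu(f^*) + \Delta_\nu(f^{*c}) \leq \sup_{f \in S_c(\mu,\nu)} \bigl[\Delta_\mu(f) + \Delta_\nu(f\powerC)\bigr]$, completing the identification of the Hadamard directional derivative.
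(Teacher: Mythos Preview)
Your approach is essentially the paper's: both establish the derivative via Arzelà--Ascoli compactness on near-optimizers combined with approximation of tangent elements $\Delta_\mu$ by rescaled signed measures $(\mu'-\mu)/t$. The paper organizes this differently---first invoking \citet[Proposition~1]{Roemisch04} to obtain the derivative as $\lim_{\epsilon \searrow 0}\sup_{f \in S_c(\mu,\nu,\epsilon)}[\Delta_\mu(f) + \Delta_\nu(f^c)]$ over $\epsilon$-approximate optimizers, then simplifying to $S_c(\mu,\nu)$---but the core content matches your liminf/limsup argument.

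There are, however, two subtleties you gloss over that the paper handles carefully. First, your claim that Arzelà--Ascoli yields $f_n^c \to (f^*)^c$ is unjustified under Assumption~\ref{ass:EquicontinuityLocallyCompact}: with $\XC$ only locally compact, $f_n \to f^*$ uniformly on compacta does \emph{not} imply convergence of the $c$-conjugates. The paper instead extracts a \emph{separate} subsequence so that $f_n^c \to g$ for some continuous $g$, and then argues via $f^*(x)+g(y) \le c(x,y)$ and $\mu(f^*)+\nu(g) = \OT_c(\mu,\nu)$ that $f^* = g^c$ on $\supp(\mu)$ and $g = (g^c)^c$ on $\supp(\nu)$---equality only on the supports, which suffices because the tangent directions are determined there via the $(\mu'-\mu)/t$ approximation. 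Second, the limit $f^*$ need not lie in $\FC_c$, hence not in $S_c(\mu,\nu)$: the bounds on the generator $g$ in~\eqref{eq:OTFunctionclassLarge} can fail in the limit, so the paper shifts by a constant $a$ to obtain $h \in S_c(\mu,\nu)$ (cf.\ \Cref{rem:ConstantMightBeNecessary}). This shift is harmless because the approximants $(\mu'-\mu)/t$ annihilate constants, but it must be threaded through the approximation step; your closing argument implicitly evaluates $\Delta_\mu \in \ellInf{\FC_c}$ at $f^*$, which presupposes $f^* \in \FC_c$.
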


\begin{proof}

The proof is inspired by \citet[Proposition 1]{Roemisch04} and \citet[Theorem 2.1 and Corollary 2.2]{Carcamo20}. Compared to their setting, the proof here is specifically tailored to the OT cost in \eqref{eq:OTdual} and exploits properties of the function class $\FC_c$. We divide the proof into four steps. The first two essentially prove the Hadamard directional differentiability and simplify the representation of the derivative, whereas the last two are concerned with the convergence results for  Kantorovich potentials.\\

\textit{Step 1. Hadamard directional differentiability.} Let $(t_n)_{n \in \N}$ be a positive sequence with $t_n \searrow 0$ and take sequences $(\Delta_{\mu,n},\Delta_{\nu, n}) \in \ell^\infty(\FC_{c})\times \ell^\infty(\FC_{c}\powerC)$ such that for all $n \in \N$ holds $$\mu_n \coloneqq \mu + t_n \Delta_{\mu,n}\in \PC(\tilde \XC), \quad \nu_n \coloneqq \nu + t_n \Delta_{\nu,n}\in \PC(\tilde \YC),$$
	with $(\Delta_{\mu,n},\Delta_{\nu, n}) \rightarrow (\Delta_\mu, \Delta_\nu)\in \TC_{\mu}\big(\PC(\tilde \XC)\big)\times \TC_{\nu}\big(\PC(\tilde \YC)\big)$ for $n \rightarrow \infty$ in the space $\ell^\infty(\FC_{c})\times  \ell^\infty(\FC_{c}\powerC)$. The representation of the Bouligand cone as a topological closure follows by an observation of \cite{Roemisch04} since $\PC(\tilde \XC)$ and $\PC(\tilde \YC)$ are convex sets. For $\mu, \mu_n$ and $\nu, \nu_n$ considered as bounded functionals on $\FC_c$ and $\FC_c\powerC$, respectively, it follows along the lines of \citet[Proposition 1]{Roemisch04} that the OT cost is Hadamard directionally differentiable with 
$$ 
\DH_{|(\mu, \nu)}(\Delta_\mu,\Delta_\nu) = \lim_{n\rightarrow \infty} \frac{1}{t_n}\left(\OT_c(\mu_n, \nu_n) - \OT_c(\mu, \nu) \right)=
 % = 
\lim_{\epsilon \searrow 0}\sup_{f \in S_c(\mu, \nu,\epsilon) } ( \Delta_{\mu}(f) + \Delta_{\nu}(f\powerC)),
$$
 where $S_c(\mu, \nu,\epsilon)$ denotes the set of $\epsilon$-approximizers 
\begin{align*}
    S_c(\mu, \nu,\epsilon)\coloneqq \left\lbrace f\in\FC_c \, \mid\, \mu(f)+\nu(f^c)\geq \OT_c(\mu,\nu)-\epsilon\right\rbrace.
\end{align*}

\textit{Step 2. Simplifying the derivative.} It remains to show that 
	\begin{equation} \label{eq:OmitEpsilonToZero} 
    	\lim_{\epsilon \searrow 0} \sup_{ f \in S_c(\mu, \nu, \epsilon)} \left( \Delta_{\mu}(f) +  \Delta_{\nu}(f\powerC) \right) =  \sup_{ f \in S_c(\mu, \nu)} ( \Delta_{\mu}(f) +  \Delta_{\nu}(f\powerC) ).
	\end{equation}
	The left hand side in \eqref{eq:OmitEpsilonToZero} is greater or equal to the right hand side since $S_c(\mu, \nu)\subseteq S_c(\mu, \nu, \epsilon)$ for all $\epsilon >0$. For the converse, take a positive decreasing sequence $(\epsilon_n)_{n \in \N}$ with $\epsilon_n \searrow 0$ and a sequence  $(f_n)_{n \in \N} \subseteq \FC_{c}$ with $f_n \in S_c(\mu, \nu, \epsilon_n)$ for which 
	\begin{equation*}
        \sup_{ f \in S_c(\mu, \nu, \epsilon_n)} ( \Delta_{\mu}(f) +  \Delta_{\nu}(f\powerC) ) - \epsilon_n \leq  \Delta_{\mu}(f_n) +  \Delta_{\nu}(f_n\powerC) . 
    \end{equation*}
    As we prove below there exists a subsequence  $(f_{n_k})_{k\in \N}$ such that $f_{n_k}$ and $f_{n_k}\powerC$ converge pointwise for $k \rightarrow \infty$ on $\supp(\mu)$ and $\supp(\nu)$ to functions  $h+a$ and $h\powerC -a$, respectively, for $h\in S_c(\mu, \nu)$ and $a \in \R$. 
    Once we show that \begin{equation}\label{eq:ConvergenceDelta_fn}\lim_{k \rightarrow \infty} ( \Delta_{\mu}(f_{n_k}) +  \Delta_{\nu}(f_{n_k}\powerC) ) = ( \Delta_{\mu}(h) +  \Delta_{\nu}(h\powerC) ),\end{equation}
    equality \eqref{eq:OmitEpsilonToZero} follows from
    \begin{align*}
        	\lim_{\epsilon \searrow 0} \sup_{ f \in S_c(\mu, \nu, \epsilon)} ( \Delta_{\mu}(f) +  \Delta_{\nu}(f\powerC)) &\leq \lim_{k \rightarrow \infty} \Delta_{\mu}(f_{n_k}) +  \Delta_{\nu}(f_{n_k}\powerC) \\& = ( \Delta_{\mu}(h) +  \Delta_{\nu}(h\powerC) )\leq \sup_{ f \in S_c(\mu, \nu)} ( \Delta_{\mu}(f) +  \Delta_{\nu}(f\powerC) ).
    \end{align*}
    To verify \eqref{eq:ConvergenceDelta_fn}, let $\delta>0$ and select $M \in \N$ such that  $\norm{\Delta_{\mu} - \Delta_{\mu,M}}_{\FC_c}<\delta/4$. Pointwise convergence of $f_{n_k}$ on $\supp(\mu)$ to $h+a$ combined with the uniform bound on $\FC_{c}$ asserts by dominated convergence for $\mu_M, \mu$ by $\supp(\mu_M)\subseteq \supp(\mu)$ existence of $K \in \N$ with $$\left|\mu_M (f_{n_k} - (h+a))\right| + \left|\mu\big(f_{n_k} - (h+a)\big)\right| <  \delta t_M/2 \quad \forall k \geq K.$$ Hence, for all $k \geq K$ it follows that 
    \begin{align*}
	    |\Delta_\mu(f_{n_k}) - \Delta_\mu(h)| &\leq 2\norm{ \Delta_\mu- \Delta_{\mu,M}}_{\FC_c} + |\Delta_{\mu, M}(f_{n_k}) - \Delta_{\mu, M}(h)|\\
	    &= 2\norm{ \Delta_{\mu}- \Delta_{\mu, M}}_{\FC_c} + t_M^{-1}\left|(\mu_M-\mu)\big(f_{n_k} - h\big)\right|\\
	    &= 2\norm{ \Delta_{\mu}- \Delta_{\mu, M}}_{\FC_c} + t_M^{-1}\left|(\mu_M-\mu) \big(f_{n_k} - (h-a)\big)\right| < \delta,
	\end{align*}
	where we use in the second equality the definition of $\Delta_{\mu,M}$ and in the last equality that $(\mu - \mu_M)(a) = 0$. Repeating the argument for $|\Delta_\nu(f_{n_k}\powerC) - \Delta_\nu(h\powerC)|$ yields \eqref{eq:ConvergenceDelta_fn}.\\

\textit{Step 3. Existence of converging subsequences.} 
We prove existence of a subsequence of $(f_n, f_n\powerC)$ that converges uniformly on compact sets to a pair of continuous functions $(f,g)$. Uniform convergence on compact sets of continuous functions on $\XC$ is induced by the compact-open topology which is metrizable since $\XC$ is a locally compact Polish space \cite[page 68]{mccoy1988topological}.
% To verify existence of a converging subsequence of $f_n$,
We show that $\FC_c$ is relatively compact in the compact-open topology by means of a general version of the Arzel\`a-Ascoli theorem.\\[0.3cm]
\begin{minipage}{.03\linewidth}\hspace{0.1cm}\end{minipage}%
\begin{minipage}{.94\linewidth} \begin{fact}[{\citealt[Theorem 3.2.6]{mccoy1988topological}}]
\label{thm:GeneralArzelaAscoli}
If $\XC$ is locally compact, then a set of continuous real-valued functions on $\XC$ is compact in the compact-open topology if and only if it is closed, pointwise bounded and equicontinuous.
\end{fact}\end{minipage}\\[0.2cm]
Since $\{c(\cdot, y) \mid y \in \YC\}$ is equicontinuous, so is $\FC_{c}$ and its closure $\closure(\FC_{c})$ in the compact-open topology. Moreover, since $\FC_{c}$ is uniformly bounded by $\norm{c}_\infty$, \Cref{thm:GeneralArzelaAscoli} asserts the existence of a subsequence of $f_n$ that converges uniformly on compact sets to a continuous function $f\in \closure(\FC_{c})$. By restricting to a subsequence, we assume that $f_n$ uniformly converges on compact sets to $f$. Under Assumption \ref{ass:EquicontinuityCompact}, this asserts that $f_n$ uniformly converges on $\XC$ to $f$ and thus $f_n \powerC$ uniformly converges on $\YC$ to $g\coloneqq f\powerC$. Under Assumption \ref{ass:EquicontinuityLocallyCompact}, we instead repeat the above argument and assume by local compactness of $\YC$ and equicontinuity of $\{c(x, \cdot) \mid x \in \XC\}$ that $f_n\powerC$ uniformly converges on compact sets of $\YC$ to some continuous function $g \in \closure(\FC_{c}\powerC)$.\\

\textit{Step 4. Structural properties of limits with general OT theory.} 
It remains to show that there exists a $c$-concave function $h\in S_c(\mu, \nu)$ and some $a \in \R$ such that $f=h+a$ on $\supp(\mu)$ and $g = h\powerC-a$ on $\supp(\nu)$. For this purpose, note that uniform convergence on compact sets implies pointwise convergence. 
    Since $f_n, f, f_n\powerC, g$ are all absolutely bounded by $\norm{c}_\infty$, we find by dominated convergence that 
    \begin{equation*}
        \mu(f) + \nu(g) = \lim_{n\to\infty} \mu(f_n) + \nu(f_n^c) \ge \lim_{n\to\infty} \OT_c(\mu, \nu) - \epsilon_n = \OT_c(\mu, \nu).
    \end{equation*}
    Moreover, for $(x,y) \in \XC\times \YC$ we find that \begin{align}\label{eq:FandG_are_feasible}
        f(x) + g(y) = \lim_{n \rightarrow \infty} f_n (x) + \lim_{n \rightarrow \infty} f_n\powerC(y) = \lim_{n \rightarrow \infty} f_n(x) + f_n\powerC(y) \leq c(x,y),
\end{align}
which asserts by the dual formulation for the OT cost \cite[Theorem 5.9]{villani2008optimal} that $\mu(f) + \nu(g) =\OT_c(\mu, \nu).$ Upon defining $\tilde h \coloneqq g\powerC$, we therefore obtain from \eqref{eq:FandG_are_feasible} the inequalities $f\leq \tilde h$ on $\XC$ and $g \leq \tilde h\powerC$ on $\YC$. Hence, it holds that $$ \OT_c(\mu, \nu)=  \mu(f) + \nu(g) \leq\mu( \tilde h) + \nu(g) \leq  \mu( \tilde h) + \nu( \tilde h\powerC)\leq\OT_c(\mu, \nu),$$ where the last inequality follows from $ \tilde h(x) +  \tilde h\powerC(y) \leq c(x,y)$ for all $(x,y)\in \XC\times \YC$. 
We thus conclude that $f = \tilde h$ holds $\mu$-almost surely and $g = \tilde h\powerC$ holds $\nu$-almost surely. 

Under Assumption \ref{ass:EquicontinuityLocallyCompact} it follows from step three that both $f$ and $\tilde h$ as well as $g$ and $\tilde h^c$ are continuous on $\XC$ and $\YC$, respectively. Thus, it holds (deterministically) that $f = \tilde h$ on $\supp(\mu)$ and $g = \tilde h\powerC$ on $\supp(\nu)$. 
Likewise, under Assumption \ref{ass:EquicontinuityCompact} it follows that $f$ and $\tilde h$ are continuous on $\XC$ which yields $f = \tilde h$ on $\supp(\mu)$.
Further, from step three we know  under \ref{ass:EquicontinuityCompact} that $g = f^c$ on $\YC$, i.e., $g$ is $c$-concave and hence  $g = g^{cc} = \tilde h^{c}$  on $\YC$ by \citet[Proposition 1.34]{santambrogio2015optimal}.
 Finally, we note by \citet[Remark 1.13]{vil03} that any $c$-concave Kantorovich potential $\tilde h$ can be suitably shifted by a constant $a\in \R$ such that $0 \leq \tilde h(x) - a \leq \norm{c}_{\infty}$ for all $x \in \XC$ and $-\norm{c}_{\infty}\leq (\tilde h -a)\powerC(y)= \tilde h\powerC(y)+a\leq 0$ for all $y\in  \YC$. In particular, the function $h \coloneqq \tilde h - a$ lies in $S_c(\mu, \nu)$ and fulfills the asserted properties.
\end{proof}

\begin{remark}\label{rem:ConstantMightBeNecessary}
In the proof of \Cref{thm:OTHadamardDiff}, we cannot guarantee that the $c$-concave function $\tilde h = g \powerC$ lies in $S_c(\mu, \nu)\subseteq \FC_c$ and therefore have to shift it by a suitable constant $a$. This is due to the inequalities $-\norm{c}_\infty \leq g \leq 0$ being possibly violated.
\end{remark}

\section{Normal Limits under Unique Kantorovich Potentials}\label{sec:NormalLimitLaws}

The set of Kantorovich potentials $S_c(\mu, \nu)$ in \eqref{eq:KantorovichPotentials} and its $c$-conjugates $S_c^c(\mu, \nu)$ play a defining role for the limiting random variables in \Cref{thm:GeneralCLT}. A particular setting of interest arises if Kantorovich potentials are uniquely determined. However, since any $f\in S_c(\mu, \nu)$ can be shifted arbitrarily and $f+a$ for $a\in\R$ is still a $c$-concave function that solves \eqref{eq:OTdual}, the set $S_c(\mu, \nu)$ can only be a singleton up to additive constants. Furthermore, it suffices if uniqueness holds almost surely.

\begin{definition}[Unique Kantorovich potentials]\label{def:UniquePotentials}
    The Kantorovich potentials $S_c(\mu, \nu)$ in \eqref{eq:KantorovichPotentials} are said to be \emph{unique} if $f_1-f_2$ for all $f_1,f_2\in S_c(\mu, \nu)$ is constant $\mu$-almost surely. 
\end{definition}

Notably, one can show that uniqueness of Kantorovich potentials $S_c(\mu, \nu)$ with respect to $\mu$ is in fact equivalent to uniqueness of the $c$-conjugate Kantorovich potentials $S_c^c(\mu, \nu)$ with respect to $\nu$ \cite[Lemma 5]{staudt2021uniqueness}.
Under this form of uniqueness, the limit random variables for the empirical OT cost simplify and follow a centered normal distribution.

\begin{theorem}[Normal limits]\label{thm:NormalLimits}
    Consider Polish spaces $\XC$ and $\YC$ and a cost function $c\colon \XC\times \YC\rightarrow\R_+$ satisfying Assumption \ref{ass:CostsContinuousAndBounded} combined with \ref{ass:EquicontinuityLocallyCompact} or \ref{ass:EquicontinuityCompact}.
     Assume that the Kantorovich potentials $S_c(\mu, \nu)$ for $\mu\in \PC(\XC)$ and $\nu \in\PC(\YC)$ are unique and let $f\in S_c(\mu, \nu)$. 
   Then, for empirical measures $\hat \mu_n, \hat\nu_m$ the following CLT is valid.
    \begin{subequations}\label{eq:GeneralCLT_normal}
    \begin{enumerate}
        \item[$(i)$](One-sample from $\mu$) Suppose that $\FC_c$ is $\mu$-Donsker. Then, for $n \rightarrow \infty$, \begin{equation}\label{eq:GeneralCLT_OneSampleMu_normal}
            \sqrt{n}\left(\OT_c(\hat \mu_n, \nu) - \OT_c(\mu, \nu)\right) \konvD \GG_\mu(f) \sim  \NC(0, \Var_{X\sim\mu}[f(X)]).
        \end{equation}
        \item[$(ii)$](One-sample from $\nu$) Suppose that $\FC_c\powerC$ is $\nu$-Donsker. Then, for $n \rightarrow \infty$, \begin{equation}\label{eq:GeneralCLT_OneSampleNu_normal}
          \sqrt{m}\left(\OT_c(\mu, \hat\nu_m) - \OT_c(\mu, \nu)\right) \konvD  \GG_\nu(f\powerC) \sim \NC(0, \Var_{Y\sim\nu}[f\powerC(Y)]).
        \end{equation}
        \item[$(iii)$](Two-sample) Suppose that $\FC_c$ is $\mu$-Donsker and that $\FC_c\powerC$ is $\nu$-Donsker. Then, for $n,m \rightarrow \infty$ with $m/(n+m)\rightarrow \delta\in (0,1)$,  
            \begin{equation}
  \label{eq:GeneralCLT_TwoSample_normal}\begin{aligned}
                \sqrt{\frac{nm}{n+m}}\left(\OT_c(\hat \mu_n, \hat\nu_m) - \OT_c(\mu, \nu)\right) \konvD  \sqrt{\delta}\, \GG_\mu(f) + \sqrt{1-\delta}\,\GG_\nu(f\powerC)& \\ \qquad \quad\sim \NC\left(0, \delta\Var_{X\sim\mu}[f(X)] + (1-\delta)\Var_{Y\sim\nu}[f\powerC(Y)]\right)&.
        \end{aligned}
\end{equation}
    \end{enumerate}
    \end{subequations}
\end{theorem}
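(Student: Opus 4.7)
The plan is to apply \Cref{thm:GeneralCLT} and use the uniqueness hypothesis (\Cref{def:UniquePotentials}) to collapse the suprema in \eqref{eq:GeneralCLT_OneSampleMu}, \eqref{eq:GeneralCLT_OneSampleNu}, and \eqref{eq:GeneralCLT_TwoSample} to a single centered Gaussian random variable. The driving observation is that for the linear centered Gaussian process $\GG_\mu$ with covariance \eqref{eq:introcovariance}, any $f_1,f_2\in S_c(\mu,\nu)$ satisfy $f_1-f_2=a$ $\mu$-a.s.\ for some $a\in\R$ by \Cref{def:UniquePotentials}, and hence
\begin{equation*}
    \Var\bigl[\GG_\mu(f_1)-\GG_\mu(f_2)\bigr]=\int_{\XC}(f_1-f_2)^2\,\dif\mu-\Bigl(\int_{\XC}(f_1-f_2)\,\dif\mu\Bigr)^2=a^2-a^2=0.
\end{equation*}
Since $\GG_\mu(f_1)-\GG_\mu(f_2)$ is a centered Gaussian random variable, this forces $\GG_\mu(f_1)=\GG_\mu(f_2)$ $\mathbb{P}$-almost surely.

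To pass from this pairwise statement to constancy of $\GG_\mu$ on the (possibly uncountable) set $S_c(\mu,\nu)$, I would invoke the standard fact that the tight limit $\GG_\mu\in\ellInf{\FC_c}$ admits a version whose sample paths are uniformly continuous with respect to its intrinsic $L^2(\mu)$-semi-metric \cite[Section~1.5]{van1996weak}; since this semi-metric vanishes on $S_c(\mu,\nu)$ by the display above, $\GG_\mu$ is $\mathbb{P}$-almost surely constant on $S_c(\mu,\nu)$. Consequently, for any fixed $f\in S_c(\mu,\nu)$ one has $\sup_{f'\in S_c(\mu,\nu)}\GG_\mu(f')=\GG_\mu(f)$ almost surely, and the latter is centered normal with variance $\Var_{X\sim\mu}[f(X)]$ by \eqref{eq:introcovariance}. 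Combined with \Cref{thm:GeneralCLT}$(i)$ this yields \eqref{eq:GeneralCLT_OneSampleMu_normal}. Part $(ii)$ follows by the same argument applied to $\GG_\nu$ on $\FC_c\powerC$ after transferring the uniqueness hypothesis to the conjugate side via \cite[Lemma 5]{staudt2021uniqueness}, which states that $\mu$-a.s.\ uniqueness of $S_c(\mu,\nu)$ is equivalent to $\nu$-a.s.\ uniqueness of $S_c\powerC(\mu,\nu)$.

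For part $(iii)$, independence of the two samples makes the limits $\GG_\mu$ and $\GG_\nu$ independent, so combining both one-sample reductions the process $f\mapsto\sqrt{\delta}\,\GG_\mu(f)+\sqrt{1-\delta}\,\GG_\nu(f\powerC)$ is $\mathbb{P}$-a.s.\ constant on $S_c(\mu,\nu)$; the supremum in \eqref{eq:GeneralCLT_TwoSample} therefore reduces to $\sqrt{\delta}\,\GG_\mu(f)+\sqrt{1-\delta}\,\GG_\nu(f\powerC)$, a centered Gaussian with variance $\delta\Var_{X\sim\mu}[f(X)]+(1-\delta)\Var_{Y\sim\nu}[f\powerC(Y)]$ by independence. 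No deep obstacle is expected here: \Cref{thm:GeneralCLT} already supplies the distributional limits, the variance collapse is a one-line consequence of \eqref{eq:introcovariance}, and the only subtle point is the passage from pairwise a.s.\ equality to constancy on the entire (possibly uncountable) set $S_c(\mu,\nu)$, which is resolved by the standard continuity property of tight Gaussian processes on Donsker classes.
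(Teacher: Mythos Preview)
Your proposal is correct and reaches the same conclusion, but the route differs from the paper's in a way worth noting. You first invoke \Cref{thm:GeneralCLT} to obtain the supremum form of the limit and then collapse the supremum \emph{probabilistically}, arguing that the tight Gaussian process $\GG_\mu$ has a version with paths uniformly continuous in its intrinsic $L^2(\mu)$-semi-metric, which vanishes on $S_c(\mu,\nu)$ by the uniqueness hypothesis. The paper instead works one level earlier, at the Hadamard derivative itself: it observes (via Step~2 of the proof of \Cref{thm:OTHadamardDiff} and \Cref{lem:ContinuityOfKantPotentials}) that every tangent direction $\Delta_\mu\in\TC_\mu(\PC(\tilde\XC))$ satisfies $\Delta_\mu(f_1)=\Delta_\mu(f_2)$ \emph{deterministically} whenever $f_1,f_2\in S_c(\mu,\nu)$, because such $\Delta_\mu$ is invariant under constant shifts and the potentials agree up to a constant on $\supp(\mu)$. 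Uniqueness thus renders the derivative linear, and the functional delta method then yields a Gaussian limit directly, without ever needing to argue about almost-sure constancy of $\GG_\mu$ over an uncountable index set. Your approach is perfectly sound and arguably more transparent from the statement of \Cref{thm:GeneralCLT}; the paper's approach buys a cleaner, purely deterministic argument that sidesteps the separable-version step altogether. Both rely on \cite[Lemma~5]{staudt2021uniqueness} to transfer uniqueness to the conjugate side.
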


\begin{proof}
Due to  \citet[Lemma~5]{staudt2021uniqueness} and the continuity of Kantorovich potentials in this setting (\Cref{lem:ContinuityOfKantPotentials}), 
we find that the set of Kantorovich potentials $S_c(\mu, \nu)$ and its $c$-conjugate counterparts $S_c\powerC(\mu, \nu)$ are deterministically unique (up to constant shifts) on the support of $\mu$ and $\nu$, respectively. Since any $\Delta_\mu\in \TC_{\mu}\big(\PC(\tilde \XC)\big)$ fulfills by step two of the proof for \Cref{thm:OTHadamardDiff} the (deterministic) equality  $\Delta_{\mu}(f) = \Delta_{\mu}(f+a)$ for $f\in \FC_{c}$ and $a \in\R$ with $f+a \in \FC_{c}$, and likewise for $\Delta_{\nu}\in \TC_{\nu}(\PC(\tilde \YC))$, uniqueness of Kantorovich potentials implies linearity of the directional Hadamard derivative. The functional delta method \citep{Roemisch04} thus implies the weak limit for the empirical OT cost to be centered normal with variance as stated in \eqref{eq:GeneralCLT_normal}.
\end{proof}

A useful statistical application of \Cref{thm:NormalLimits} arises when the limit variance $\Var_{X\sim\mu}[f(X)]$ in \Cref{thm:NormalLimits} can consistently be estimated from i.i.d.\ data. Indeed, this holds under Assumption \ref{ass:CostsContinuousAndBounded} combined with \ref{ass:EquicontinuityLocallyCompact} or \ref{ass:EquicontinuityCompact}, leading to the following pivotal limit law. 

\begin{corollary}[Pivotal limit law]\label{cor:invPrincipleOT}
If the unique Kantorovich potential $f\in S_{c}(\mu,\nu)$ in \Cref{thm:NormalLimits} is not constant $\mu$-almost surely, then, for $n\to\infty$ and any $f_n\in S_{c}(\hat{\mu}_n,\nu)$,
\begin{equation}\label{eq:invPrincipleOT}
     \sqrt{n}\, \frac{\OT_c(\hat \mu_n,\nu) - \OT_c(\mu,\nu)}{\sqrt{\Var_{X\sim\hat{\mu}_n}[f_n(X)]}}  \konvD \frac{\GG_\mu(f)}{\sqrt{\Var_{X\sim\mu}[f(X)]}} \sim \normal\left(0, 1\right).
\end{equation}
Analogous statements hold for the weak limits in \eqref{eq:GeneralCLT_OneSampleNu_normal} if $f\powerC\in S_c\powerC(\mu, \nu)$ is not constant \qquad $\nu$-almost surely and in \eqref{eq:GeneralCLT_TwoSample_normal} if $f$ or $f\powerC$ is not constant $\mu$- respectively $\nu$-almost surely. 
\end{corollary}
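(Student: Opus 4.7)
The plan is to apply Slutsky's lemma: \Cref{thm:NormalLimits}(i) already handles the numerator, so the corollary reduces to showing the consistency $\hat\sigma_n^2 \coloneqq \Var_{X \sim \hat\mu_n}[f_n(X)] \konvP \sigma^2 \coloneqq \Var_{X \sim \mu}[f(X)]$. The assumption that $f$ is not $\mu$-a.s.\ constant gives $\sigma > 0$, so this consistency together with the continuous mapping theorem for $\sqrt{\cdot}$ and $1/\cdot$ on $(0,\infty)$ yields \eqref{eq:invPrincipleOT}. The main obstacle is that $f_n$ is a \emph{random} Kantorovich potential in $\FC_c$ that need not converge to $f$ in any pathwise sense, and uniqueness of potentials is only guaranteed up to additive constants.

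For the consistency I would argue along subsequences. Fix any subsequence. The same Arzel\`a--Ascoli step used in Step~3 of the proof of \Cref{thm:OTHadamardDiff}---applicable because $\FC_c$ is uniformly bounded by $\norm{c}_\infty$ and equicontinuous under \ref{ass:EquicontinuityLocallyCompact} or \ref{ass:EquicontinuityCompact}---produces a further subsequence $(f_{n_j})$ converging uniformly on compact subsets of $\XC$ to some continuous $f_\infty$. The stability argument from Step~4 of that proof then identifies $f_\infty \in S_c(\mu,\nu)$: one passes to the limit in the optimality identity $\hat\mu_{n_j}(f_{n_j}) + \nu(f_{n_j}\powerC) = \OT_c(\hat\mu_{n_j},\nu)$ using $\OT_c(\hat\mu_{n_j},\nu) \to \OT_c(\mu,\nu)$ and dominated convergence. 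Uniqueness then forces $f_\infty = f + a$ on $\supp(\mu)$ for some $a \in \R$.

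For the variance itself, expand $\hat\sigma_n^2 = \hat\mu_n(f_n^2) - (\hat\mu_n(f_n))^2$. Being $\mu$-Donsker, $\FC_c$ is $\mu$-Glivenko--Cantelli; uniform boundedness of $\FC_c$ makes the squared class $\{g^2\colon g\in\FC_c\}$ a $2\norm{c}_\infty$-Lipschitz image and therefore $\mu$-Glivenko--Cantelli as well. On the resulting probability-one event, $|\hat\mu_n(f_n) - \mu(f_n)|$ and $|\hat\mu_n(f_n^2) - \mu(f_n^2)|$ both tend to zero. Along the subsequence above, bounded pointwise convergence $f_{n_j} \to f_\infty$ on $\supp(\mu)$ yields by dominated convergence $\mu(f_{n_j}) \to \mu(f_\infty)$ and $\mu(f_{n_j}^2) \to \mu(f_\infty^2)$, so $\hat\sigma_{n_j}^2 \to \Var_\mu[f_\infty] = \Var_\mu[f+a] = \sigma^2$ by shift invariance. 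Since the limit $\sigma^2$ is independent of the chosen subsequence, a pathwise application of the subsequence principle upgrades this to $\hat\sigma_n^2 \to \sigma^2$ almost surely, hence in probability.

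Cases (ii) and (iii) follow by identical arguments: in (ii) the $c$-conjugate class $\FC_c\powerC$ (which is $\nu$-Donsker and uniformly bounded) plays the role of $\FC_c$; in (iii) the two variance consistencies decouple by independence of the samples, and combining them with the joint weak convergence already used in the proof of \Cref{thm:GeneralCLT}(iii) delivers the pivotal statement via Slutsky's lemma.
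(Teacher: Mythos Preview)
Your proposal is correct and follows essentially the same route as the paper: Slutsky reduces the claim to variance consistency, which is obtained via the Arzel\`a--Ascoli subsequence argument of Steps~3--4 of \Cref{thm:OTHadamardDiff} together with the Glivenko--Cantelli/Donsker property of $\FC_c$ and its squared class, and shift-invariance of the variance under uniqueness. The only cosmetic difference is that the paper first observes the inclusion $S_c(\hat\mu_n,\nu) \subseteq S_c(\mu,\nu, 2\norm{\hat\mu_n-\mu}_{\FC_c})$, which places the $f_n$ directly into the $\epsilon$-approximizer framework of those steps and spares you the separate limit passage in the empirical optimality identity.
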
 

\begin{proof}[Proof of \Cref{cor:invPrincipleOT}]
We only show the claim for \eqref{eq:invPrincipleOT},  the corresponding pivotal limits for \eqref{eq:GeneralCLT_OneSampleNu_normal} and \eqref{eq:GeneralCLT_TwoSample_normal} follow analogously. In view of \Cref{thm:NormalLimits} and Slutzky's lemma it suffices to show for $f_n \in S_c(\hat\mu_n, \nu)$ that $\Var_{X\sim\hat{\mu}_{n}}[f_{n}(X)]$ converges almost surely for $n \rightarrow \infty$ to $\Var_{X\sim\mu}[f(X)]>0$ for  $f \in S_c(\mu, \nu)$. Note that $S_c(\hat\mu_n, \nu) \subseteq S_c(\mu, \nu, 2 \norm{\hat\mu_n - \mu}_{\FC_c})$ where $\norm{\hat\mu_n - \mu}_{\FC_c}$ tends to zero almost surely since $\FC_c$ is $\mu$-Donsker. Hence, by steps three and four of the proof for \Cref{thm:OTHadamardDiff}, it follows that there exists a subsequence $(f_{n_k})_{k \in \N}$ such that $(f_{n_k}, f_{n_k}\powerC)$ converges pointwise on $\supp(\mu)\times \supp(\nu)$ for $k \rightarrow \infty$ to $(h + a, h\powerC -a)$ for some $h \in S_c(\mu, \nu)$ and some $a \in \R$. Since $\FC_c$ is uniformly bounded by $\norm{c}_\infty$, and since $\FC_c$ as well as the element-wise squared function class $\FC_c^2 =\{f^2 \colon f \in \FC_c\}$ are both $\mu$-Donsker \cite[Theorem 2.10.6]{van1996weak}, we conclude that $\Var_{X\sim\hat{\mu}_{n_k}}[f_{n_k}(X)] \rightarrow \Var_{X\sim\mu}[h(X)]$ almost surely for $k \rightarrow \infty$. Finally, by almost sure uniqueness of Kantorovich potentials it holds that $\Var_{X\sim\mu}[h(X)] = \Var_{X\sim\mu}[f(X)]$.
\end{proof}

\paragraph*{Uniqueness of Kantorovich potentials}
The recent work by \cite{staudt2021uniqueness} provides a detailed account on uniqueness guarantees of Kantorovich potentials. The gist of their article is that uniqueness in settings with differentiable costs will commonly hold, and that non-differentiabilities, erratic mass placement, or specific degeneracies in disconnected spaces have to be assumed for uniqueness to fail.

In the connected setting, one relies on the fact that the gradient of a Kantorovich potential at some point $x$ is determined by the gradient of $c(\cdot, y)$ at $x$ for any $(x, y) \in \supp(\pi)$, where $\pi$ is an OT plan, i.e., an optimizer of \eqref{eq:OT}. The connectedness of the support of $\mu$ combined with the uniqueness of the gradient then imply uniqueness of the potential. 
This approach was employed by \cite{staudt2021uniqueness} in case of probability measures with possibly unbounded support that assign negligible mass to the boundary of their support and differentiable cost functions that grow sufficiently rapidly. Notably, this encompasses the strictly convex costs considered by \cite{gangbo1996geometry}, for which uniqueness guarantees were also derived by \citet[Theorem 2.4]{delBarrio2021GeneralCosts} and \citet[Theorem B.2]{bernton2021entropic}.
In discrete settings, the theory of linear programming shows Kantorovich potentials to be unique if the measures $\mu$ and $\nu$ are non-degenerate, which (loosely speaking) means that the OT problem cannot be divided into proper sub-problems. \cite{staudt2021uniqueness} show that similar results hold for probability measures with disconnected support on Polish spaces. 

\section{Degenerate Limits under Trivial Kantorovich Potentials}\label{sec:DegenerateLimitLaws}

Another important special case for our CLTs emerges if Kantorovich potentials are not only unique but also constant. The asymptotic variance in \Cref{thm:NormalLimits} is then equal to zero and the limit law degenerates. As for uniqueness, it suffices if Kantorovich potentials are constant in an almost sure sense.

\begin{definition}[Trivial Kantorovich potentials]
  A Kantorovich potential $f\in S_c(\mu, \nu)$ is called \emph{trivial} if it is constant $\mu$-almost surely. The set $S_c(\mu, \nu)$ is said to be trivial if all of its elements are trivial.
 \end{definition}

The same definition applies for conjugated potentials $f^c$ and the set $S_c\powerC(\mu, \nu)$ with respect to $\nu$.
In contrast to uniqueness of Kantorovich potentials, triviality of $S_c(\mu, \nu)$ does not imply the triviality of $S_c^c(\mu, \nu)$, since a $\mu$-almost surely constant $f$ can (and often will) have a $c$-conjugate $f^c$ that is not $\nu$-almost surely constant (see also \Cref{rem:DoubleTrivialPotential} below).
Simple examples with trivial Kantorovich potentials occur if one of the measures $\mu$ or $\nu$ is a Dirac measure or if the cost function is itself constant. More general and interesting settings will be discussed below.

\begin{theorem}[Degenerate limits]\label{thm:DegenerateOTLimits}
Consider Polish spaces $\XC$ and $\YC$ and a cost function $c\colon \XC\times \YC\rightarrow\R_+$ satisfying Assumption \ref{ass:CostsContinuousAndBounded} combined with \ref{ass:EquicontinuityLocallyCompact} or \ref{ass:EquicontinuityCompact}. Let $\mu \in \PC(\XC)$ and $\nu \in \PC(\YC)$ be probability measures. 
 \begin{enumerate}
        \item[$(i)$] (One-sample from $\mu$) Suppose that $\FC_c$ is $\mu$-Donsker. Then, the limit law \eqref{eq:GeneralCLT_OneSampleMu} degenerates to a Dirac measure if and only if  $S_c(\mu,\nu)$ is trivial.
        \item[$(ii)$](One-sample from $\nu$) Suppose that $\FC_c\powerC$ is $\nu$-Donsker. Then, the limit law \eqref{eq:GeneralCLT_OneSampleNu} degenerates to a Dirac measure  if and only if $S_c\powerC(\mu, \nu)$ is trivial.
        \item[$(iii)$](Two-sample) Suppose that $\FC_c$ and $\FC_c\powerC$ are $\mu$- and $\nu$-Donsker, respectively. Then, the limit law \eqref{eq:GeneralCLT_TwoSample} degenerates to a Dirac measure if and only if  $S_c(\mu, \nu)$  and $S_c\powerC(\mu, \nu)$ are both trivial.
    \end{enumerate}
\end{theorem}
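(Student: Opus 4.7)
\textbf{Proof plan for \Cref{thm:DegenerateOTLimits}.}
The plan is to handle all three cases uniformly by exploiting that each limit random variable is a supremum of a \emph{centered} Gaussian process indexed over $S_c(\mu,\nu)$, and invoking the elementary fact that a centered Gaussian $Z \sim \NC(0,\sigma^2)$ bounded above almost surely by a finite constant necessarily has $\sigma^2 = 0$. Throughout, I would use that $\mathrm{Var}\,\GG_\mu(f) = \Var_{X\sim\mu}[f(X)]$ and $\mathrm{Var}\,\GG_\nu(f\powerC) = \Var_{Y\sim\nu}[f\powerC(Y)]$ by \eqref{eq:introcovariance}, so that triviality of $f$ (resp.\ $f\powerC$) in the sense of \Cref{sec:DegenerateLimitLaws} is equivalent to $\GG_\mu(f)=0$ a.s.\ (resp.\ $\GG_\nu(f\powerC)=0$ a.s.).

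For the ``if'' direction in case $(i)$, suppose $S_c(\mu,\nu)$ is trivial. Then by continuity of Kantorovich potentials on $\supp(\mu)$ (\Cref{lem:ContinuityOfKantPotentials} together with \Cref{def:UniquePotentials}-style reasoning), every $f\in S_c(\mu,\nu)$ is constant on $\supp(\mu)$, so $\mathrm{Cov}(\GG_\mu(f_1),\GG_\mu(f_2)) = 0$ for all $f_1,f_2\in S_c(\mu,\nu)$. The restriction of the tight Gaussian process $\GG_\mu$ to $S_c(\mu,\nu)$ is therefore a degenerate Gaussian process with vanishing covariance; by separability of tight Gaussian processes it coincides almost surely with the zero process, whence $\sup_{f\in S_c(\mu,\nu)}\GG_\mu(f) = 0$ a.s. Cases $(ii)$ and $(iii)$ are analogous, using in the two-sample case that $\sqrt{\delta}\GG_\mu(f)+\sqrt{1-\delta}\GG_\nu(f\powerC)$ has variance $\delta\Var_\mu[f] + (1-\delta)\Var_\nu[f\powerC]$ by independence.

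For the ``only if'' direction, which is the step requiring the main idea, I argue by contrapositive. Assume the limit law in \eqref{eq:GeneralCLT_OneSampleMu} is a Dirac at some constant $c\in\R$. Fix any $f_0\in S_c(\mu,\nu)$. Since $\GG_\mu(f_0)\le \sup_{f\in S_c(\mu,\nu)}\GG_\mu(f) = c$ almost surely and $\GG_\mu(f_0)\sim\NC(0,\Var_\mu[f_0])$, the one-sided boundedness of a centered Gaussian forces $\Var_\mu[f_0]=0$; moreover this immediately yields $c=0$. Since $f_0$ was arbitrary, $S_c(\mu,\nu)$ is trivial. The analogous argument in case $(ii)$ gives triviality of $S_c\powerC(\mu,\nu)$, and in case $(iii)$ applied to the Gaussian $\sqrt{\delta}\GG_\mu(f_0)+\sqrt{1-\delta}\GG_\nu(f_0\powerC)$ with variance $\delta\Var_\mu[f_0]+(1-\delta)\Var_\nu[f_0\powerC]$, the same one-sided bound forces both variances to vanish (as $\delta\in(0,1)$), yielding simultaneous triviality of $S_c(\mu,\nu)$ and $S_c\powerC(\mu,\nu)$.

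The only genuine subtlety I anticipate is the ``if'' direction: passing from ``each marginal $\GG_\mu(f)$ is a.s.\ zero'' to ``the supremum over $S_c(\mu,\nu)$ is a.s.\ zero'' is not automatic for an uncountable index set. I would resolve this cleanly by noting that tightness of $\GG_\mu$ in $\ellInf{\FC_c}$ yields a version with sample paths uniformly continuous for the intrinsic covariance pseudo-metric $d_\mu(f_1,f_2)^2=\Var_\mu[f_1-f_2]$, on which $S_c(\mu,\nu)$ has diameter zero under triviality; alternatively, a separability argument on a countable $d_\mu$-dense subset suffices. Everything else reduces to the elementary Gaussian fact invoked above.
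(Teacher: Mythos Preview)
Your proof is correct. The ``only if'' direction is essentially identical to the paper's: the paper phrases it as ``the limit distribution stochastically dominates $\NC(0,\Var_{X\sim\mu}[f(X)])$ and hence does not degenerate,'' which is exactly your observation that a centered Gaussian bounded above almost surely must have zero variance.

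The ``if'' direction differs. The paper simply observes that triviality of $S_c(\mu,\nu)$ implies uniqueness in the sense of \Cref{def:UniquePotentials} (any two trivial potentials differ by a $\mu$-a.s.\ constant), and then invokes \Cref{thm:NormalLimits}$(i)$ to obtain $\NC(0,0)=\delta_0$ directly. This bypasses the uncountable-supremum issue entirely, since the work of collapsing the supremum to a single Gaussian marginal has already been absorbed into the proof of \Cref{thm:NormalLimits}. Your route---showing that $S_c(\mu,\nu)$ has zero diameter in the intrinsic pseudo-metric $d_\mu$ and using uniform $d_\mu$-continuity of the tight limit---is more self-contained and perfectly valid, but redoes a piece of what \Cref{thm:NormalLimits} already delivers. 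Either way the argument is short; the paper's version is just one line given the prior theorem.
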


\begin{proof}
We only state the proof for setting $(i)$,  the remaining cases follow analogously.  If~$S_c(\mu, \nu)$ is trivial, then the limit distribution degenerates by \Cref{thm:NormalLimits}$(i)$ to a Dirac measure at zero. In case $S_c(\mu, \nu)$ is not trivial, there exists a Kantorovich potential $f\in S_c(\mu, \nu)$ with $\Var_{X\sim \mu}[f(X)]>0$. As the limit distribution for the one-sample case from $\mu$ stochastically dominates $\NC(0,\Var_{X\sim \mu}[f(X)])$ it does not degenerate to a Dirac measure.
\end{proof}

The existence of trivial Kantorovich potentials $f\in S_c(\mu, \nu)$ is intimately related to the existence of transport plans that act as projections onto the support of $\mu$. To highlight the underlying geometric interpretation, we introduce the following notion of projected measures.

\begin{definition}[Projected measures]\label{def:projectedmeasure}
Let $\mu\in\PC(\XC)$ and $\nu\in\PC(\YC)$ be probability measures. We say that $\mu$ is a \emph{$\nu$-projected measure} (with respect to $c$) and write $\mu\in P_c(\nu)$ if there exists a coupling $\pi\in\Pi(\mu, \nu)$ such that
\begin{equation}\label{eq:ProjectedMeasure}
      c(x, y) = \inf_{x'\in\supp(\mu)}c(x', y)
      \qquad\text{for all}~(x, y)\in\supp(\pi).
\end{equation}
\end{definition}

Analogous definitions apply for $\mu$-projected measures, which we denote by $\nu\in P_c(\mu)$.
It can easily be verified that any coupling $\pi$ satisfying \eqref{eq:ProjectedMeasure} solves the OT problem in \eqref{eq:OT} between $\mu$ and $\nu$. In fact, under continuous costs, it holds that (see \Cref{sec:Proofs} for a proof)
\begin{equation}\label{eq:ProjectedMeasureOT}
  \mu\in P_c(\nu)
  \qquad\Longleftrightarrow\qquad
  \OT_c(\mu, \nu)
  =
  \int_\YC \inf_{x\in\supp(\mu)} c(x, y)\dif\nu(y),
\end{equation}
which can equivalently be used to characterize $\nu$-projected measures.

\begin{theorem}[Existence of trivial Kantorovich potentials]\label{thm:ConstantPotentials}
Consider Polish spaces $\XC$ and $\YC$ and a cost function $c\colon \XC\times \YC\rightarrow\R_+$ satisfying Assumption \ref{ass:CostsContinuousAndBounded} combined with \ref{ass:EquicontinuityLocallyCompact} or \ref{ass:EquicontinuityCompact}, and let $\mu\in \PC(\XC)$ and $\nu\in\PC(\YC)$ be probability measures.
Then, trivial Kantorovich potentials $f\in S_c(\mu, \nu)$ exist if and only if $\mu\in P_c(\nu)$. 
\end{theorem}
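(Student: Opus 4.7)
The plan is to leverage the equivalent formulation $\mu\in P_c(\nu) \Leftrightarrow \OT_c(\mu,\nu) = \int_\YC \phi \dif\nu$ supplied by \eqref{eq:ProjectedMeasureOT}, where $\phi(y) \coloneqq \inf_{x'\in\supp(\mu)} c(x', y)$. Under Assumption \ref{ass:CostsContinuousAndBounded} combined with \ref{ass:EquicontinuityLocallyCompact} or \ref{ass:EquicontinuityCompact}, equicontinuity and boundedness of $c$ render $\phi$ continuous with values in $[0, \lVert c \rVert_\infty]$. I shall freely use that for any $\pi\in\Pi(\mu,\nu)$ the support of $\pi$ lies in $\supp(\mu)\times\YC$, on which $c(x,y)\geq \phi(y)$; integrating yields $\OT_c(\mu,\nu)\geq \nu(\phi)$ unconditionally.

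For the forward direction, I assume $f\in S_c(\mu,\nu)$ is trivial with $f=a$ $\mu$-almost surely, so that $f\equiv a$ on $\supp(\mu)$ by \Cref{lem:ContinuityOfKantPotentials}. The bound $f^c(y)\leq \inf_{x\in\supp(\mu)}[c(x,y)-a] = \phi(y)-a$ then gives $\OT_c(\mu,\nu) = a + \nu(f^c) \leq \nu(\phi)$, and combining with the reverse inequality above forces equality, hence $\mu \in P_c(\nu)$ via \eqref{eq:ProjectedMeasureOT}.

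For the backward direction I construct an explicit trivial Kantorovich potential. Setting $g(y)\coloneqq \phi(y)-\lVert c\rVert_\infty \in [-\lVert c\rVert_\infty, 0]$ and defining $f(x)\coloneqq \inf_{y\in\YC}[c(x,y)-g(y)]$ places $f$ into $\FC_c$ by construction. I will argue that (a) $f\equiv \lVert c\rVert_\infty$ on $\supp(\mu)$, and (b) $f\in S_c(\mu,\nu)$. For (a), the lower bound $f\geq \lVert c\rVert_\infty$ on $\supp(\mu)$ follows at once from $c(x,y)-g(y)=c(x,y)-\phi(y)+\lVert c\rVert_\infty\geq \lVert c\rVert_\infty$; the matching upper bound uses any OT plan $\pi\in\Pi(\mu,\nu)$ witnessing the projection, for which $(x,y)\in\supp(\pi)$ implies $c(x,y) = \phi(y)$ and hence $f(x)\leq c(x,y)-g(y)= \lVert c\rVert_\infty$. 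Since the first-coordinate projection of $\supp(\pi)$ is dense in $\supp(\mu)$ (a consequence of $\pi$ having first marginal $\mu$), and since $f$ is continuous under the equicontinuity assumptions, the identity $f\equiv\lVert c\rVert_\infty$ extends to all of $\supp(\mu)$. For (b), the universal inequality $g^{cc}\geq g$ applied to the relation $f=g^c$ yields the pointwise lower bound $f^c = g^{cc} \geq \phi - \lVert c\rVert_\infty$, while part (a) gives the matching upper bound $f^c(y)\leq \inf_{x\in\supp(\mu)}[c(x,y)-\lVert c\rVert_\infty] = \phi(y)-\lVert c\rVert_\infty$. Hence $f^c\equiv \phi-\lVert c\rVert_\infty$, and $\mu(f)+\nu(f^c)= \lVert c\rVert_\infty + \nu(\phi)-\lVert c\rVert_\infty = \OT_c(\mu,\nu)$ as required.

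The main obstacle I anticipate is part (a) of the backward direction: extending the pointwise upper bound $f\leq \lVert c\rVert_\infty$ from the first-coordinate projection of $\supp(\pi)$ to all of $\supp(\mu)$ relies on continuity of elements of $\FC_c$, which is supplied by Assumption \ref{ass:EquicontinuityLocallyCompact} or \ref{ass:EquicontinuityCompact}. A delicate side point is the verification that this projection is indeed dense in $\supp(\mu)$, which requires a short argument noting that the $\pi$-measure of any open cylinder $U\times\YC$ with $\mu(U)>0$ must charge $\supp(\pi)$.
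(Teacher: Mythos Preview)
Your proof is correct and shares the core idea with the paper's: both directions pivot on the function $\phi(y)=\inf_{x'\in\supp(\mu)}c(x',y)$, and the backward direction in each case amounts to recognizing that a constant together with its conjugate $\phi$ (up to a shift) is optimal. The execution differs in one structural way. The paper first reduces to $\XC=\supp(\mu)$, $\YC=\supp(\nu)$ via \citet[Lemma~3]{staudt2021uniqueness}, after which the backward direction becomes very short: start from $f\equiv 0$, note $f^c=\phi$ (because the infimum now runs only over $\supp(\mu)$), and pass to $f^{cc}$, which is trivial on a set of full $\mu$-measure. You instead work on the full spaces throughout, build the shifted potential $f=(\phi-\|c\|_\infty)^c$ directly as an element of $\FC_c$, and use the density-plus-continuity argument to extend $f\equiv\|c\|_\infty$ from the first-coordinate projection of $\supp(\pi)$ to all of $\supp(\mu)$. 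Your route is more self-contained (no external lemma) and yields an explicit element of $\FC_c$ without a final constant adjustment; the paper's route is shorter once the reduction is granted. The forward directions are equivalent reformulations: the paper verifies Definition~\ref{def:projectedmeasure} directly via $f(x)+f^c(y)=c(x,y)$ on $\supp(\pi)$, while you reach the same conclusion through the integral characterization~\eqref{eq:ProjectedMeasureOT}.
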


\begin{remark}\label{rem:UniquenessGivesSharpness}
  In settings where we can assume unique Kantorovich potentials, which is often not a restrictive condition (see \Cref{sec:NormalLimitLaws}), \Cref{thm:ConstantPotentials} in conjunction with \Cref{thm:DegenerateOTLimits} states that the one-sample limit laws \eqref{eq:GeneralCLT_OneSampleMu} and \eqref{eq:GeneralCLT_OneSampleNu} degenerate if and only if $\mu\in P_c(\nu)$ or $\nu\in P_c(\mu)$, respectively. Furthermore the two-sample limit law \eqref{eq:GeneralCLT_TwoSample} degenerates if and only if $\mu\in P_c(\nu)$ and $\nu\in P_c(\mu)$ (see \Cref{rem:DoubleTrivialPotential} below).
\end{remark}

Intuitively, a $\nu$-projected measure is any measure that can be obtained from $\nu$ by projecting all points of $\supp(\nu)$ to some subset of $\XC$ according to the cost $c$. For example, $\mu\in P_c(\nu)$ always holds if $\mu = p_\#\nu$ for a measurable map $p\colon\supp(\nu) \to\XC$ that obeys
\begin{equation}\label{eq:CProjection}
  p(y) \in \argmin_{x\in\supp(\mu)} c(x, y)
\end{equation}
for each $y\in\supp(\nu)$. We denote maps $p$ that satisfy \eqref{eq:CProjection} for given $\mu$ and $\nu$ as \emph{$c$-projections}. One example of a $c$-projection is illustrated in \Cref{fig:ConstantPotentials}$(a)$.
Based on \Cref{thm:ConstantPotentials}, we next formulate several necessary and sufficient criteria for the existence of trivial Kantorovich potentials, all of which have an intuitive geometric interpretation. 

\begin{figure}
  \begin{center}\small
  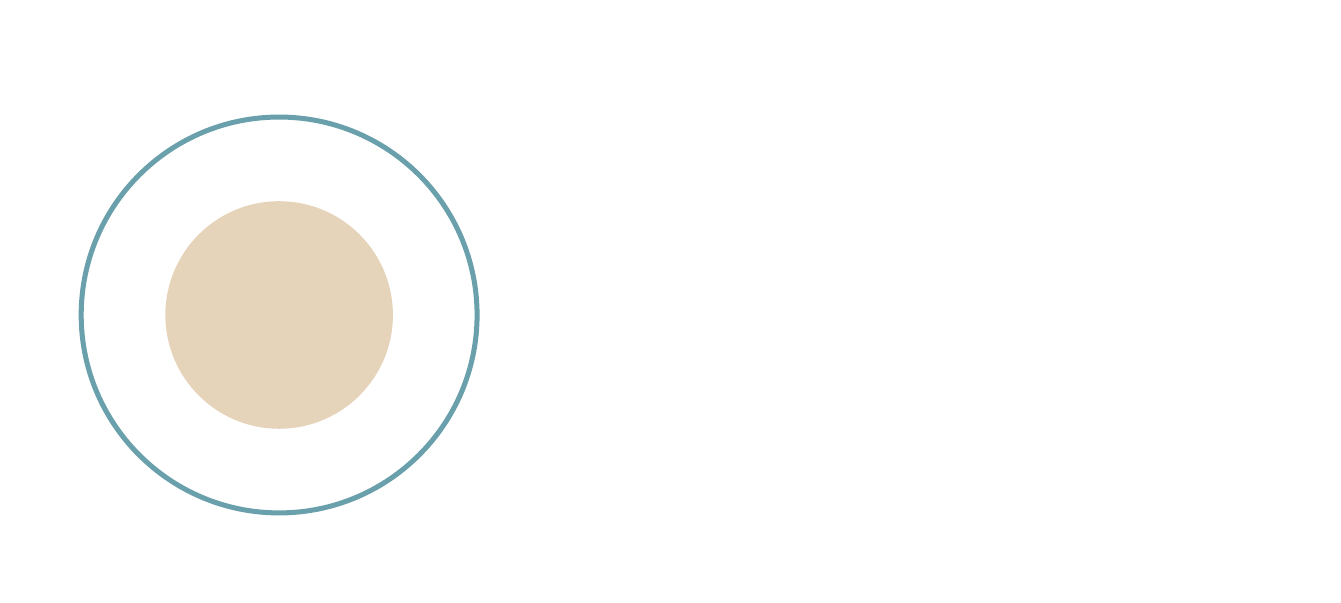
    \vspace{-2ex}
  \end{center}
  \caption{Trivial Kantorovich potentials. In \textbf{(a)}, $\mu$ is equal to the projection of $\nu$ onto the set $\tilde\XC = \partial B_2$, the boundary of an Euclidean ball. According to \Cref{cor:ConstantPotentials1} $(ii)$, there exists a Kantorovich potential $f$ that is constant on $\partial B_2$. Furthermore, due to \citet[Corollary~2]{staudt2021uniqueness}, the constant potential is also unique. In \textbf{(b)}, the projection $\Gamma_c= \Gamma_c(\mu, \nu)$ of all points $y\in\supp(\nu)$ onto the support of $\mu$ under Euclidean costs is marked by black segments that are part of the boundary of $\supp(\mu)$. Since $\supp(\mu)$ is not contained in $\Gamma_c$, \Cref{cor:ConstantPotentials2} $(iii)$ states that there cannot be a Kantorovich potential $f$ that is constant on $\supp(\mu)$.}
  \label{fig:ConstantPotentials}
\end{figure}

\begin{corollary}\label{cor:ConstantPotentials1}
 Under the assumptions of \Cref{thm:ConstantPotentials}, the set $S_c(\mu, \nu)$ contains trivial Kantorovich potentials if one of the following conditions is satisfied.
  \begin{enumerate}
    \item[$(i)$] $\mu = \nu$ and $c(x, x) = 0$ holds for all $x\in\XC$,
    \item[$(ii)$] $\mu = p_{\#}\nu$ for a $c$-projection $p\,\colon\supp(\nu)\to \tilde\XC$ onto a closed set $\tilde\XC\subset \XC$.
  \end{enumerate}
 \end{corollary}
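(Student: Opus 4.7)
The plan is to invoke \Cref{thm:ConstantPotentials} in both cases, which reduces the problem to constructing, in each setting, an explicit coupling $\pi\in\Pi(\mu,\nu)$ verifying the projection identity \eqref{eq:ProjectedMeasure}; the existence of a trivial Kantorovich potential then follows immediately.

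For case $(i)$, I would take the diagonal coupling $\pi \coloneqq (\id,\id)_{\#}\mu \in \Pi(\mu,\mu)$, whose support is contained in $\{(x,x)\mid x\in\supp(\mu)\}$. For any $(x,x)\in\supp(\pi)$, non-negativity of $c$ combined with $c(x,x)=0$ yields
\begin{equation*}
  c(x,x) = 0 = \inf_{x'\in\supp(\mu)} c(x',x),
\end{equation*}
so \eqref{eq:ProjectedMeasure} holds and $\mu\in P_c(\nu)$. For case $(ii)$, the natural candidate is $\pi \coloneqq (p,\id)_{\#}\nu$. Measurability of $p$ (implicit in the pushforward $\mu=p_{\#}\nu$ being well-defined) ensures that $\pi$ is a probability measure with marginals $p_{\#}\nu=\mu$ and $\nu$. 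On the graph $\{(p(y),y)\mid y\in\supp(\nu)\}$, the $c$-projection property \eqref{eq:CProjection} gives directly that $c(p(y),y)=\inf_{x'\in\supp(\mu)}c(x',y)$, so the identity \eqref{eq:ProjectedMeasure} is satisfied on this set.

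The principal technical obstacle is passing from the graph of $p$ to the support of $\pi$, since $\supp(\pi)$ is only contained in the \emph{closure} $\closure\{(p(y),y)\mid y\in\supp(\nu)\}$ (the map $p$ need not be continuous). I would resolve this using continuity of $c$ from Assumption \ref{ass:CostsContinuousAndBounded} together with the fact that $y\mapsto \inf_{x'\in\supp(\mu)} c(x',y)$ is continuous under \ref{ass:EquicontinuityLocallyCompact}/\ref{ass:EquicontinuityCompact} (being the infimum of an equicontinuous family of continuous functions, and using that $\supp(\mu)\subseteq \tilde\XC$ is closed). Passing to a limit in $c(p(y_n),y_n)=\inf_{x'}c(x',y_n)$ along approximating sequences, and using closedness of $\supp(\mu)$ to keep the limit point $x$ of $p(y_n)$ inside $\supp(\mu)$, extends \eqref{eq:ProjectedMeasure} to all of $\supp(\pi)$. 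This confirms $\mu\in P_c(\nu)$ in case $(ii)$, completing the proof.
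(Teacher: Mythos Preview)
Your proof is correct and aligns with the paper's approach: both invoke \Cref{thm:ConstantPotentials} after exhibiting the same couplings, namely $(\id,\id)_{\#}\mu$ in case~$(i)$ and $(p,\id)_{\#}\nu$ in case~$(ii)$. The one difference is in case~$(ii)$: rather than verifying \eqref{eq:ProjectedMeasure} on $\supp(\pi)$ via your closure argument, the paper instead checks the equivalent integral characterization \eqref{eq:ProjectedMeasureOT}, computing $\OT_c(\mu,\nu)=\int c\,\dif\pi=\int c(p(y),y)\,\dif\nu(y)=\int\inf_{x'\in\supp(\mu)}c(x',y)\,\dif\nu(y)$ directly from the $c$-projection property. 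This bypasses the support/closure issue entirely (that work is already absorbed into the proof of \eqref{eq:ProjectedMeasureOT}). Your route is slightly more self-contained but longer; the paper's is a one-liner once \eqref{eq:ProjectedMeasureOT} is available.
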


\begin{corollary}
\label{cor:ConstantPotentials2}
 Under the assumptions of \Cref{thm:ConstantPotentials}, the set $S_c(\mu, \nu)$ does not contain trivial Kantorovich potentials if one of the following conditions is satisfied.
  \begin{enumerate}
    \item[$(i)$] $\mu\neq\nu$ while $\supp(\nu) \subset \supp(\mu)$ with $c(x, x) = 0$ and $c(x, y) > 0$ for all $x\neq y$,
    \item[$(ii)$] $\mathrm{int}(\mathrm{supp}(\mu))\nsubseteq \mathrm{supp}(\nu)$ for subsets $\XC,\YC\subset V$ of a normed linear space $(V, \|\cdot\|)$ with cost $c(x, y) = h(\|x - y\|)$ for strictly increasing $h$,
    \item[$(iii)$] $\supp(\mu)$ is not equal to the topological closure $\Gamma_c(\mu, \nu) \subset\XC$ of the set
      \begin{equation*}
        \bigcup_{y\in\supp(\nu)} \argmin_{x\in\supp(\mu)} c(x, y).
      \end{equation*}
  \end{enumerate}
In each of these settings, the limit law \eqref{eq:GeneralCLT_OneSampleMu} in \Cref{thm:GeneralCLT} is non-degenerate.
\end{corollary}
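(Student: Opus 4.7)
The plan is to reduce each of the three cases to showing $\mu\notin P_c(\nu)$, after which the absence of trivial Kantorovich potentials in $S_c(\mu,\nu)$ follows immediately from \Cref{thm:ConstantPotentials}. For the concluding assertion on non-degeneracy of \eqref{eq:GeneralCLT_OneSampleMu}, observe that $S_c(\mu,\nu)$ is nonempty under our standing assumptions; hence the absence of any trivial element means $S_c(\mu,\nu)$ is not trivial, so \Cref{thm:DegenerateOTLimits}$(i)$ yields non-degeneracy.

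The centerpiece is $(iii)$, which I would attack directly via \Cref{def:projectedmeasure}. Assume for contradiction that $\mu\in P_c(\nu)$ and let $\pi\in\Pi(\mu,\nu)$ satisfy \eqref{eq:ProjectedMeasure}. Setting $\tilde\Gamma\coloneqq\bigcup_{y\in\supp(\nu)}\argmin_{x\in\supp(\mu)}c(x,y)$, the projection condition forces the first-coordinate projection of $\supp(\pi)$ into $\tilde\Gamma$. Combined with the Polish-space identity $\supp(\mu)=\overline{\{x\in\XC\mid\exists\,y\in\YC,\,(x,y)\in\supp(\pi)\}}$, this yields $\supp(\mu)\subseteq\overline{\tilde\Gamma}=\Gamma_c(\mu,\nu)$. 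The reverse inclusion $\Gamma_c(\mu,\nu)\subseteq\supp(\mu)$ is immediate since $\tilde\Gamma\subseteq\supp(\mu)$ and $\supp(\mu)$ is closed. Thus $\supp(\mu)=\Gamma_c(\mu,\nu)$, contradicting the hypothesis of $(iii)$.

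For $(ii)$ I would reduce to $(iii)$ by a direct geometric construction. Pick $x_0\in\mathrm{int}(\supp(\mu))\setminus\supp(\nu)$ and choose $\epsilon>0$ so small that $B_\epsilon(x_0)\subseteq\supp(\mu)$ and $B_\epsilon(x_0)\cap\supp(\nu)=\emptyset$; the latter is possible since $\supp(\nu)$ is closed. For any $x'\in B_{\epsilon/2}(x_0)$ and any $y\in\supp(\nu)$, nudging $x'$ slightly toward $y$ produces a point $x''\in B_\epsilon(x_0)\subseteq\supp(\mu)$ with $\|x''-y\|<\|x'-y\|$, so that $c(x'',y)<c(x',y)$ by strict monotonicity of $h$, whence $x'\notin\argmin_{x\in\supp(\mu)}c(x,y)$. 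Thus $B_{\epsilon/2}(x_0)$ is disjoint from $\tilde\Gamma$ and, being open, also from $\Gamma_c(\mu,\nu)$. Hence $x_0\in\supp(\mu)\setminus\Gamma_c(\mu,\nu)$ and $(iii)$ applies. For $(i)$ I would invoke \eqref{eq:ProjectedMeasureOT}: if $\mu\in P_c(\nu)$ then $\OT_c(\mu,\nu)=\int\inf_{x\in\supp(\mu)}c(x,y)\dif\nu(y)=0$, because every $y\in\supp(\nu)\subseteq\supp(\mu)$ attains the infimum at $x=y$ with value $c(y,y)=0$. Any OT plan is then concentrated on $\{c=0\}=\{(x,y):x=y\}$, forcing $\mu=\nu$ and contradicting the hypothesis.

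The only subtle point I expect is the Polish-space identity used in $(iii)$. I would justify it by noting that every neighborhood $U$ of a point $x\in\supp(\mu)$ satisfies $\pi(U\times\YC)=\mu(U)>0$, so $U\times\YC$ meets $\supp(\pi)$; conversely, if $(x',y')\in\supp(\pi)$ then any neighborhood $U$ of $x'$ gives $\mu(U)\geq\pi(U\times\YC)>0$, placing $x'$ in $\supp(\mu)$. The remaining reductions are then purely geometric.
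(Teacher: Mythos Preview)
Your proof is correct and follows essentially the same approach as the paper: both reduce to showing $\mu\notin P_c(\nu)$ via \Cref{thm:ConstantPotentials}, both hinge on the density of the first-coordinate projection of $\supp(\pi)$ in $\supp(\mu)$, and both handle $(i)$ through the characterization \eqref{eq:ProjectedMeasureOT}. The one organizational difference is that you reduce $(ii)$ to $(iii)$ by exhibiting a point in $\supp(\mu)\setminus\Gamma_c(\mu,\nu)$, whereas the paper treats $(ii)$ and $(iii)$ independently with parallel direct arguments; your reduction is a mild streamlining but not a substantively different idea.
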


Exchanging the roles of $\mu$ and $\nu$, \Cref{thm:ConstantPotentials} as well as Corollaries \ref{cor:ConstantPotentials1} and \ref{cor:ConstantPotentials2} can equally be applied to $f^c$. Examples illustrating \Cref{cor:ConstantPotentials1} $(ii)$ and \Cref{cor:ConstantPotentials2} $(iii)$ in Euclidean settings are provided in \Cref{fig:ConstantPotentials}. In particular, \Cref{fig:ConstantPotentials}$(a)$ highlights that there is a crucial difference between demanding constant Kantorovich potentials on all of $\XC$ and only demanding them to be constant almost surely. Indeed, if $f\in S_c(\mu, \nu)$ was constant on the whole space $\XC$, then $f^c\in S_c^c(\mu, \nu)$ would be constant as well, which is by \Cref{thm:ConstantPotentials} not the case in \Cref{fig:ConstantPotentials}$(a)$.
This setting also serves as an example where $\mu\neq\nu$ and where every Kantorovich potential $f\in S_c(\mu, \nu)$ is trivial while $f^c$ is not.

\begin{remark}[Bi-triviality]\label{rem:DoubleTrivialPotential}
  OT problems where both $f\in S_c(\mu, \nu)$ and its conjugate $f^c$ are trivial have a special underlying geometry. If $f$ is constant on $\supp(\mu)$ and $f^c$ is constant on $\supp(\nu)$, then the optimality condition $f(x) + f^c(y) = c(x, y)$ for points $(x, y)$ on the support of any optimal transport plan $\pi$ implies that $c$ is constant on $\supp(\pi)$, i.e., all points are transported with the same cost. For an example, consider radially symmetric costs and uniform distributions $\mu$ and $\nu$ on centered Euclidean spheres of positive radius. For different radii, the measures are distinct and both Kantorovich potentials are trivial.
\end{remark}

\section{Examples}\label{sec:LLspecificinstance}

Elaborating the main result in \Cref{thm:GeneralCLT}, we focus in this section on concrete settings. The general setup requires Assumption \ref{ass:CostsContinuousAndBounded} combined with \ref{ass:EquicontinuityLocallyCompact} or \ref{ass:EquicontinuityCompact} to hold. It remains to verify the Donsker properties for the function classes $\FC_c$ and $\FC_c^c$ which then leads to CLTs for the empirical OT cost. 

\subsection{Countable Discrete Spaces}\label{subsec:countable}

Our general theory leads to CLTs for the OT cost on countable discrete spaces $\XC$ and $\YC$ equipped with discrete topology. More precisely, for a non-negative cost function $c\colon \XC\times \YC \to \mathbb{R}_+$ bounded by some constant $\Vert c \Vert_\infty<\infty$, Assumptions \ref{ass:CostsContinuousAndBounded} and~\ref{ass:EquicontinuityLocallyCompact} hold trivially. To prove that the function class $\FC_c$ is $\mu$-Donsker, we define $\FC_c\mathbbm{1}_{x}$ as the restriction of $\FC_c$ to a fixed element $x\in\XC$. Notably, each element in the latter function class is bounded by $\Vert c \Vert_\infty$ and we obtain
\begin{equation*}
    \mathbb{E}\left[ \Vert \sqrt{n}\left(\hat{\mu}_n -\mu \right) \Vert_{\FC_c\mathbbm{1}_{x}} \right] \lesssim_{\Vert c \Vert_\infty} \sqrt{\mu(x)}.
\end{equation*}
According to \cite[Theorem 2.10.24]{van1996weak}, we deduce that the function class $\FC_c$ is $\mu$-Donsker if 
\begin{equation}\label{eq:BorisovDurst}
    \sum_{x\in \XC} \sqrt{\mu(x)}<\infty
\end{equation}
which is the celebrated \emph{Borisov-Dudley-Durst} condition \citep{dudley1999uniform}. Similarly, the function class $\FC_c^c$ is $\nu$-Donsker if $\sum_{y\in \YC} \sqrt{\nu(y)}<\infty$.

\begin{corollary}[Countable discrete spaces, \citealt{tameling18}]\label{cor:countableCLTs}
Let $\XC$ and $\YC$ be countable discrete spaces and $c\colon \XC\times \YC \to \mathbb{R}_+$ a bounded cost function. Consider probability measures $\mu\in \PC(\XC)$ and $\nu \in \PC(\YC)$. If $\mu$ fulfills the Borisov-Dudley-Durst condition \eqref{eq:BorisovDurst}, then the CLT in \eqref{eq:GeneralCLT_OneSampleMu} is valid. If $\nu$ fulfills \eqref{eq:BorisovDurst}, then \eqref{eq:GeneralCLT_OneSampleNu} holds. In case both $\mu$ and $\nu$ fulfill \eqref{eq:BorisovDurst}, then \eqref{eq:GeneralCLT_TwoSample} holds.
\end{corollary}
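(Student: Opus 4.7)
The plan is to verify the hypotheses of \Cref{thm:GeneralCLT} and then invoke its three parts directly; indeed, the discussion preceding the corollary statement has already done most of the heavy lifting. First I would confirm that Assumption \ref{ass:CostsContinuousAndBounded} and Assumption \ref{ass:EquicontinuityLocallyCompact} hold: under the discrete topology on $\XC$ and $\YC$ every function is continuous, so the boundedness of $c$ suffices for \ref{ass:CostsContinuousAndBounded}; and a countable discrete space is locally compact (each singleton is a compact neighborhood of itself), while equicontinuity of $\{c(\cdot,y)\mid y\in\YC\}$ and $\{c(x,\cdot)\mid x\in\XC\}$ with respect to the discrete metric is automatic, since any $\delta<1$ forces $d(x,x')<\delta$ to entail $x=x'$, so that $|c(x,y)-c(x',y)|=0$ for every $y$, uniformly.

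Next I would verify the Donsker property. As noted in the excerpt, $\FC_c$ decomposes as a countable pointwise sum, each $f\in\FC_c$ being uniformly bounded by $\norm{c}_\infty$; restricting to the class $\FC_c\mathbbm{1}_x$ one bounds
\begin{equation*}
    \mathbb{E}\bigl[\,\lVert\sqrt{n}(\hat\mu_n-\mu)\rVert_{\FC_c\mathbbm{1}_x}\bigr]
    \leq \norm{c}_\infty\,\mathbb{E}\bigl|\sqrt{n}(\hat\mu_n(x)-\mu(x))\bigr|
    \lesssim_{\norm{c}_\infty}\sqrt{\mu(x)},
\end{equation*}
using the variance bound $\Var(\hat\mu_n(x))=\mu(x)(1-\mu(x))/n$. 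Invoking \citet[Theorem 2.10.24]{van1996weak} on countable sums of Donsker classes yields the $\mu$-Donsker property of $\FC_c$ whenever $\sum_{x\in\XC}\sqrt{\mu(x)}<\infty$. The same reasoning applies to the conjugate class $\FC_c^c$, which is likewise uniformly bounded by $\norm{c}_\infty$ (recall the parametrization \eqref{eq:OTFunctionclassLarge} gives $-\norm{c}_\infty\leq g\leq 0$), so $\FC_c^c$ is $\nu$-Donsker under $\sum_{y\in\YC}\sqrt{\nu(y)}<\infty$.

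With both assumptions and the relevant Donsker properties in hand, parts (i), (ii), and (iii) of \Cref{thm:GeneralCLT} deliver the three claimed CLTs, respectively. There is no genuine obstacle here: the only point that requires any care is the bookkeeping inside \citet[Theorem~2.10.24]{van1996weak}, where one must view $\FC_c$ as embedded in the countable union $\bigcup_{x\in\XC}\FC_c\mathbbm{1}_x$ of manageable subclasses and invoke the summability criterion; the measurability/separability conditions are automatic on a countable discrete space.
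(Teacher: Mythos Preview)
Your proposal is correct and follows essentially the same route as the paper: verify Assumptions \ref{ass:CostsContinuousAndBounded} and \ref{ass:EquicontinuityLocallyCompact} trivially via the discrete topology, then establish the $\mu$-Donsker property of $\FC_c$ (and the $\nu$-Donsker property of $\FC_c^c$) by partitioning into singletons, bounding $\mathbb{E}\bigl[\lVert\sqrt{n}(\hat\mu_n-\mu)\rVert_{\FC_c\mathbbm{1}_x}\bigr]\lesssim_{\norm{c}_\infty}\sqrt{\mu(x)}$, and invoking \citet[Theorem~2.10.24]{van1996weak} under the Borisov--Dudley--Durst summability condition, before applying \Cref{thm:GeneralCLT}. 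Your write-up even supplies a few details (the explicit variance bound, the $\delta<1$ argument for equicontinuity) that the paper leaves implicit.
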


\subsection{The One-dimensional Euclidean Space}\label{subsec:d123}

From \Cref{thm:GeneralCLT} we immediately derive CLTs for the empirical OT cost between probability measures supported on the the real line $\RR$ and sufficiently regular cost function. The proofs require the notion of covering and metric entropy for a real-valued function class $\FC$ defined on $\XC$ as introduced in the notation. Based on metric entropy bounds, empirical process theory provides tools to assess if a given function class is $\mu$-Donsker or even universal Donsker \cite[Section 2.5]{van1996weak}. We first focus on the real line.

\begin{theorem}[$d=1$]\label{cor:CLT_RealLine}
Consider the Euclidean space $\R$ with cost $c\colon \R \times \R \rightarrow \R_+$ assumed to be bounded and $(\alpha,L)$-H\"older for $\alpha\in(1/2, 1]$ and $L\geq 0$, i.e., 
\begin{equation}\label{eq:HoelderCosts}
    \left|c(x,y) - c(x',y')\right| \leq L\left(\left|x-x'\right|^\alpha +\left|y-y'\right|^\alpha\right) \quad \forall x,x', y,y' \in \R. 
\end{equation}
If the probability measure $\mu\in\PC(\R)$ fulfills 
\begin{equation}\label{eq:BorisovDurstRealLine}
\sum_{k \in \mathbb{Z}} \sqrt{\mu\left([k,k+1)\right)} < \infty,
\end{equation}
then the CLT in \eqref{eq:GeneralCLT_OneSampleMu} is valid. If $\nu\in\PC(\R)$ fulfills \eqref{eq:BorisovDurstRealLine}, then \eqref{eq:GeneralCLT_OneSampleNu} holds. In case both $\mu$ and $\nu$ fulfill \eqref{eq:BorisovDurstRealLine}, then \eqref{eq:GeneralCLT_TwoSample} holds.
\end{theorem}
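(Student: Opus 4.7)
The plan is to apply the master CLT \Cref{thm:GeneralCLT}, which requires verifying Assumption \ref{ass:CostsContinuousAndBounded} together with \ref{ass:EquicontinuityLocallyCompact}, and the $\mu$-Donsker property of $\FC_c$ (respectively the $\nu$-Donsker property of $\FC_c\powerC$). Boundedness and H\"older continuity of $c$ immediately give Assumption \ref{ass:CostsContinuousAndBounded}, and the uniform H\"older modulus $L\delta^\alpha$ in each argument separately furnishes a common modulus of continuity for the families $\{c(\cdot, y)\mid y\in\R\}$ and $\{c(x,\cdot)\mid x\in\R\}$. As $\R$ is locally compact, Assumption \ref{ass:EquicontinuityLocallyCompact} is satisfied.

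Since every $f\in\FC_c$ is the pointwise infimum $\inf_{y}\bigl(c(\cdot,y)-g(y)\bigr)$ of functions all sharing the $(\alpha, L)$-H\"older modulus in $x$, the entire class $\FC_c$ is contained in the ball
\begin{equation*}
\mathcal{H}_{\alpha,L,M}=\bigl\{h\colon\R\to\R \,:\, \|h\|_\infty\le M,\ |h(x)-h(x')|\le L|x-x'|^\alpha\bigr\}
\end{equation*}
with $M=2\|c\|_\infty$, and the analogous statement holds for $\FC_c\powerC$. It therefore suffices to show that $\mathcal{H}_{\alpha,L,M}$ is $\mu$-Donsker under \eqref{eq:BorisovDurstRealLine}.

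For this core step I would partition $\R$ into unit intervals $I_k = [k, k+1)$ for $k\in\Z$ and exploit on each $I_k$ the classical H\"older bracketing bound \cite[Theorem 2.7.1]{van1996weak},
\begin{equation*}
\log N_{[]}\bigl(\delta, \mathcal{H}_{\alpha, L, M}|_{I_k}, \|\cdot\|_\infty\bigr)\lesssim_{\alpha, L, M}\delta^{-1/\alpha}.
\end{equation*}
Multiplying such an $L^\infty$-bracket of width $\delta$ by $\mathbbm{1}_{I_k}$ produces an $L^2(\mu)$-bracket of width at most $\delta\sqrt{\mu(I_k)}$ for $\mathcal{H}_{\alpha,L,M}\mathbbm{1}_{I_k}$. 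Glueing one bracket per interval, with the trivial envelope bracket used on those intervals too small to warrant further resolution and with the scales $\delta_k$ tuned to balance the precision against $\sqrt{\mu(I_k)}$, produces global $L^2(\mu)$-brackets for $\mathcal H_{\alpha, L, M}$. The summability \eqref{eq:BorisovDurstRealLine} controls the aggregation across the countably many intervals, mirroring the Borisov--Dudley--Durst argument of \Cref{cor:countableCLTs}, while the hypothesis $\alpha > 1/2$ is the Dudley threshold that makes the local H\"older entropy $\delta^{-1/\alpha}$ integrable under a square root. Together, these give a finite bracketing integral
\begin{equation*}
\int_0^\infty\sqrt{\log N_{[]}\bigl(\epsilon, \mathcal H_{\alpha,L,M}, L^2(\mu)\bigr)}\,\mathrm d\epsilon,
\end{equation*}
so that the bracketing CLT \cite[Theorem 2.5.6]{van1996weak} yields the $\mu$-Donsker property of $\mathcal H_{\alpha, L, M}$ and hence of $\FC_c$.

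The same argument applied with $(\mu, \FC_c)$ replaced by $(\nu, \FC_c\powerC)$ produces the conjugate Donsker property under the matching summability on $\nu$. Inserting these into parts $(i)$--$(iii)$ of \Cref{thm:GeneralCLT} delivers \eqref{eq:GeneralCLT_OneSampleMu}, \eqref{eq:GeneralCLT_OneSampleNu} and \eqref{eq:GeneralCLT_TwoSample}. The main obstacle is the partition-and-glue bookkeeping in the Donsker step: the interval-wise bracketing scales must be balanced simultaneously against the local H\"older exponent (needing $\alpha > 1/2$) and the tail weights $\sqrt{\mu(I_k)}$ (needing \eqref{eq:BorisovDurstRealLine}), and these two phenomena have to cooperate to keep the global $L^2(\mu)$ bracketing integral finite.
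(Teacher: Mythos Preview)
Your proposal is correct and follows essentially the same route as the paper: verify Assumptions \ref{ass:CostsContinuousAndBounded} and \ref{ass:EquicontinuityLocallyCompact}, observe that $\FC_c$ and $\FC_c\powerC$ are contained in the class of uniformly bounded $(\alpha,L)$-H\"older functions on $\R$, and then establish the Donsker property of this H\"older class via a partition-and-glue argument over unit intervals, where $\alpha>1/2$ controls the local entropy and \eqref{eq:BorisovDurstRealLine} controls the aggregation.

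The only difference is one of packaging. The paper cites \citet[Example~2.10.25]{van1996weak} as a black box (with $M_k\equiv L$), which itself is a consequence of the partition theorem \citet[Theorem~2.10.24]{van1996weak} combined with the H\"older entropy bound; you instead sketch the argument directly via bracketing numbers and the bracketing CLT \citet[Theorem~2.5.6]{van1996weak}. Both routes implement the same idea, and your explicit handling of the interval-wise brackets makes the roles of $\alpha>1/2$ and \eqref{eq:BorisovDurstRealLine} more transparent, at the cost of some bookkeeping that the paper outsources to the reference.
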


\begin{proof}
By the assumptions imposed on the cost and since the setting focuses on the real line $\R$, the Assumptions \ref{ass:CostsContinuousAndBounded} and \ref{ass:EquicontinuityLocallyCompact} hold. Based on \Cref{thm:GeneralCLT}, it remains to prove the Donsker properties of the function classes $\FC_c$ and $\FC_c^c$ in this case. By \Cref{lem:OTpotentials} (i), the function classes $\FC_c$ and $\FC_c\powerC$ are both contained in the class of uniformly bounded $(\alpha,L)$-H\"older functions on $\R$. According to \citet[Example 2.10.25]{van1996weak} with $M_k=L$ for all $k\in\mathbb{Z}$ this class is $\mu$-Donsker if \eqref{eq:BorisovDurstRealLine} is fulfilled.
\end{proof}

\begin{example}[Costs $c(x,y) = h(x-y)$, $d =1$]\label{ex:ApplicationsDim1}
To demonstrate a few consequences of \Cref{cor:CLT_RealLine}, suppose the cost is equal to $c(x,y) = h(x-y)$ on $\RR$ for a suitable function $h\colon \RR\rightarrow \RR_+$. For instance, under a bounded $(\alpha,L)$-H\"older function $h\colon \RR\rightarrow \RR_+$ with $\alpha>1/2$ and $L>0$ condition \eqref{eq:HoelderCosts} is satisfied. Hence for $\mu, \nu \in \PC(\RR)$ that fulfill condition \eqref{eq:BorisovDurstRealLine} \Cref{cor:CLT_RealLine} yields CLTs for the empirical OT cost.
	This setting encompasses, e.g., thresholded costs $c_{p,T}(x,y) = \min(|x-y|^p, T)$ for $p>\alpha$ and $T>0$. Notably, for $p = T = 1$, the function class $\FC_c$ coincides by Kantorovich-Rubinstein duality with $\BL{\RR}$, the class of $1$-Lipschitz functions uniformly bounded by one, for which condition \eqref{eq:BorisovDurstRealLine} is necessary and sufficient in order to be $\mu$-Donsker \cite[Theorem 1]{gine1986empirical}. For this case  it holds for $n\rightarrow \infty$ that 
\begin{equation*}
    \sqrt{n}\OT_{c_{1,1}}(\hat \mu_n, \mu)  \konvD \sup_{f \in \BL{\RR}} \mathbb{G}_\mu(f),
\end{equation*}
where the limit distribution only degenerates if $\mu$ is a Dirac measure. 

When the probability measures $\mu, \nu$ are compactly supported, the summability condition \eqref{eq:BorisovDurstRealLine} is trivially fulfilled and by restricting to the support of the probability measures it  suffices that $h$ is only \emph{locally} $\alpha$-H\"older for $\alpha\in (1/2,1]$. This provides for costs $c(x,y) = |x-y|^p$ with $p>1/2$ novel CLTs for the empirical OT cost. In particular, if the support of $\mu$ is disconnected, then \citet[Lemma 11]{staudt2021uniqueness} assert  existence of non-trivial potentials for $S_c(\mu, \mu)$ which implies the resulting limit distribution of $\sqrt{n}\OT_c(\hat \mu_n, \mu)$ to be non-degenerate (\Cref{thm:DegenerateOTLimits}). In contrast, if $\supp(\mu)$ is the closure of an open connected set for $p>1$ Kantorovich potentials  $S_c(\mu, \mu)$ are unique \cite[Corollary 2]{staudt2021uniqueness} and trivial and thus the respective limit distribution degenerates, indicating a faster convergence rate (see e.g. for $p=2$ the CLT by \cite{del2005asymptotics} which requires additional regularity assumptions on a density of $\mu$ and scaling by $n$). 
\end{example}

\begin{example}[Kantorovich-Rubinstein duality]
For Euclidean costs $c(x,y)=|x-y|$ and a compactly supported probability measure $\mu$ set $\XC = \YC = \supp(\mu)\subseteq \RR$. Then, the set $\FC_c = S_1(\mu, \mu) = \mathrm{Lip}_1(\XC)$ consists of $1$-Lipschitz functions on $\XC$ which are absolutely bounded by the diameter of $\XC$. In particular, we obtain for $n\rightarrow \infty$ that
\begin{equation*}
    \sqrt{n}\OT_{1}(\hat \mu_n, \mu)  \konvD \sup_{f \in \mathrm{Lip}(\XC)} \mathbb{G}_\mu(f).\notag
\end{equation*}
The approach presented here is based on the dual formulation of the OT cost. Using the primal perspective, which provides for $p=1$ an explicit formula for $\OT_1(\cdot,\cdot)$, \cite{del1999central} derived a CLT that also holds for $\mu$ with non-compact support (see also \citealt{mason2016weighted}). If $F_\mu$ denotes the cumulative distribution function of $\mu$ such that
\begin{equation}\label{eq:J1functional}
    \int_{-\infty}^{\infty} \sqrt{F_\mu(t)(1-F_\mu(t))}\dif t <  \infty,\notag
\end{equation}
then, for $\B_\mu(t)=\B(F_\mu(t))$ with $\B(t)$ a standard Brownian bridge and $n\rightarrow \infty$, it holds
\begin{equation}\label{eq:Wasserstein1WeakLimit}
    \sqrt{n}\OT_{1}(\hat \mu_n, \mu)   \konvD \int_{-\infty}^{\infty} \rvert \B_{\mu}(t)\lvert \dif t.
\end{equation}
Indeed, by suitably coupling the Gaussian processes $\GG_\mu$ and $\B_\mu$ and approximating the respective random element in terms of an unsigned measure, an application of Fubini's theorem shows for compactly supported $\mu$ with $\XC = \supp(\mu)$ that
\begin{equation}
    \int_{-\infty}^{\infty} \rvert \B_{\mu}(t)\lvert \dif t \overset{\mathcal{D}}{=}  \sup_{f \in \mathrm{Lip}(\XC)} \mathbb{G}_\mu(f).\notag
\end{equation}
\end{example}

\subsection{The Two- and Three-dimensional Euclidean Space} \Cref{thm:GeneralCLT} also characterizes the limit law for the empirical OT cost beyond the real line. For Euclidean spaces with dimension $d=2$ or $d = 3$ we obtain the following novel results.

\begin{theorem}[$d=2,3$]\label{cor:CLT_EuclSpace}
Consider the Euclidean space $\R^d$ for $d=2$ or $d = 3$ with cost $c\colon \R^d \times \R^d \rightarrow \R_+$ assumed to be bounded and $L$-Lipschitz\footnote{This refers to the cost function being $(1,L)$-H\"older, recall  \eqref{eq:HoelderCosts}.}. Further, suppose there exists some $\Lambda>0$ such that for all $k\in \mathbb{Z}^d$ there exist $x_k, y_k\in [k,k+1)$ such that
\begin{equation}\label{eq:locallySemiconcave}
  \begin{aligned}
     &c(\cdot, y) - \Lambda \norm{\cdot-x_k}_2^2 \text{ is concave on } [k,k+1)\text{ for all } y \in \R^d,\\
     &c(x, \cdot) - \Lambda \norm{\cdot-y_k}_2^2 \text{ is concave on } [k,k+1) \text{ for all } x \in \R^d.
 \end{aligned}
 \end{equation}
If the probability measure $\mu\in\PC(\R^d)$ fulfills 
 \begin{equation}\label{eq:BorisovDurstContinuousHigherD}
    \sum_{k \in \mathbb{Z}^d} \sqrt{\mu\left([k,k+1)\right)} < \infty,
\end{equation}
then the CLT in \eqref{eq:GeneralCLT_OneSampleMu} is valid. If $\nu\in\PC(\R^d)$ fulfills $\eqref{eq:BorisovDurstContinuousHigherD}$, then \eqref{eq:GeneralCLT_OneSampleNu} holds. In case both $\mu$ and $\nu$ fulfill \eqref{eq:BorisovDurstContinuousHigherD}, then \eqref{eq:GeneralCLT_TwoSample} holds.
\end{theorem}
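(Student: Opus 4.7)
The plan is to reduce the statement to Theorem \ref{thm:GeneralCLT}, so I need to verify the two structural assumptions and the Donsker properties of $\FC_c$ and $\FC_c^c$. For the first point, since $c$ is bounded on $\R^d$ and is jointly $L$-Lipschitz, both families $\{c(\cdot,y)\mid y\in\R^d\}$ and $\{c(x,\cdot)\mid x\in\R^d\}$ share the common Lipschitz modulus, which together with local compactness of $\R^d$ gives Assumption \ref{ass:CostsContinuousAndBounded} combined with \ref{ass:EquicontinuityLocallyCompact}.

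The heart of the argument is showing that $\FC_c$ is $\mu$-Donsker (and symmetrically that $\FC_c\powerC$ is $\nu$-Donsker). I would first transfer the local semi-concavity hypothesis \eqref{eq:locallySemiconcave} from $c$ to every $f \in \FC_c$. Writing $f(x) = \inf_{y}\{c(x, y) - g(y)\}$ with $-\norm{c}_\infty \leq g \leq 0$ and fixing a cube $[k, k+1)$, the identity
\begin{equation*}
    f(x) - \Lambda\norm{x - x_k}_2^2 \;=\; \inf_{y\in\R^d}\Big\{\big(c(x, y) - \Lambda\norm{x - x_k}_2^2\big) - g(y)\Big\}
\end{equation*}
exhibits the left-hand side as an infimum of concave functions on $[k, k+1)$ and hence as concave itself. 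Combined with the uniform bound $\norm{f}_\infty \leq \norm{c}_\infty$ and the $L$-Lipschitz property inherited directly from $c$, every restriction $f\mathbbm{1}_{[k, k+1)}$ belongs to a fixed class $\mathcal{H}$ of uniformly bounded, uniformly $L$-Lipschitz and uniformly $\Lambda$-semi-concave functions on a unit cube, independent of $k\in\Z^d$. The second line of \eqref{eq:locallySemiconcave} yields the analogous inclusion for $\FC_c\powerC$.

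Next I would invoke a Bronshtein-type covering estimate: since semi-concavity is concavity after subtracting a fixed quadratic, the classical entropy bound for uniformly bounded, Lipschitz, convex functions on a compact convex set applies after this translation, giving
\begin{equation*}
    \log \NC\big(\epsilon, \mathcal{H}, \norm{\cdot}_\infty\big) \;\lesssim_{L, \Lambda, \norm{c}_\infty}\; \epsilon^{-d/2}
\end{equation*}
uniformly in $k\in\Z^d$. For $d \leq 3$ this satisfies $\int_0^1 \sqrt{\log \NC(\epsilon, \mathcal{H}, \norm{\cdot}_\infty)}\,\dif\epsilon<\infty$, which is the local Donsker criterion on each cube. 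I would then glue these local estimates into a global Donsker statement via the patching technique in \citet[Example 2.10.25]{van1996weak}, with envelopes $M_k$ chosen as a universal constant (since the restrictions all lie in the same translation-invariant class $\mathcal{H}$); the hypothesis \eqref{eq:BorisovDurstContinuousHigherD} then plays exactly the role of the envelope summability required there and delivers the $\mu$-Donsker property of $\FC_c$. The same argument for $\FC_c\powerC$ together with $\nu$ completes the hypotheses of Theorem \ref{thm:GeneralCLT}, from which the three CLTs follow directly.

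The main obstacle is the semi-concave entropy bound combined with the uniform-in-$k$ patching step: one must keep all constants independent of $k$ so that a single summability hypothesis suffices. The dimension restriction $d\leq 3$ enters precisely through the requirement $\int_0^1 \epsilon^{-d/4}\dif\epsilon < \infty$, which just barely fails at $d=4$, in agreement with \Cref{ex:Dgtr4}.
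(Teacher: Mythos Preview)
Your proposal is correct and follows essentially the same route as the paper: verify \ref{ass:CostsContinuousAndBounded} and \ref{ass:EquicontinuityLocallyCompact}, transfer the cube-wise semi-concavity from $c$ to every $f\in\FC_c$ via the infimum representation, invoke Bronshtein's $\epsilon^{-d/2}$ entropy bound for bounded Lipschitz concave functions on each unit cube, and patch across cubes under the summability hypothesis \eqref{eq:BorisovDurstContinuousHigherD}. The only cosmetic difference is that the paper cites \citet[Theorem~2.10.24]{van1996weak} directly and spells out the chaining bound $\EV{\norm{\sqrt{n}(\hat\mu_n-\mu)}_{\FC_{c,k}}}\lesssim\sqrt{\mu([k,k+1))}$, whereas you point to the packaged form in \citet[Example~2.10.25]{van1996weak}; since that example is precisely an application of Theorem~2.10.24 with polynomial uniform-entropy classes and constant envelopes, the two arguments are the same in substance.
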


\begin{proof}
By the assumptions imposed on the cost and since the setting focuses on the Euclidean space $\R^d$, the Assumptions \ref{ass:CostsContinuousAndBounded} and \ref{ass:EquicontinuityLocallyCompact} hold. Based on the main \Cref{thm:GeneralCLT}, it remains to prove the Donsker properties of the function classes $\FC_c$ and $\FC_c^c$ in this case. For a convex bounded set $\Omega\subseteq \R^d$ and constants $K,L>0$, let $\CC_{K, L}(\Omega)$ be the class of concave $L$-Lipschitz functions which are absolutely bounded by $K$. In order to prove that $\FC_c$ is $\mu$-Donsker if \eqref{eq:BorisovDurstContinuousHigherD} holds, we employ \citet[Theorem 2.10.24]{van1996weak}. To this end, consider a partition $\R^d = \bigcup_{k \in \mathbb{Z}^d}[k,k+1)$ and define the function class $\FC_{c,k}\coloneqq \FC_c\mathbbm{1}_{[k,k+1)}$. We first verify that each class $\FC_{c,k}$ is $\mu$-Donsker. First note for $k \in \mathbb{Z}^d$ and any element $f \in \FC_c$ that $f(\cdot) - \Lambda \norm{\cdot-x_k}_2^2$ is bounded, Lipschitz and concave on $[k,k+1)$ (see \Cref{lem:OTpotentials} $(ii)$). More precisely, for any $f \in\FC_c$ and since $x_k\in [k,k+1)$ it follows
\begin{equation*}
\left(f(\cdot)-\Lambda\norm{\cdot-x_k}_2^2\right)\!\Big|_{[k,k+1)}\in \CC_{\kappa,l}\left([k,k+1)\right)
\end{equation*} 
with $\kappa \coloneqq (\norm{c}_\infty + \Lambda d)$ and $l \coloneqq (L+2\Lambda d)$. According to \citet[Theorem 6]{bronshtein1976varepsilon}\footnote{The work by \cite{bronshtein1976varepsilon} in fact only provides metric entropy bounds for \emph{convex} bounded Lipschitz functions on a cube but of course they remain valid for concave functions.}, we conclude for $\epsilon>0$ sufficiently small
\begin{equation*}
\log\left(\NC(\epsilon,\FC_{c,k}, \norm{\cdot}_{\infty})\right)\leq \log\left(\NC(\epsilon,\CC_{\kappa,l}([k,k+1)), \norm{\cdot}_{\infty, [k,k+1)})\right) \lesssim_{\kappa,l,d}  \epsilon^{-d/2}.
\end{equation*}
Note that the function class $\FC_{c,k}$ has envelope function $F_{c,k}(\cdot)\coloneqq \norm{c}_\infty\mathbbm{1}_{[k,k+1)}(\cdot)$. Furthermore, an $\epsilon \norm{c}_\infty$-covering $\{f_1, \dots, f_N\}$ of $\FC_{c,k}$ with respect to $\norm{\cdot}_{\infty}$ defines for any finitely supported probability measure $\gamma \in \PC(\R^d)$ with $\norm{F_{c,k}}_{2,\gamma}>0$ an $\epsilon \norm{F_{c,k}}_{2,\gamma}$-covering for $\FC_{c,k}$ with respect to $\norm{\cdot}_{2,\gamma}$. 
Indeed, for $f \in \FC_{c,k}$ pick $f_i$ such that $\norm{f - f_i}_\infty< \epsilon\norm{ c }_\infty$ which yields
\begin{equation*}
     \norm{f - f_i}_{2,\gamma} \leq \sqrt{\int_{[k,k+1)} \epsilon^2\norm{ c }_\infty^2 \dif \gamma} = \epsilon \norm{c}_\infty \sqrt{\gamma([k,k+1))} = \epsilon \norm{F_{c,k}}_{2,\gamma}.
\end{equation*}
Hence, we conclude for any finitely supported $\gamma \in \PC(\R^d)$ with $\norm{F_{c,k}}_{2,\gamma}>0$ and sufficiently small $\epsilon$ that
\begin{align*}
\log \left(\NC( \epsilon \norm{F_{c,k}}_{2,\gamma}, \FC_{c,k}, \norm{\cdot}_{2, \gamma})\right) \leq \log \left(\NC(\epsilon \norm{c}_\infty, \FC_{c,k}, \norm{\cdot}_{\infty})\right) \lesssim_{\kappa, l,d} \epsilon^{-d/2}.
\end{align*}
After taking square roots the latter bound is integrable around zero for $d\leq  3$ which yields the $\mu$-Donsker property for $\FC_{c,k}$ \cite[Theorem 2.5.2]{van1996weak}. The $\mu$-Donsker property of the whole $\FC_c$ now follows if
\begin{equation}\label{eq:NecSummationConstraintForvdV}
\sup_{n \in \N}\sum_{k \in \mathbb{Z}^d}\mathbb{E}\left[\norm{\sqrt{n}(\hat\mu_n - \mu)}_{\FC_{c,k}}\right]< \infty.
\end{equation}
By standard chaining arguments each individual summand can be bounded by
\begin{align*}
\mathbb{E}\left[\norm{\sqrt{n}(\hat\mu_n - \mu)}_{\FC_{c,k}}\right] &\leq\int_{0}^{1}\sup_{\gamma}\sqrt{1+ \log\left(\NC(\epsilon \norm{F_{c,k}}_{2,\gamma}, \FC_{c,k}, \norm{\cdot}_{2, \gamma})\right)} \dif \epsilon \norm{F_{c,k}}_{2,\mu} \\
&\leq \int_{0}^{1}\sqrt{1+ \log\left(\NC(\epsilon \norm{c}_\infty, \FC_{c,k}, \norm{\cdot}_{\infty})\right)} \dif \epsilon \norm{F_{c,k}}_{2,\mu} \\
&\lesssim_{\kappa,l,d} \int_0^{1} \epsilon^{-d/4} \dif \epsilon  \norm{F_{c,k}}_{2,\mu} \lesssim_{d}  \norm{F_{c,k}}_{2,\mu} =  \norm{c}_\infty \sqrt{\mu([k,k+1))}. 
\end{align*}
Herein, the supremum runs over all finitely supported probability measures over $\mathbb{R}^d$ which leads to the claimed upper bound by our previous arguments. Summing over $k\in\mathbb{Z}^d$ and provided $\mu$ fulfills \eqref{eq:BorisovDurstContinuousHigherD} yields \eqref{eq:NecSummationConstraintForvdV}.
\end{proof}

The summability constraints \eqref{eq:BorisovDurstRealLine} and \eqref{eq:BorisovDurstContinuousHigherD} are reminiscent of the Borisov-Dudley-Durst condition \eqref{eq:BorisovDurst}. Indeed, they naturally appear by partitioning the Euclidean space $\R^d = \bigcup_{k \in \mathbb{Z}^d} [k,k+1)$ and controlling the empirical process indexed over the respective function class restricted to individual partitions \cite[Theorem 2.10.24]{van1996weak}. For the latter, the proof of \Cref{cor:CLT_EuclSpace} exploits well-known metric entropy bounds for the class of $\alpha$-H\"older and concave, Lipschitz functions, respectively. Notably, crucial to CLTs for dimension $d=2,3$ is condition \eqref{eq:locallySemiconcave} that enables suitable upper bounds for the metric entropy of $\FC_c$ and~$\FC_c\powerC$.

\begin{remark}[On the assumptions]\label{rem:AssumptionsDim23} A few words regarding the required assumptions for the CLTs of this subsection are in order. 
\begin{enumerate}
    \item[$(i)$] The partition of $\R^d = \bigcup_{k \in \mathbb{Z}^d} [k,k+1)$ by regular cubes is arbitrary and any partition of convex, bounded sets $I_k\subset\R^d$ with non-empty interior such that $\sup_{k} \text{diam}(I_k)<\infty$ serves to derive the same conclusion. 
    
    \item[$(ii)$]
    Condition \eqref{eq:locallySemiconcave} is fulfilled if the cost function is twice continuously differentiable in both components with a uniform bound $K>0$ on the Eigenvalues of its Hessian. In this setting the bound from \eqref{eq:locallySemiconcave} is valid for $\Lambda = K/2$. 
    \item[$(iii)$] The summability constraints \eqref{eq:BorisovDurstRealLine} and  \eqref{eq:BorisovDurstContinuousHigherD} are well-known in the context of empirical process theory \cite[Section 2.10.4]{van1996weak}. A sufficient condition is given in terms of finite moments $\EV{\norm{X}_\infty^{2d+\delta}}< \infty$ for some $\delta>0$ since
    \begin{equation*}
        \sum_{k \in \mathbb{Z}^d} \sqrt{\mu([k,k+1))} \lesssim 2^d \sum_{n = 1}^{\infty} \sqrt{n^{2d-2} \mathbb{P}(\norm{X}_\infty \geq n)} \leq 2^d \sqrt{\EV{\norm{X}_\infty^{2d+\delta}}} \sum_{n = 1}^{\infty} n^{-(1+\delta/2)},
    \end{equation*}
    where the latter inequality follows by Markov's inequality. Notably, for compactly supported measures the condition is vacuous.
\end{enumerate}
\end{remark}

\begin{example}[Costs $c(x,y) = h(x-y)$, $d = 2,3$]
Let us provide a few consequences of \Cref{cor:CLT_EuclSpace} by taking costs $c(x,y) = h(x-y)$ for some suitable function $h\colon \RR^d\rightarrow \RR_+$ with $d\in \{2,3\}$. If $h$ is a bounded and twice continuously differentiable function with uniformly bounded first and second derivatives, then the required conditions on the cost in \Cref{cor:CLT_EuclSpace} are fulfilled. Moreover, since the pointwise minimum of bounded, Lipschitz, semi-concave functions with bounded modulus also exhibits these properties, we find that \Cref{cor:CLT_EuclSpace} also covers thresholded costs $c_{p,T}(x,y) = \min(\norm{x-y}^p,T)$ for $p\geq 2$ and $T>0$. 
Costs for the canonical flat torus  $\tilde c(x,y) = \min_{z \in \mathbb{Z}^d} h(x-y-z)$, which have been considered by \cite{gonzalez2021two} for $h(x) = \norm{x}^2$, also fulfill these conditions. 
For these cases, \Cref{cor:CLT_EuclSpace} provides novel CLTs for $\mu, \nu \in \PC(\RR^d)$ if the summability constraint \eqref{eq:BorisovDurstContinuousHigherD} is satisfied. Moreover, if $\mu$ and $\nu$ are both compactly supported, condition \eqref{eq:BorisovDurstContinuousHigherD} is vacuous and it suffices that $h$ is twice continuously differentiable, hence, our theory encompasses $c(x,y) = \norm{x-y}^p$ with $p \geq 2$.

In view of \Cref{sec:DegenerateLimitLaws} the resulting limit distributions typically do not degenerate for $\mu \neq \nu$ since triviality of Kantorovich potentials is linked to the underlying geometry of the corresponding measures' supports (\Cref{thm:ConstantPotentials}) and appears to be limited to exotic settings. 
In contrast, under $\mu = \nu$ constant potentials do exist if $h(0) = 0$   (\Cref{cor:ConstantPotentials1}) and degeneracy of the distributional limits depends on whether the Kantorovich potentials are unique. Indeed, if $\supp(\mu)$ is the closure of a connected open set, then uniqueness holds \citep[Corollary 2]{staudt2021uniqueness} and the convergence rate of the empirical OT cost is strictly faster than $n^{-1/2}$. Notably, this in line with results by \cite{ajtai1984optimal} and \cite{ledoux2019OptimalMatchingI} stating that the uniform distribution $\mu = \Unif([0,1]^d)$ fulfills $\EV{\OT_p(\hat \mu_n, \nu)} = o(n^{-1/2})$ for $d \leq 3$ and $p \geq 2$. However, if $\supp(\mu)$ is disconnected and those components are cost-separated\footnote{For costs $c(x,y)\colon \XC\times \XC\rightarrow \R_+$ with $c(x,x)=0$ two subsets $\XC_1, \XC_2\subseteq \XC$ are said to be cost-separated if $\inf_{x\in \XC_1,y\in \XC_2}\min(c(x,y), c(y,x))>0$.}, then non-trivial Kantorovich potentials also exist \citep[Lemma 11]{staudt2021uniqueness} leading to non-degenerate limit laws. 
\end{example}

\subsection{The $d$-dimensional Euclidean Space for $d\geq 4$}\label{ex:Dgtr4}

	Beyond the low dimensional setting $d \leq 3$ treated so far, CLTs centered by the population quantity cannot hold in generality for $d \geq 5$ and remain a delicate issue for $ d = 4$. 
	For instance, under squared Euclidean costs $c(x,y) = \norm{x-y}^2$ and $\mu = \Unif([a,a+1])$, $\nu = \Unif([b,b+1])$ for $a,b \in \R^d$ 
	the OT plan between $\mu$ and $\nu$ is given by $(\id, \id +b-a)_{\#}\mu$ which implies according to  \citet[Theorem 6]{Manole2021_Plugin} that
$$
    \EV{\OT_2(\hat \mu_n, \nu)} - \OT_2(\mu, \nu) =   \EV{\OT_2(\hat \mu_n, \mu)}.
$$
	For $d \geq 3$ it is known by \cite{ ledoux2019OptimalMatchingI}  that $\EV{\OT_2(\hat \mu_n, \mu)} \asymp n^{-2/d}$. 
	Recalling the CLT by \cite{delbarrio2019} in \eqref{eq:delbarrioCLT} this implies the random sequence  
\begin{align*}
	&\sqrt{n}(\OT_2(\hat \mu_n, \nu) - \OT_2(\mu, \nu)) \\
	=& \sqrt{n}(\OT_2(\hat \mu_n, \nu) - \EV{\OT_2(\hat \mu_n, \nu)}) + \sqrt{n}\EV{\OT_2(\hat \mu_n, \mu)}
\end{align*}
 to be tight for $d = 4$ and to diverge almost surely to $\infty$ for $d \geq 5$. In conjunction with \citet[Theorem 6.2]{goldman2021convergence} who prove $\lim_{n \rightarrow \infty} \sqrt{n}\EV{\OT_2(\hat \mu_n, \mu)} = K$ under $d = 4$ for some positive constant $K>0$, it thus follows from \eqref{eq:delbarrioCLT} for $n \rightarrow \infty$ that
\begin{equation}\label{eq:CLTdim4}
   \sqrt{n}(\OT_2(\hat \mu_n, \nu) - \OT_2(\mu, \nu)) \konvD Z \sim \NC(K, \Var_{X\sim\mu}[f(X)]),
\end{equation}
where $f\in S_c(\mu, \nu)$ is a Kantorovich potential between $\mu$ and $\nu$. 
Notably, in this setting the Kantorovich potential is  unique (recall \Cref{def:UniquePotentials}) and also trivial if and only if $\mu = \nu$ (or equivalently if $a = b$).

In view of \Cref{thm:NormalLimits} the previous example also highlights that the function class $\FC_c$ is not $\mu$-Donsker for $d\geq 4$, since otherwise a centered tight normal limit would result. 
 In~particular, the CLT in \eqref{eq:CLTdim4} resembles the first asymptotic distributional limit for the empirical squared $2$-Wasserstein distance for $d = 4$ where the centering is given by the population quantity. 
 An exception concerning the Donsker property of $\FC_c$ in the high-dimensional regime occurs for different probability measures if one of them is supported on a sufficiently low dimensional space as will be detailed in the next subsection.

\subsection{Empirical Optimal Transport under Lower Complexity Adaptation} 

In this section, we highlight our CLTs for empirical OT in view of the recently discovered  \emph{lower complexity adaptation} principle \citep{hundrieser2021convergence}. It states that statistical rates to estimate the empirical OT cost between two different probability measures $\mu$ and $\nu$ are  driven by the less complex measure, e.g., the one with lower dimensional support. In light of this principle, we emphasize that our CLTs extend beyond the low-dimensional Euclidean case provided that at least one measure has some low-dimensional compact support. The main result relies on the observation that the uniform metric entropies for $\FC_c$ and $\FC_c\powerC$ coincide in such settings \cite[Lemma 2.1]{hundrieser2021convergence}. Hence, under suitable bounds on the uniform metric entropy for only one of the function classes $\FC_c$ or $\FC_c\powerC$, it follows that \emph{both} are universal Donsker (Definition \ref{def:Donsker}).

The following result makes use of this observation and is specifically tailored to settings where $\XC$ has low intrinsic dimension which allows the complexity of $\FC_c$ to be suitably controlled such that it is universal Donsker.
To formalize this, we consider the setting where $\XC$ is a finite set or a compact submanifold of $\RR^d$ (see \citealt{lee2013smooth} for comprehensive treatment) with sufficiently small intrinsic dimension. 
Notably, the first setting covers semi-discrete OT \citep{aurenhammer1998minkowski,merigot2011multiscale, hartmann2020semi}, i.e., where one of the probability measures is assumed to be finitely supported.
Then, under suitable assumptions on the cost function the uniform metric entropy of $\FC_c$ is suitably bounded which enables CLTs for the empirical OT cost. 

\begin{theorem}[Semi-discrete]\label{thm:Semidiscrete}
	 Let $\XC$ be a finite set equipped with discrete topology, let $\YC$ be Polish space and consider a bounded and continuous cost function $c\colon \XC\times \YC\rightarrow \RR_+$. 
	  Then, for arbitrary $\mu \in \PC(\XC), \nu \in \PC(\YC)$ the weak limits from \eqref{eq:GeneralCLT} hold.
\end{theorem}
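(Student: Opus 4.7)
The plan is to verify the hypotheses of \Cref{thm:GeneralCLT} directly. Assumption \ref{ass:CostsContinuousAndBounded} holds by hypothesis, and Assumption \ref{ass:EquicontinuityCompact} is immediate: the finite discrete space $\XC$ is compact, and any family of functions on a discrete metric space is trivially equicontinuous (take $\delta < 1$ in the discrete metric so that the equicontinuity inequality becomes vacuous). It therefore suffices to prove that both $\FC_c$ and $\FC_c\powerC$ are universal Donsker, after which parts $(i)$, $(ii)$ and $(iii)$ of \Cref{thm:GeneralCLT} follow simultaneously for every choice of $\mu\in \PC(\XC)$ and $\nu\in \PC(\YC)$.

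The class $\FC_c$ is handled by a routine finite-dimensional argument. Writing $N \coloneqq |\XC| < \infty$, every element of $\FC_c$ is a uniformly bounded vector in $\R^N$ with bound depending only on $\norm{c}_\infty$, and the empirical process $\sqrt{n}(\hat\mu_n - \mu)$ viewed in $\ellInf{\FC_c}$ is the image of the finite-dimensional vector $\sqrt{n}(\hat\mu_n(x) - \mu(x))_{x \in \XC}$ under the Lipschitz evaluation map $v \mapsto (f \mapsto \sum_{x\in\XC} f(x) v(x))$. The multivariate CLT combined with the continuous mapping theorem yields weak convergence to a tight Gaussian limit in $\ellInf{\FC_c}$, so $\FC_c$ is universal Donsker.

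The main step, which is the one requiring actual content, is to propagate the Donsker property to $\FC_c\powerC$. Here I would use the well-known fact that $c$-conjugation is a $\norm{\cdot}_\infty$-contraction, which is the elementary driver behind the lower complexity adaptation principle of \cite[Lemma 2.1]{hundrieser2021convergence}: for $f, g \in \FC_c$ and $y \in \YC$,
\begin{equation*}
    f\powerC(y) - g\powerC(y) = \inf_{x \in \XC}\bigl(c(x,y) - f(x)\bigr) - \inf_{x \in \XC}\bigl(c(x,y) - g(x)\bigr) \le \norm{f - g}_\infty,
\end{equation*}
with the reverse bound following by symmetry. Hence every uniform $\epsilon$-net of $\FC_c$ induces a uniform $\epsilon$-net of $\FC_c\powerC$, so that $\NC(\epsilon, \FC_c\powerC, \norm{\cdot}_\infty) \le \NC(\epsilon, \FC_c, \norm{\cdot}_\infty) \lesssim_{N, \norm{c}_\infty} \epsilon^{-N}$. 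Since $\FC_c\powerC$ admits a constant envelope $F$ depending only on $\norm{c}_\infty$ and the $L^2(Q)$-norm is dominated by $\norm{\cdot}_\infty$ for every probability measure $Q$ on $\YC$, the uniform entropy integral $\int_0^1 \sup_Q \sqrt{\log \NC(\epsilon \norm{F}_{Q,2}, \FC_c\powerC, L^2(Q))}\dif\epsilon$ is bounded by a constant multiple of $\int_0^1 \sqrt{N \log(1/\epsilon)}\dif\epsilon < \infty$. Invoking \cite[Theorem 2.5.2]{van1996weak} identifies $\FC_c\powerC$ as universal Donsker, and \Cref{thm:GeneralCLT} then delivers the claim. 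The only genuinely non-trivial ingredient is the contraction argument, which is what allows the low complexity of the finite-dimensional class $\FC_c$ to be transferred to $\FC_c\powerC$ regardless of how complex the target Polish space $\YC$ might be.
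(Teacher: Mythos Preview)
Your proof is correct and follows essentially the same route as the paper: verify Assumptions \ref{ass:CostsContinuousAndBounded} and \ref{ass:EquicontinuityCompact}, control the uniform metric entropy of $\FC_c$ using the finite-dimensionality of $\XC$, transfer this bound to $\FC_c\powerC$ via the $c$-conjugation contraction (this is precisely \cite[Lemma~2.1]{hundrieser2021convergence}, which you correctly identify and re-derive), and then invoke \cite[Theorem~2.5.2]{van1996weak} and \Cref{thm:GeneralCLT}. The only cosmetic difference is that the paper handles $\FC_c$ and $\FC_c\powerC$ uniformly through the entropy bound $\log\NC(\epsilon,\FC_c,\norm{\cdot}_\infty)\lesssim \log(\lceil\epsilon^{-1}\rceil)$ and Theorem~2.5.2, whereas you dispatch $\FC_c$ separately via the multivariate CLT and continuous mapping---both arguments are standard and equivalent here.
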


\begin{remark}[Limit distribution for  semi-discrete OT]
We comment on structural properties of the limit distribution underlying \Cref{thm:Semidiscrete}. When the support of $\nu$ is connected (and the cost function is continuous), then Kantorovich potentials are unique  \cite[Example 3]{staudt2021uniqueness}. Hence, in this setting the corresponding weak limit is always centered normal. Even if the support of $\nu$ is not connected, under a suitable non-degeneracy condition of the OT plan Kantorovich potentials still remain  unique. Hence, under uniqueness of Kantorovich potentials, \Cref{thm:ConstantPotentials} serves as a sharp statement for the degeneracy of the limit distribution (\Cref{rem:UniquenessGivesSharpness}). In  particular, this shows  $S_c(\mu, \nu)$ to be trivial only under certain geometrical configurations, whereas $S_c^c(\mu, \nu)$ to be generically non-trivial.

Let us also point out that, parallel and independently to this work, \cite{del2022central} recently also obtained a CLT for the empirical OT cost in the semi-discrete framework for unbounded cost functions. In their setting, our triviality statements on Kantorovich potentials (\Cref{thm:ConstantPotentials}) and  degeneracy results for the corresponding limit laws also apply since for semi-discrete OT the Kantorovich potentials are always continuous (as a finite minimum over continuous functions).
\end{remark}

\begin{theorem}[Manifolds]\label{cor:CLT_LCA} 
Let $\XC$ be an $s$-dimensional smooth compact submanifold of $\RR^d$ with $s\leq \min(3,d)$, let $\YC \subseteq \RR^d$ be compact and consider a continuous cost function $c\colon \RR^d\times \RR^d\rightarrow \RR_+$. Suppose one of the following two settings.
\begin{enumerate}
	\item[$(i)$] \,\,$s=1$ and $c$ is locally $\alpha$-H\"older for $\alpha\in (1/2,1]$. 
	\item[$(ii)$] $s = 2$ or $s = 3$ and $c$ is twice continuously differentiable.
\end{enumerate}
    Then, for arbitrary $\mu \in \PC(\XC), \nu \in \PC(\YC)$ the weak limits from \eqref{eq:GeneralCLT} hold.
\end{theorem}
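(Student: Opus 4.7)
The plan is to apply \Cref{thm:GeneralCLT}. Its first two hypotheses \ref{ass:CostsContinuousAndBounded} and \ref{ass:EquicontinuityCompact} are immediate: $c$ is bounded and uniformly continuous on the compact product $\XC\times\YC$. All the work lies in establishing that $\FC_c$ is $\mu$-Donsker and $\FC_c^c$ is $\nu$-Donsker for every choice of $\mu\in\PC(\XC)$ and $\nu\in\PC(\YC)$. I will show the stronger universal Donsker property of $\FC_c$ on $\XC$, and propagate it to $\FC_c^c$ on $\YC$ via the lower complexity adaptation principle \cite[Lemma 2.1]{hundrieser2021convergence} invoked immediately before the theorem: it asserts that the uniform metric entropies of $\FC_c$ and $\FC_c^c$ coincide, so a single entropy bound on the low-dimensional side suffices regardless of the ambient dimension $d$.

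To control the uniform metric entropy of $\FC_c|_\XC$, I would cover the compact smooth submanifold by a finite atlas of charts $\phi_\alpha\colon V_\alpha\subset\RR^s\to U_\alpha\subset\XC$ (with $\bigcup_\alpha U_\alpha=\XC$, each $\phi_\alpha$ a smooth diffeomorphism onto a relatively compact neighborhood with derivatives bounded up to order two on a common compact shrinkage) and analyze each pulled-back class $\FC_c|_{U_\alpha}\circ\phi_\alpha\subset C(V_\alpha)$. In setting $(i)$, \Cref{lem:OTpotentials}$(i)$ transfers local $\alpha$-H\"older continuity from $c$ to every $f\in\FC_c$; composing with the Lipschitz $\phi_\alpha$ yields a uniformly bounded class of $(\alpha,L')$-H\"older functions on a bounded subset of $\RR$, for which \cite[Example 2.10.25]{van1996weak} provides $\log\NC(\epsilon,\cdot,\|\cdot\|_\infty)\lesssim\epsilon^{-s/\alpha}$. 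In setting $(ii)$, the $C^2$-regularity of $c$ on the compact product makes the Hessian of $c(\cdot,y)$ uniformly bounded, so by \Cref{lem:OTpotentials}$(ii)$ every $f\in\FC_c$ is Lipschitz and locally semi-concave; these properties survive pull-back through a $C^2$ chart with adjusted constants, and \cite[Theorem 6]{bronshtein1976varepsilon} then gives $\log\NC(\epsilon,\cdot,\|\cdot\|_\infty)\lesssim\epsilon^{-s/2}$.

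In both cases the exponent of $\epsilon$ stays strictly below $2$: $s/\alpha<2$ for $s=1$ with $\alpha>1/2$, and $s/2<2$ for $s\leq 3$. Hence the uniform entropy integral converges; the chaining argument from the proof of \Cref{cor:CLT_EuclSpace}, combined with a finite partition of $\XC$ into the chart domains (the summability constraint of \cite[Theorem 2.10.24]{van1996weak} being vacuous under compact support), then shows that $\FC_c$ is universal Donsker. Both CLTs then follow from \Cref{thm:GeneralCLT}. The main obstacle will be the clean transfer of the entropy bound from $\FC_c$ to $\FC_c^c$ via the lower complexity adaptation lemma in this manifold framework (this is where compactness of $\YC$ and continuity of $c$ are essential), and, to a lesser extent, the careful bookkeeping of constants when pulling local semi-concavity through overlapping charts---handling each chart separately and summing, rather than trying to obtain a global semi-concavity statement on $\XC$, appears to be the cleanest route.
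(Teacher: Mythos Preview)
Your proposal is correct and follows essentially the same route as the paper: verify \ref{ass:CostsContinuousAndBounded} and \ref{ass:EquicontinuityCompact} by compactness, obtain a uniform metric entropy bound $\log\NC(\epsilon,\FC_c,\|\cdot\|_\infty)\lesssim\epsilon^{-s/\alpha}$ (setting $(i)$) or $\epsilon^{-s/2}$ (setting $(ii)$), transfer it to $\FC_c^c$ via \cite[Lemma~2.1]{hundrieser2021convergence}, and conclude universal Donsker from the resulting convergent entropy integral and \citet[Theorem~2.5.2]{van1996weak}. The only difference is that the paper outsources the entropy bounds on $\FC_c$ to \cite[Lemmas~A.3 and~A.4]{hundrieser2021convergence}, whereas you sketch their derivation directly via a finite atlas and \Cref{lem:OTpotentials}; your chart-pullback argument is precisely how those lemmas are proved, so the two approaches coincide in substance.

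Two minor remarks on your sketch: the Hölder entropy bound is not quite \citet[Example~2.10.25]{van1996weak} (which concerns a partitioning Donsker criterion) but rather the Kolmogorov--Tihomirov result recorded there as Theorem~2.7.1; and for setting $(ii)$ the preservation of semi-concavity under $C^2$ pullback uses, as you implicitly note, that the functions are also uniformly Lipschitz (so the $O(|h|^2)$ error from Taylor-expanding the chart is absorbed into the semi-concavity constant). Neither is a gap, just bookkeeping.
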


\begin{proof}[Proofs for Theorems \ref{thm:Semidiscrete} and \ref{cor:CLT_LCA}]
Note that Assumptions \ref{ass:CostsContinuousAndBounded} and \ref{ass:EquicontinuityCompact} are fulfilled for both settings. 
By Section 3.1, Lemma A.4 and Lemma A.3 in \cite{hundrieser2021convergence} for all three cases, respectively, it follows for $\epsilon>0$ sufficiently small that 
$$ 
\log \NC(\epsilon,\FC_{c},\Vert\cdot\Vert_\infty) \lesssim_{ \XC ,c}
\begin{cases}  \log\left(\left\lceil\epsilon^{-1}\right\rceil\right) & \text{ for \Cref{thm:Semidiscrete},}\\
 	\epsilon^{-s/\alpha} & \text{ for setting $(i)$ in \Cref{cor:CLT_LCA},}\\
	 \epsilon^{-s/2} & \text{ for setting $(ii)$ in \Cref{cor:CLT_LCA}.}
\end{cases}
$$
Moreover, by \citet[Lemma 2.1]{hundrieser2021convergence} it holds for any $\epsilon>0$ that 
$$\NC(\epsilon, \FC_c\powerC, \norm{\cdot}_\infty) = \NC(\epsilon, \FC_c\powerC, \norm{\cdot}_\infty),$$
 which implies that identical bounds on the uniform metric entropy of $\FC_c\powerC$ hold for all settings. Since the square root of the uniform metric entropy is for all settings integrable with respect to $\epsilon>0$ near zero, it follows by \citet[Theorem 2.5.2]{van1996weak} that both $\FC_c$ and $\FC_c\powerC$ are universal Donsker. We thus conclude from \Cref{thm:GeneralCLT} the CLTs for the empirical OT cost \eqref{eq:GeneralCLT} for arbitrary probability measures $\mu\in \PC(\XC),\nu\in \PC(\YC)$ .
\end{proof}

We emphasize that in contrast to previous CLTs from previous Subsections, no summability conditions are necessary in Theorems \ref{thm:Semidiscrete} and \ref{cor:CLT_LCA} for $\mu$ and $\nu$. 
Additionally, let us point out that \cite{hundrieser2021convergence} also provide uniform metric entropy bounds for $\FC_c$ under more general ground spaces $\XC$, such as metric spaces or parametrized surfaces with low intrinsic dimension, as well as $\alpha$-H\"older costs of smoothness degree $\alpha \in (1,2]$. As long as the integrability condition by \citet[Theorem 2.5.2]{van1996weak} for $\epsilon>0$ near zero is fulfilled, these bounds can also be employed for the derivation of CLTs of the empirical OT cost in even more general settings.

\begin{remark}[Wasserstein distance in high-dimensional spaces]\label{rem:lowerboundsandCLT}
As a consequence of Theorems \ref{thm:Semidiscrete} and \ref{cor:CLT_LCA}, we obtain CLTs for the empirical Wasserstein distance, i.e., for the Euclidean cost function $c(x,y) = \norm{x-y}^p$ for $p\geq 1$ even beyond $d \leq 3$ as long as both $\mu$ and $\nu$ have bounded support and one of them is sufficiently low dimensional. 
 More precisely, if $\mu$ is supported on a finite set or on an $s$-dimensional compact submanifold with $s=1$ and $p \geq 1$, or in case $s \in \{2,3\}$ and $p \geq 2$,
  our asymptotic results from~\eqref{eq:GeneralCLT} remain valid. Notably, for the latter case the asymptotic results still hold for any $p\in [1,2)$ if $\supp(\nu)$ is disjoint from $\supp(\mu)$. Indeed, in this setting one can extend the cost function $c|_{\Sigma}$ for $\Sigma=\supp(\mu)\times \supp(\nu)$ in a smooth manner to $\R^{2d}$, e.g., by the extension theorem of \cite{whitney1934analytic}, without altering the population and empirical OT cost. 
 
 In particular, under $s<d$ it follows by \Cref{cor:ConstantPotentials2} $(ii)$ that $S_c\powerC(\mu, \nu)$, i.e., the set of  Kantorovich potentials corresponding to $\nu$ is not trivial if $\supp(\nu)$ has non-empty interior. Hence, for $s$ sufficiently small when replacing the measure $\nu$ by its empirical measure $\hat \nu_m$ the limit laws do not degenerate. When instead replacing $\mu$ by $\hat \mu_n$, the limit law could indeed degenerate although this is rather the exception than the rule (see \Cref{thm:ConstantPotentials}). We recall \Cref{fig:ConstantPotentials}$(a)$, for an example where all Kantorovich potentials $S_c(\mu, \nu)$ are trivial, whereas $S_c\powerC(\mu, \nu)$ is not.
\end{remark}

\vspace{0.1cm}
\begin{ackno}
S. Hundrieser, M. Klatt, and T. Staudt gratefully acknowledge support from the DFG RTG 2088 and DFG EXC 2067/1- 390729940, and A. Munk of DFG CRC 1456. 
\end{ackno}

%\bibliographystyle{apalike2}
%\bibliography{biblio}

%\clearpage

\begin{appendix}

\section{Appendix}\label{sec:Proofs}

\subsection*{Omitted proofs for \Cref{sec:DegenerateLimitLaws}}

\setcounter{equation}{0}
\setcounter{theorem}{0}

\begin{proof}[Proof of \Cref{eq:ProjectedMeasureOT}]
We prove this equivalence under the assumption of a continuous cost function $c$ and a finite value $\OT_c(\mu, \nu) < \infty$. We set $h = \inf_{x'\in\supp(\mu)} c(x', \cdot)$, which
is upper semi-continuous as an infimum of continuous functions.

If $\mu\in P_c(\nu)$ with a corresponding coupling $\pi\in \Pi(\mu, \nu)$ in~\eqref{eq:ProjectedMeasure}, then it follows that
\begin{equation*}
    \OT_c(\mu, \nu)
    \le \int_{\XC\times\YC} c(x,y)\dif\pi(x, y)
    = \int_\YC h(y)\dif\nu(y)
    \le \OT_c(\mu, \nu),
\end{equation*}
where the final step relies on the fact that $h(y) \le c(x, y)$ for all $(x, y)\in\supp(\mu)\times\supp(\nu)$. In particular, $\pi$ is an OT plan.
Conversely, let the right hand side of \eqref{eq:ProjectedMeasureOT} be satisfied with an OT plan $\pi\in\Pi(\mu, \nu)$. Then
\begin{equation*}
    0
    \le
    \OT_c(\mu, \nu)
    =
    \int_{\XC\times\YC}c(x, y)\dif\pi(x, y)
    =
    \int_{\YC}h(y)\dif\pi(x, y) < \infty.
\end{equation*}
Since $h(y) \le c(x, y)$, this equation implies that equality of the integrands holds $\pi$-almost surely, and thus at least in a dense subset $A\subset\supp(\pi)$. For any $(x, y)\in\supp(\pi)$, choose a converging sequence $(x_n, y_n)_n\subset A$. Then, via continuity of $c$ and semi-continuity of $h$,
\begin{equation*}
  h(y) \le c(x, y)
  = \lim_{n\to\infty} c(x_n, y_n)
  = \lim_{n\to\infty} h(y_n)
  \le h(y),
\end{equation*}
establishing $h(y) = c(x, y)$ for all $(x, y)\in\supp(\pi)$. This implies $\mu\in P_c(\nu)$.
\end{proof}

\begin{proof}[Proof of \Cref{thm:ConstantPotentials}]
According to Lemma~3 in \cite{staudt2021uniqueness}, trivial Kantorovich potentials exist if and only if trivial Kantorovich potentials exist in the formally restricted problem where $\XC$ is replaced by the support of $\mu$ and $\YC$ is replaced by the support of $\nu$.
We can therefore assume $\XC = \supp(\mu)$ and $\YC = \supp(\nu)$.

Let $f$ be a trivial Kantorovich potential and $\pi\in\Pi(\mu, \nu)$ be an OT plan. By \Cref{lem:ContinuityOfKantPotentials}, the Kantorovich potential $f$ is continuous and we can assume that $f \equiv a$ on $\supp(\mu)$ for some $a\in\RR$. Then, for each $(x, y)\in\supp(\pi)$, it holds that $f^c(y) = c(x, y) - a$. Therefore,
\begin{equation*}
    c(x, y) = f^c(y) + a = \inf_{x'\in\supp(\mu)} c(x', y),
\end{equation*}
which establishes relation~\eqref{eq:ProjectedMeasure} and shows $\mu\in P_c(\nu)$.
Conversely, assume that \eqref{eq:ProjectedMeasure} holds for some $\pi\in\Pi(\mu, \nu)$.
Setting $f \equiv 0$, we observe that
\begin{equation*}
    f(x) + f^c(y) = \inf_{x'\in\supp(\mu)} c(x', y) = c(x, y)
    \qquad\text{for all}~(x,y)\in\supp(\pi).
\end{equation*}
According to standard OT theory \citep{santambrogio2015optimal}, this equality also holds if $f$ is replaced by the $c$-concave function $f^{cc} \ge f$, which implies that $f^{cc}\in S_c(\mu, \nu)$ is a Kantorovich potential (up to a suitable additive constant). Since $f^{cc}(x) = c(x, y) - f^c(y) = f(x) = 0$ on the set $\{x\,|\,(x,y)\in\supp(\pi)\}$, which has full $\mu$-measure, $f^{cc}$ is trivial.
\end{proof}

\begin{proof}[Proof of \Cref{cor:ConstantPotentials1}]
  Under condition $(i)$, we find that $\pi = (\id, \id)_{\#}\mu\in\Pi(\mu, \nu)$ satisfies \eqref{eq:ProjectedMeasure}, implying $\mu\in P_c(\nu)$. If condition $(ii)$ holds, the right hand side of \eqref{eq:ProjectedMeasureOT} can easily be established since $\pi = (p, \id)_{\#}\nu \in \Pi(\mu, \nu)$ is optimal. In both cases, the claim then follows by \Cref{thm:ConstantPotentials}.
\end{proof}

\begin{proof}[Proof of \Cref{cor:ConstantPotentials2}]
  Due to \Cref{thm:ConstantPotentials}, it is in all cases sufficient to show that $\mu\not\in P_c(\nu)$.
  Under condition $(i)$, we find $\OT_c(\mu, \nu) > 0$. At the same time,  $\supp(\nu)\subset\supp(\mu)$ implies that the equality on the right hand side of \eqref{eq:ProjectedMeasureOT} cannot hold. 

  We next note that $A_\pi \coloneqq \{x\,|\,(x, y)\in\supp(\pi)\}$ is dense in $\supp(\mu)$ for any $\pi\in\Pi(\mu, \nu)$.
  Under condition $(ii)$, the set $U \coloneqq \interior{\supp(\mu)} \setminus \supp(\nu)$ is non-empty and open, since $\supp(\nu)$ is closed.
  Therefore, we find $x\in U\cap A_\pi$ and $y\in\supp(\nu)$ with $(x,y)\in\supp(\pi)$. Let $u = x - y$. As $U$ is open, there exists $\epsilon > 0$ small enough that $x' \coloneqq x - \epsilon\,u\in U\subset\supp(\mu)$. Then, employing the strict monotonicity of~$h$, 
  \begin{equation*}
      c(x', y) = h(\|x' - y\|) = h\big((1-\epsilon)\|x-y\|\big) < h(\|x - y\|) = c(x,y)
  \end{equation*}
  for $(x,y)\in\supp(\pi)$ and $x' \in \supp(\mu)$, so $\mu\not\in P_c(\nu)$ follows by \Cref{def:projectedmeasure}.
  
  Under condition $(iii)$, let $U \coloneqq \supp(\mu) \setminus \Gamma_c(\mu, \nu)$. For any coupling $\pi\in\Pi(\mu, \nu)$, we use the density of $A_\pi$ in $\supp(\mu)$ to find $x\in U \cap A_\pi$, implying the existence of $y\in\supp(\nu)$ with $(x, y)\in\supp(\pi)$. We conclude $c(x, y) > \inf_{x'\in\supp(\mu)}c(x', y)$, since $x$ would otherwise be an element of $\Gamma_c(\mu, \nu)$. By \Cref{def:projectedmeasure}, $\mu\not\in P_c(\nu)$ follows.
 \end{proof}

\subsection*{Regularity of Kantorovich Potentials}

Assumptions imposed on the cost function $c\colon \XC\times \YC\to \mathbb{R}_+$ translate to properties on the function class $\FC_c$ and its $c$-conjugate $\FC_c^c$. A simple observation is uniform boundedness of $\FC_c$ if the cost is non-negative and bounded. Indeed, for any $f\in\FC_c$ it holds that
\begin{align*}
    -\norm{c}_\infty \leq \inf_{y\in\YC} c(x,y) -\norm{c}_\infty\leq f(x)\leq \inf_{y\in\YC} c(x,y) \leq \Vert c\Vert_\infty
\end{align*}
and analogously for any $f\in\FC_c^c$. We summarize additional findings in this regard in the following two statements (see also \cite{gangbo1996geometry,villani2008optimal,santambrogio2015optimal,staudt2021uniqueness} for further details).

\begin{lemma}[Continuity of Kantorovich potentials]\label{lem:ContinuityOfKantPotentials}
Consider Polish spaces $\XC$ and $\YC$ and a cost function $c\colon \XC\times \YC\rightarrow\R_+$ satisfying Assumption \ref{ass:CostsContinuousAndBounded} combined with \ref{ass:EquicontinuityLocallyCompact} or \ref{ass:EquicontinuityCompact}. Then, Kantorovich potentials $S_c(\mu,\nu)$ and $S_c\powerC(\mu, \nu)$ are continuous on the supports $\mu$ and $\nu$, respectively. 
\end{lemma}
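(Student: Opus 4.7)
The plan is to exploit the explicit representation of $c$-concave functions, $f(x) = \inf_{y \in \YC}\bigl(c(x, y) - g(y)\bigr)$ for a bounded $g$, and the symmetric formula $f\powerC(y) = \inf_{x \in \XC}\bigl(c(x, y) - f(x)\bigr)$, then to transfer continuity from $c$ to these pointwise infima. The underlying observation that makes everything work is that if a family of real-valued functions admits a common modulus of continuity $\omega$, then so does its pointwise infimum (wherever finite), since $|\inf_\alpha h_\alpha(x) - \inf_\alpha h_\alpha(x')| \leq \sup_\alpha |h_\alpha(x) - h_\alpha(x')|$.

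First I would show that every $f \in S_c(\mu, \nu) \subseteq \FC_c$ is continuous on all of $\XC$, which in particular yields continuity on $\supp(\mu)$. Both Assumption \ref{ass:EquicontinuityLocallyCompact} and Assumption \ref{ass:EquicontinuityCompact} guarantee that $\{c(\cdot, y) \mid y \in \YC\}$ is equicontinuous on $\XC$ with respect to some compatible metric $d$, hence admits a common modulus $\omega$. The shifted family $\{c(\cdot, y) - g(y) \mid y \in \YC\}$ shares the same modulus, and the infimum-preservation principle immediately gives $|f(x) - f(x')| \leq \omega(d(x, x'))$ for all $x, x' \in \XC$.

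The corresponding statement about $f\powerC$ on $\supp(\nu)$ splits by assumption. Under \ref{ass:EquicontinuityLocallyCompact} the argument is perfectly symmetric: the equicontinuity of $\{c(x, \cdot) \mid x \in \XC\}$ on $\YC$ transfers to $f\powerC$ by the same infimum argument. Under \ref{ass:EquicontinuityCompact}, however, no equicontinuity on $\YC$ is assumed, and this is where the main obstacle lies. Here I would instead combine compactness of $\XC$ with joint continuity of $c$ on $\XC \times \YC$: fixing $y_0 \in \YC$ and $\epsilon > 0$, each $x \in \XC$ admits an open product neighborhood $U_x \times V_x$ of $(x, y_0)$ on which $|c(\cdot, \cdot) - c(\cdot, y_0)| < \epsilon$. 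Extracting a finite subcover $U_{x_1}, \dots, U_{x_N}$ of the compact space $\XC$ and setting $V = \bigcap_{i \leq N} V_{x_i}$ yields a neighborhood of $y_0$ on which $\sup_{x \in \XC}|c(x, y) - c(x, y_0)| < \epsilon$. Passing to the infimum over $x$ in $f\powerC(y) - f\powerC(y_0)$ then gives $|f\powerC(y) - f\powerC(y_0)| < \epsilon$ for $y \in V$, which is pointwise continuity of $f\powerC$ at every $y_0 \in \YC$.

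The hard part is exactly this last step under \ref{ass:EquicontinuityCompact}, since $\YC$ need not be locally compact and no uniform behavior of $c(x, \cdot)$ in $x$ is assumed; the uniform continuity in $y$ that one gets for free under \ref{ass:EquicontinuityLocallyCompact} must be replaced by a pointwise-in-$y_0$ compactness argument on the $\XC$-side. All remaining content of the lemma reduces to the routine fact that pointwise infima of equicontinuous families inherit a common modulus of continuity.
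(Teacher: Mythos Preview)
Your proof is correct and follows the same core idea as the paper: both use that a pointwise infimum inherits a common modulus of continuity, which handles continuity of $f$ on $\XC$ under either assumption and of $f\powerC$ on $\YC$ under \ref{ass:EquicontinuityLocallyCompact}. The only difference is in the treatment of $f\powerC$ under \ref{ass:EquicontinuityCompact}: the paper invokes an external result (\citealt[Lemma~2]{staudt2021uniqueness}) to conclude continuity on $\supp(\nu)$, whereas you supply a direct finite-subcover argument showing that compactness of $\XC$ and joint continuity of $c$ force $y\mapsto c(\cdot,y)$ to be continuous into $(C(\XC),\|\cdot\|_\infty)$, which is exactly what that cited lemma provides. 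Your version is thus slightly more self-contained and in fact yields continuity of $f\powerC$ on all of $\YC$, not only on $\supp(\nu)$.
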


\begin{proof}
Equicontinuity of the partially evaluated costs implies continuity of Kantorovich potentials since the modulus of continuity of a function class is preserved under pointwise infima or suprema (see \citealt[Section 1.2]{santambrogio2015optimal}, for details). This implies continuity of Kantorovich potentials on both $\XC$ and $\YC$ under Assumption \ref{ass:EquicontinuityLocallyCompact}, whereas under Assumption \ref{ass:EquicontinuityCompact} this only implies continuity on $\XC$. Moreover, under Assumption \ref{ass:EquicontinuityCompact} the space $\XC$ is compact, hence continuity of the costs asserts by \citet[Lemma 2]{staudt2021uniqueness} continuity of Kantorovich potentials on the support of $\nu$.
\end{proof}

As particular instances where structural properties of the cost function are inherited to  Kantorovich potentials, we focus on the setting of H\"older smoothness and semi-concavity.

\begin{lemma}\label{lem:OTpotentials}
Let $\XC$ and $\YC$ be normed spaces. 
\begin{itemize}
    \item[(i)] If for some $\alpha\in(0,1]$ and $L>0$ the cost $c(\cdot,y)$ is $(\alpha,L)$-H\"older as a function in $x$ for all $y\in\YC$, then any $f \in\FC_{c}$ is $(\alpha,L)$-H\"older continuous. 
    \item[(ii)] If for some $\lambda>0$ the cost $c(\cdot,y)$ is $\Lambda$-semi-concave as a function in $x$ for all $y\in\YC$, then any $f\in\FC_{c}$ is $\lambda$-semi-concave.
\end{itemize}
\end{lemma}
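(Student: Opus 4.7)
The plan is to use the explicit representation from \eqref{eq:OTFunctionclassLarge}: every $f\in\FC_c$ has the form $f(x) = \inf_{y\in\YC}\bigl(c(x,y) - g(y)\bigr)$ for some bounded $g\colon\YC\to\R$. Both claims then reduce to checking that the relevant regularity is preserved under taking a pointwise infimum over $y$ of the family of ``slices'' $x\mapsto c(x,y)-g(y)$, because $g(y)$ only contributes a constant (in $x$) shift for each fixed $y$.

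For part $(i)$, I would invoke the standard fact that a uniform modulus of continuity passes to the infimum. Concretely, for arbitrary $x,x'\in\XC$ and every $y\in\YC$ the Hölder hypothesis gives
\begin{equation*}
c(x,y)-g(y) \;\le\; c(x',y)-g(y) + L\|x-x'\|^{\alpha},
\end{equation*}
and taking the infimum over $y\in\YC$ on both sides yields $f(x)\le f(x')+L\|x-x'\|^{\alpha}$. Swapping the roles of $x$ and $x'$ gives the reverse inequality, so $f$ is $(\alpha,L)$-Hölder.

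For part $(ii)$, I would rewrite
\begin{equation*}
f(x)-\Lambda\|x\|_{2}^{2} \;=\; \inf_{y\in\YC}\bigl(c(x,y)-\Lambda\|x\|_{2}^{2}-g(y)\bigr).
\end{equation*}
By assumption each $x\mapsto c(x,y)-\Lambda\|x\|_{2}^{2}$ is concave, and subtracting the $x$-constant $g(y)$ preserves concavity. A pointwise infimum of a family of concave functions is concave, so $f-\Lambda\|\cdot\|_{2}^{2}$ is concave, which is the definition of $\Lambda$-semi-concavity of $f$ (the $\lambda$ vs.\ $\Lambda$ in the statement is a harmless notational slip).

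There is no real obstacle here: both parts are one-line consequences of the ``infimum of a shifted family'' representation of $c$-concave functions, together with the elementary preservation properties of Hölder moduli and of concavity under infima. The only point to be careful about is that the dependence on $y$ enters $f$ only through the additive term $g(y)$, which is $x$-independent and therefore invisible to both the Hölder estimate in $x$ and the concavity-in-$x$ argument.
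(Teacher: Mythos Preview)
Your proof is correct and follows essentially the same approach as the paper: both parts exploit the representation $f(x)=\inf_{y}\bigl(c(x,y)-g(y)\bigr)$ and the fact that H\"older moduli and concavity are preserved under pointwise infima. The only cosmetic difference is that for part $(ii)$ the paper writes out the convex-combination inequality explicitly rather than invoking ``infimum of concave functions is concave'' as a known fact.
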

 
Note that reversing the roles of $x$ and $y$ leads to analogous results for the $c$-conjugate function class $\FC_c^c$ if $\{c(x,\cdot)\mid x\in \XC\}$ is uniformly H\"older smooth or semi-concave.

\begin{proof}
Suppose $c(\cdot,y)$ is $(\alpha,L)$-H\"older continuous for any $y\in\YC$. Then, it follows that
\begin{equation*}
   f(x)=\inf_{y^\prime\in\YC} c(x,y^\prime)-g(y^\prime) \leq c(x,y)-g(y)\leq c(x^\prime,y)-g(y)+L\Vert x-x^\prime\Vert^\alpha.
\end{equation*}
Taking the infimum on the right hand side with respect to $y$ yields
\begin{equation*}
    f(x)\leq f(x^\prime)+L\Vert x-x^\prime\Vert^\alpha.
\end{equation*}
Changing the roles of $x$ and $x^\prime$ proves that $\vert f(x)-f(x^\prime)\vert \leq L\Vert x-x\Vert^\alpha$ for any $f\in\FC_c$. Suppose that $c(\cdot,y)$ is $\Lambda$-semi-concave, i.e., $c(\cdot,y)-\Lambda\Vert \cdot \Vert^2$ is concave for all $y\in\YC$ as a function of $x$. It then follows that
\begin{align*}
    &f(tx+(1-t)x^\prime)-\Lambda\Vert tx+(1-t)x^\prime\Vert^2\\
    &\geq t\left(\inf_{y\in\YC} c(x,y)-\Lambda\Vert x \Vert^2-g(y)\right) + (1-t) \left( \inf_{y\in\YC} c(x^\prime,y)-\Lambda\Vert x^\prime \Vert^2-g(y) \right)\\
    &=t\left(f(x)-\lambda\Vert x\Vert^2\right) +  (1-t) \left( f(x^\prime)-\Lambda\Vert x^\prime\Vert^2\right).
\end{align*}
Hence, any $f\in\FC_c$ is itself a $\Lambda$-semi-concave function.
\end{proof}

\end{appendix}

\end{document}